\newcommand{\essinf}{\operatorname{essinf}}
\renewcommand{\div}{\operatorname{div}}
\newcommand{\Nn}{{\mathbb{N}}}
\newcommand{\Tt}{{\mathbb{T}}}
\newcommand{\RR}{{\mathbb R}}
\newcommand{\epsi}{\varepsilon}
\renewcommand{\epsilon}{\varepsilon} %this is cheating by DG
\def\leq{\leqslant}
\def\geq{\geqslant}
\numberwithin{equation}{section}
\newtheoremstyle{thmlemcorr}{10pt}{10pt}{\itshape}{}{\bfseries}{.}{10pt}{{\thmname{#1}\thmnumber{
                        #2}\thmnote{ (#3)}}}
\newtheoremstyle{thmlemcorr*}{10pt}{10pt}{\itshape}{}{\bfseries}{.}\newline{{\thmname{#1}\thmnumber{
\newtheoremstyle{defi}{10pt}{10pt}{\itshape}{}{\bfseries}{.}{10pt}{{\thmname{#1}\thmnumber{
                        #2}\thmnote{ (#3)}}}
\newtheoremstyle{remexample}{10pt}{10pt}{}{}{\bfseries}{.}{10pt}{{\thmname{#1}\thmnumber{
                        #2}\thmnote{ (#3)}}}
\newtheoremstyle{remexample*}{10pt}{10pt}{}{}{\bfseries}{.}{10pt}{{\thmname{#1}\thmnote{ (#3)}}}
\newtheoremstyle{ass}{10pt}{10pt}{}{}{\bfseries}{.}{10pt}{{\thmname{#1}\thmnumber{
                        A#2}\thmnote{ (#3)}}}
\theoremstyle{thmlemcorr}
\newtheorem{theorem}{Theorem}
\numberwithin{theorem}{section}
\newtheorem{lemma}[theorem]{Lemma}
\newtheorem{proposition}[theorem]{Proposition}
\newtheorem{assumption}[theorem]{Assumption}
\theoremstyle{thmlemcorr*}
\newtheorem{theorem*}{Theorem}
\newtheorem{lemma*}[theorem]{Lemma}
\newtheorem{corollary*}[theorem]{Corollary}
\newtheorem{proposition*}[theorem]{Proposition}
\newtheorem{problem*}[theorem]{Problem}
\newtheorem{conjecture*}[theorem]{Conjecture}
\theoremstyle{defi}
\newtheorem{definition}[theorem]{Definition}
\newtheorem{problem}{Problem}
\theoremstyle{remexample}
\newtheorem{remark}[theorem]{Remark}
\providecommand{\varitem}{} %to keep LaTeX %quiet
\newenvironment{hypotheses}[1]
 {\renewcommand\varitem[1]{\item[
 \textbf{#1\arabic{enumi}\rlap{$##1$}.}]%
    \edef\@currentlabel{#1\arabic{enumi}{$##1$}}}%
  \enumerate[label=\bf{({#1\arabic*})}, ref=#1\arabic*]}
 {\endenumerate}
\theoremstyle{remexample*}
\newtheorem{example*}{Example}
\theoremstyle{ass}
\begin{document}
        
        \title[Mean-Field Games with Monotonicity Methods]{Solving Mean-Field Games with Monotonicity Methods in Banach Spaces}

\author{Rita Ferreira}
\address[R. Ferreira]{
        King Abdullah University of Science and Technology (KAUST),
        CEMSE Division, Thuwal 23955-6900, Saudi Arabia.}
\email{rita.ferreira@kaust.edu.sa}
\author{Diogo Gomes}
\address[D. Gomes]{
        King Abdullah University of Science and Technology (KAUST),
        CEMSE Division, Thuwal 23955-6900, Saudi Arabia.}
\email{diogo.gomes@kaust.edu.sa}
\author{Melih \"U\c{c}er}
\address[M. \"U\c{c}er]{
 King Abdullah University of Science and Technology (KAUST),
        CEMSE Division, Thuwal 23955-6900, Saudi Arabia \& 
 Ankara Yildirim Beyazit University (AYBU),
        Ankara, Turkey.
        }
\email{melih.ucer@kaust.edu.sa}

\keywords{Mean-field games; weak solutions; . }
\subjclass[2010]{
                35J47, % Second-order elliptic systems
                35A01} %Existence problems: global existence, local existence, non-existence
        
\thanks{Funds. }
\date{\today}

\begin{abstract}
This paper develops a unified framework for proving the existence of solutions to stationary first-order mean-field games (MFGs) based on the theory of monotone operators in Banach spaces. We cast the coupled MFG system as a variational inequality, 
overcoming the limitations of prior Hilbert-space approaches that relied on high-order regularization and typically yielded only weak solutions in the monotone operator sense. In contrast, with our low-order regularization, we obtain strong solutions.

Our approach addresses the non-coercivity of the underlying MFG operator through two key regularization strategies. First, by adding a low-order $p$-Laplacian term, we restore coercivity, derive uniform a priori estimates, and pass to the limit via Minty's method.
This establishes, for the first time via monotonicity methods, the existence of \emph{strong solutions} for models with both standard power-growth and singular congestion, with the latter requiring a careful restriction of the operator's domain. Second, for Hamiltonians with only minimal growth hypotheses, we regularize the Hamiltonian itself via infimal convolution to prove the existence of \emph{weak solutions}.  

Our Banach-space framework unifies and extends earlier existence results. By avoiding high-order smoothing, it not only provides a more direct theoretical path but is also ideally suited for modern numerical algorithms.
\end{abstract}

\maketitle
 
\section{Introduction}

Mean-Field Games (MFG) theory, introduced by Lasry and Lions \cite{ll1,ll2} and by Caines, Huang, and Malham\'{e} \cite{Caines2}, has become an indispensable tool for modeling strategic interactions in large populations of anonymous rational agents. With applications ranging from modeling price formation in financial markets to steering pedestrian flows in urban planning, one of the theory's central challenges lies in proving the existence of an equilibrium. Our approach relies on \emph{monotonicity methods}, which exploit the structure of the underlying equations---specifically, the property of an operator $A$ satisfying $\langle A(x)-A(y), x-y \rangle \geq 0$---to guarantee that such equilibria exist.

The equilibrium of a stationary MFG is described by a coupled system of nonlinear partial differential equations (PDEs), typically a Hamilton--Jacobi equation and a transport or Fokker--Planck equation. Historically, research on this system has advanced along two main threads: the existence of classical (smooth) solutions \cite{GM, MR3113415, GPatVrt, GPM2, GPM3, Gomes2015b, Gomes2016c, PV15, cirant3, CirGo20} and the existence of weak solutions in Sobolev spaces, often obtained through variational methods \cite{porretta, porretta2, bocorsporr, cgbt, cardaliaguetMeanFieldGames2014, MR3691806, AMFS}.

The analysis is largely shaped by the presence or absence of a \emph{monotonicity} condition. While some results exist for non-monotone games, they are generally local in nature, relying on contraction arguments valid only over short time intervals or on other smallness conditions \cite{ciranttonon,MR4064664, Ambrose}. In contrast, most of the literature assumes a monotonicity structure. This property was not only central to the original uniqueness proofs of Lasry and Lions \cite{lasryMeanFieldGames2007a} but has also inspired successful numerical schemes \cite{almulla2017two, GJS2}.
{However, beyond its role in uniqueness, monotonicity has also been used to build general existence results \cite{FG2, FGT1, FeGoTa21} and to define and study monotone solutions for the MFG master equation \cite{bertucciMonotoneSolutionsMean2023}. In the present work, our Banach-space methods are used only to establish existence; uniqueness requires a separate analysis and is not pursued here.}

Despite these advances, existence proofs based on monotonicity methods \cite{FG2, FGT1, FeGoTa21} face notable limitations. They are typically set in Hilbert spaces (e.g., $H^{2k} \times H^{2k}$), necessitating the addition of high-order regularizing operators (such as $(\cdot\, + \Delta^{2k} \cdot)$ for  $k\in\Nn$ sufficiently large) to ensure coercivity and continuous embeddings into $L^\infty$. These high-order perturbations can be technically cumbersome and   fundamentally limit the regularity analysis: the limit equations hold only in the duals of high-order Sobolev spaces, yielding only weak solutions in the abstract sense of monotone operators (see Definition~\ref{def:weak}), in contrast to the strong pointwise solutions known to exist for variational problems \cite{cardaliaguetMeanFieldGames2014}. 

This reveals a significant gap: a unified Banach space framework that matches the integrability of the function spaces directly to the Hamiltonian's growth, 
{
thereby avoiding high-order smoothing and yielding strong solutions for power-growth and singular-congestion models, while still covering minimal-growth Hamiltonians at the weak-solution level.
}
Such a first-order framework is not only theoretically cleaner but also highly advantageous for computation. The low-order $p$-Laplacian regularizations we introduce preserve the local structure of the PDEs, making them ideally suited for the design and implementation of modern monotone inclusion and splitting algorithms \cite{nurbekyanNoteConvergenceMonotone2023, nurbekyanMonotoneInclusionMethods2024}.

This paper develops a unified framework rooted in the theory of monotone operators in Banach spaces, addressing these challenges. Our approach, which avoids the high-order regularizations found in prior works, is built on three key technical innovations. First, we recast the entire MFG system as a variational inequality for a single monotone operator. Second, we introduce a low-order $p$-Laplacian regularization; this technique is not only critical to ensuring coercivity and obtaining strong solutions for a broad class of problems, but its structure is also suitable for numerical methods. Third, for models under only minimal growth conditions, we employ infimal convolution to regularize the Hamiltonian itself, allowing us to establish the existence of weak solutions where other methods may not apply.

As for the concrete applications, we primarily study the existence of solutions to the following first-order, local, stationary mean-field game with periodic boundary conditions; that is, we address the study of \eqref{smfg} in the $d$-dimensional torus $\Tt^d$, $d\in\Nn$.
\begin{problem}\label{prob.smfg}
    Given a Hamiltonian $H\colon \Tt^d\times \RR^d \times \RR^+ \to \RR$ and a potential $V\colon \Tt^d\to \RR$, establish existence of solutions $(m,u)$ with $m\colon\Tt^d\to \RR^+_0$ and $u\colon\Tt^d\to \RR$ to the system
    \begin{equation}\label{smfg}
        \begin{aligned}
                \begin{cases}
                        -u- H(x,Du,m)+ V(x) =0,\\
                        m - \div(m D_pH(x, Du,m)) -1 =0. 
                \end{cases}
        \end{aligned}
    \end{equation}
\end{problem}
    The presence of the term $-u$ in the Hamilton--Jacobi equation corresponds to a strictly positive discount rate in the agents' optimal control problem, which is a standard formulation for discounted stationary mean-field games (see \cite{FGT1, FGTchapter2026} or \cite{MR4175148},  for instance).

Our key structural assumption on the Hamiltonian~$H$ is a certain monotonicity criterion (Assumption~\ref{onH.monotone}) that enables us to use well-established monotone operator methods. Each theorem in this paper requires this assumption, in addition to certain growth and coercivity conditions that vary between the theorems. These theorems state existence of weak and strong solutions as defined next, and further discussed in Subsection~\ref{subs:solutions}. Note that merely defining the solutions require a certain minimal amount of regularity on the data, which we make precise in Subsection~\ref{subs:assumptions} and assume whenever needed.

\begin{definition}[Strong solution]\label{def:strong} Suppose that $H$ satisfies the regularity conditions (\ref{a:regularityH}), (\ref{a:Hat0}), and (\ref{a:mDpHat0}) and that $V$ is measurable. We say that $(m,u)\in L^1(\Tt^d)\times W^{1,1}(\Tt^d)$ is a \emph{strong solution} to Problem~\ref{prob.smfg} if $m\geq 0$, $mD_pH(x,Du,m)\in L^1(\Tt^d; \RR^d)$, and the two equations in \eqref{smfg} hold in the following sense:
\begin{enumerate}
    \item The Hamilton--Jacobi equation holds in the sense that
    \begin{equation}\label{def:hjb-strong}
        \begin{aligned}
        & u + H(x,Du,m) - V(x) \leq 0 \qquad \text{a.e.~in } \Tt^d, \\
            & u + H(x,Du,m) - V(x) = 0 \qquad \text{a.e.~in the set } \{m>0\}.
        \end{aligned}
    \end{equation}
    \item The transport equation holds in the sense that
    \begin{equation}\label{def:transport-strong}
        \int_{\Tt^d} \big[(m-1)\varphi + mD_pH(x,Du,m)\cdot D\varphi\big] \,dx = 0
    \end{equation}
    for all $\varphi\in C^\infty(\Tt^d)$.
\end{enumerate}
Note that, in the set $\{m=0\}$, the expressions $H(x,Du,m)$ and $mD_pH(x,Du,m)$ must be evaluated in the sense of (\ref{a:Hat0}) and (\ref{a:mDpHat0}).  
\end{definition}
\begin{remark}\label{rmk:onDefStrong}
We note that the Hamilton--Jacobi equation in the preceding definition is written  with signs reversed compared to \eqref{smfg}; this mathematically equivalent formulation allows for a more direct comparison with the standard viscosity solutions literature for Hamilton–Jacobi (HJ) equations.

 We further observe that the regularity required for strong solutions constitutes the minimal functional framework necessary for the definition.  However, our main theorems establish existence of such solutions in spaces with stronger integrability. 
\end{remark}

\begin{definition}[Weak Solution]\label{def:weak}
Suppose that $H$ satisfies Assumption~\ref{onH.monotone} and that $V$ is measurable. We say that $(m,u)\in L^1(\Tt^d)\times W^{1,1}(\Tt^d)$ is a \emph{weak solution} to Problem~\ref{prob.smfg} if $m\geq 0$ and
\begin{equation}\label{def:weak-solution-concept}
\begin{aligned}
0 \leq& \int_{\Tt^d}
\Big[ \big(  -\upsilon- H(x, D\upsilon,\mu) + V(x)  \big)(\mu
 -m)\\
&\qquad\quad+ \mu D_p H(x, D\upsilon,\mu) \cdot(D\upsilon - Du)  + (\mu-1)( \upsilon-u) \Big]
\,dx
\end{aligned}
\end{equation}
% \begin{equation}
    %     \begin{aligned}
    %         \int_{\Tt^d} \big[(\mu-1)(\upsilon-u) + \mu D_pH(x,D\upsilon,\mu)\cdot (D\upsilon-Du)\big]\, \dx \geq \qquad\quad\\
    %         \int_{\Tt^d} (\upsilon + H(x,D\upsilon,\mu) - V(x))(\mu-m) \, \dx
    %     \end{aligned}
    % \end{equation}
for all $(\mu,\upsilon)\in L^\infty(\Tt^d)\times W^{1,\infty}(\Tt^d)$ with $\essinf\mu > 0$.
\end{definition}

Our framework yields the following three main existence theorems, presented here in
order of increasing technical difficulty. These results all rely on the standing monotonicity condition, Assumption~\ref{onH.monotone}, and on a corresponding growth assumption from a hierarchy of progressively weaker conditions detailed in Subsection~\ref{subs:assumptions}.

First, for models with standard power-growth, we prove the existence of strong solutions (Theorem~\ref{thm:main-simple.power.growth}, proved in Section \ref{sec:proof-power-growth}). The central challenge, which our method is designed to overcome, is the lack of coercivity of the underlying MFG operator.

\begin{theorem}\label{thm:main-simple.power.growth}
        Assume that $H:\Tt^d\times \RR^d \times \RR^+ \to \RR$ satisfies Assumption~\ref{onH.monotone} and Assumption~\ref{onH.powergrowth}. Let $\bar\beta$ and $\bar\gamma$ be given by \eqref{eq:gamma}, and assume that $V\in L^{\bar\beta'}(\Tt^d)$.  Then, there exists a strong solution $(m,u)\in L^{\bar \beta}(\Tt^d) \times W^{1,\bar\gamma}(\Tt^d)$ to Problem~\ref{prob.smfg} in the sense of Definition~\ref{def:strong}.%with $m\geq0$        
\end{theorem}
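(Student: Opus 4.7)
The plan is to reformulate Problem~\ref{prob.smfg} as a monotone variational inequality for a single operator $A$ on pairs $(m,u)$, regularize by a low-order $\bar\gamma$-Laplacian term to restore coercivity, solve the regularized problem via Browder--Minty, pass to the limit using Minty's trick to recover the variational inequality \eqref{def:weak-solution-concept}, and finally upgrade the weak solution to a strong one via careful perturbation of the test pairs. Setting
\begin{equation*}
A(m,u) := \bigl(-u - H(x,Du,m) + V(x),\; m - \div(m D_pH(x,Du,m)) - 1\bigr),
\end{equation*}
so that \eqref{smfg} is exactly $A(m,u)=0$, the duality pairing $\langle (f,g), (m,u)\rangle = \int_{\Tt^d}(fm + gu)\,dx$ on the cone $\{m\geq 0\}$ of $L^{\bar\beta}(\Tt^d)\times W^{1,\bar\gamma}(\Tt^d)$ gives, after integrating the divergence by parts, precisely the bilinear form appearing in \eqref{def:weak-solution-concept}. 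The linear cross-terms between $m$ and $u$ cancel, and the remaining piece is exactly the Lasry--Lions-type monotonicity quantity rendered nonnegative by Assumption~\ref{onH.monotone}. The obstruction--for which this whole method is designed--is that $A$ fails to be coercive.

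Second, I would add the low-order regularization $-\epsilon\div(|Du|^{\bar\gamma-2}Du) + \epsilon|u|^{\bar\gamma-2}u$ to the HJB equation and $\epsilon|m|^{\bar\beta-2}m$ to the transport equation. The resulting operator $A_\epsilon$ is monotone, bounded, hemicontinuous, and now coercive on $W^{1,\bar\gamma}(\Tt^d)\times L^{\bar\beta}(\Tt^d)$. The Browder--Minty surjectivity theorem (combined with an obstacle-type variational inequality enforcing $m_\epsilon \geq 0$) yields a regularized solution $(m_\epsilon, u_\epsilon)$. Testing the regularized system against the canonical adjoint pair $(m_\epsilon - 1, u_\epsilon)$ and using Assumption~\ref{onH.powergrowth} together with $V \in L^{\bar\beta'}(\Tt^d)$, the transport-structure cross-terms cancel and produce $\epsilon$-uniform bounds
\begin{equation*}
\|m_\epsilon\|_{L^{\bar\beta}(\Tt^d)} + \|u_\epsilon\|_{W^{1,\bar\gamma}(\Tt^d)} + \|m_\epsilon D_pH(\cdot, Du_\epsilon, m_\epsilon)\|_{L^1(\Tt^d;\RR^d)} \leq C.
\end{equation*}

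Third, a subsequence of $(m_\epsilon, u_\epsilon)$ converges weakly to some $(m,u)\in L^{\bar\beta}(\Tt^d) \times W^{1,\bar\gamma}(\Tt^d)$, and the regularization terms vanish when paired with bounded test pairs. The monotonicity of $A_\epsilon$ gives the variational inequality \eqref{def:weak-solution-concept} with $(m,u)$ replaced by $(m_\epsilon, u_\epsilon)$; since $(m_\epsilon, u_\epsilon)$ enters this inequality only \emph{linearly}, weak convergence is enough to pass to the limit and recover \eqref{def:weak-solution-concept} for $(m,u)$. This is Minty's trick: one identifies the limit via monotonicity against fixed smooth test pairs, bypassing altogether the need to pass the nonlinearities $H(x, Du_\epsilon, m_\epsilon)$ and $m_\epsilon D_pH(x, Du_\epsilon, m_\epsilon)$ to the limit pointwise.

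The main obstacle, and the heart of the argument, is the final upgrade from weak to strong solution. The variational inequality \eqref{def:weak-solution-concept} is a one-sided condition over smooth pairs with $\essinf \mu > 0$, whereas Definition~\ref{def:strong} demands the pointwise HJB inequality, equality on $\{m>0\}$, and the integral transport identity \eqref{def:transport-strong}. The device is to substitute $(\mu,\upsilon) = (m_\delta + s\psi, u + s\varphi)$ in \eqref{def:weak-solution-concept}, where $m_\delta$ is a mollification of $m$ shifted up by a small constant to ensure $\essinf \mu > 0$, divide by $s>0$, and let $s \to 0^+$: taking $\psi\equiv 0$ with $\varphi\in C^\infty(\Tt^d)$ arbitrary yields \eqref{def:transport-strong}; taking $\varphi\equiv 0$ with $\psi\geq 0$ smooth yields the HJB inequality in \eqref{def:hjb-strong}; and perturbing with $\psi$ of either sign but supported in $\{m\geq \delta\}$ gives equality there, whence on $\{m>0\}$ after $\delta\to 0^+$. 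The delicate analytical points are justifying the limits inside the nonlinear terms (which requires the uniform integrability supplied by the power growth in Assumption~\ref{onH.powergrowth}), controlling the mollification error, and interpreting $H$ and $mD_pH$ on $\{m=0\}$ via the conventions (\ref{a:Hat0}) and (\ref{a:mDpHat0}).
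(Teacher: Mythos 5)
Your overall strategy is the paper's: recast \eqref{smfg} as a variational inequality for a single monotone operator in the crossed duality (HJB residual paired with the $m$-direction, transport residual with the $u$-direction), regularize by a low-order $\bar\gamma$-Laplacian to restore coercivity, solve the regularized inequality, derive $\epsi$-uniform estimates, pass to the limit exploiting linearity of $\langle A_\epsi[\mu,\upsilon],(\mu,\upsilon)-(m_\epsi,u_\epsi)\rangle$ in the solution, and upgrade to a strong solution by judicious test pairs. However, there is a concrete error in your regularization step. With the pairing you yourself set up, a perturbation added to the \emph{HJB component} is tested against $\mu-m\in L^{\bar\beta}(\Tt^d)$, and one added to the \emph{transport component} is tested against $\upsilon-u\in W^{1,\bar\gamma}(\Tt^d)$. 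Your terms are placed the wrong way around: $-\epsi\div(|Du|^{\bar\gamma-2}Du)+\epsi|u|^{\bar\gamma-2}u$ in the HJB slot is not even an element of $L^{\bar\beta'}(\Tt^d)$ (it lives in $W^{-1,\bar\gamma'}$, so it cannot be paired with $\mu-m$), and, more importantly, a $u$-dependent term tested against $m$-differences and an $m$-dependent term ($\epsi|m|^{\bar\beta-2}m$) tested against $u$-differences have no sign, so $A_\epsi$ as you define it is \emph{not} monotone and the coercivity computation against $(m_\epsi-1,u_\epsi)$ does not produce $\epsi\|u_\epsi\|^{\bar\gamma}_{W^{1,\bar\gamma}}$. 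The paper's operator \eqref{defAepsi} puts the $\bar\gamma$-Laplacian terms in the transport component (they are the G\^ateaux derivative of $u\mapsto\tfrac1{\bar\gamma}\|u\|^{\bar\gamma}_{W^{1,\bar\gamma}}$ paired with the $u$-direction, hence monotone), and needs no $m$-regularization at all: the $m$-coercivity comes from the Lagrangian bound \eqref{eq:DpHdotpminusH-estimate1} together with \eqref{eq:assH.lower.simple}, as in \eqref{eq:coercive.Aepsi}.

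Two further points need repair even after fixing the placement. First, testing the regularized system against $(m_\epsi-1,u_\epsi)$ alone only yields a bound on $\|m_\epsi\|_{L^{\bar\beta}}$ plus $\epsi\|u_\epsi\|^{\bar\gamma}_{W^{1,\bar\gamma}}$, which degenerates as $\epsi\to0$; the uniform $W^{1,\bar\gamma}$ bound on $u_\epsi$ in Theorem~\ref{thm:apriori-simple.power.growth} is obtained differently, namely by first showing (as in Lemma~\ref{lem:strong.is.strong}, via the test choices $\mu=m_\epsi+\varphi$, $\mu=m_\epsi(1\pm\varphi/\|\varphi\|_\infty)$, $\upsilon=u_\epsi\pm\varphi$) that $(m_\epsi,u_\epsi)$ satisfies the pointwise HJB inequality and equality on $\{m_\epsi>0\}$, and then combining $u_\epsi+H(x,Du_\epsi,m_\epsi)\leq V$ with the coercivity \eqref{eq:assH.lower.simple} and Poincar\'e--Wirtinger; your sketch omits this mechanism. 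Second, in your final upgrade step you substitute test pairs of the form $(\,\cdot\,,u+s\varphi)$ with $u\in W^{1,\bar\gamma}(\Tt^d)$, but you only carried the inequality \eqref{def:weak-solution-concept} to the limit for the restricted class of Definition~\ref{def:weak} ($L^\infty\times W^{1,\infty}$ with $\essinf\mu>0$), in which such pairs are not admissible. Under Assumption~\ref{onH.powergrowth} the limit inequality in fact holds for all $(\mu,\upsilon)\in L^{\bar\beta}(\Tt^d;\RR^+_0)\times W^{1,\bar\gamma}(\Tt^d)$, and then hemicontinuity of $A_0$ (Remark~\ref{rmk:hemicontinuous-upgrade}) gives the strong variational inequality at $(m,u)$, after which the strong-solution properties follow exactly as in Lemma~\ref{lem:strong.is.strong} with $\epsi=0$; you should state and use this larger test class, otherwise your perturbation argument has no inequality to start from.
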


We then extend this result to models with singular congestion terms, again establishing the existence of strong solutions (Theorem~\ref{thm:main-congestion.growth}, proved in Section \ref{sec:proof-congestion}). The main difficulty in this setting is that the operator is not even well-defined on the standard function spaces; the proof hinges on a careful and highly non-trivial construction of a restricted convex domain.

\begin{theorem}\label{thm:main-congestion.growth}
Assume that $H:\Tt^d\times \RR^d \times \RR^+ \to \RR$ satisfies Assumption~\ref{onH.monotone} and Assumption~\ref{onH.congestiongrowth}. Let $\bar\beta$ and $\bar\gamma$ be given by \eqref{eq:gamma}, and assume that $V\in L^{\bar\beta'}(\Tt^d)$.  Then, there exists a strong solution $(m,u)\in L^{\bar \beta}(\Tt^d) \times W^{1,\bar\gamma}(\Tt^d)$ to Problem~\ref{prob.smfg} in the sense of Definition~\ref{def:strong}.
\end{theorem}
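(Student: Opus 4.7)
The overall strategy mirrors the proof of Theorem~\ref{thm:main-simple.power.growth}: we reformulate the MFG system as a variational inequality for a monotone operator, regularize via a low-order $p$-Laplacian term to restore coercivity, derive uniform a priori estimates, and pass to the limit by Minty's method. The new difficulty, flagged in the introduction and in the statement itself, is that Assumption~\ref{onH.congestiongrowth} permits $H(x,p,m)$ and $mD_pH(x,p,m)$ to blow up as $m\to 0^+$, so the associated operator is not even finite on the natural nonnegative cone. The remedy is to restrict the operator to a carefully chosen convex subset on which the singular terms are integrable, while preserving all the structural properties (monotonicity, hemicontinuity, coercivity) needed for the solvability of the regularized variational inequality.

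Concretely, for each $\delta\in(0,1)$ I would work on the closed convex set
\[
K_\delta = \bigl\{(\mu,\upsilon)\in L^{\bar\beta}(\Tt^d)\times W^{1,\bar\gamma}(\Tt^d) : \mu\geq \delta \text{ a.e.}\bigr\},
\]
and solve the variational inequality on $K_\delta$ for the MFG operator perturbed by $\epsilon$ times a $\bar\gamma$-Laplacian term in the Hamilton--Jacobi equation. On $K_\delta$, Assumption~\ref{onH.congestiongrowth} ensures that $H(\cdot,D\upsilon,\mu)$ and $\mu D_pH(\cdot,D\upsilon,\mu)$ belong to the appropriate Lebesgue spaces, Assumption~\ref{onH.monotone} bequeaths monotonicity to the operator, and the $p$-Laplacian term provides coercivity in the $W^{1,\bar\gamma}$ norm. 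The standard existence theorem for variational inequalities with monotone hemicontinuous coercive operators on closed convex sets (e.g., Lions--Stampacchia) then produces an approximate solution $(m_{\epsilon,\delta}, u_{\epsilon,\delta})\in K_\delta$.

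The next task is to derive a priori estimates independent of both $\epsilon$ and $\delta$. Testing with admissible pairs of the form $(\mu,\upsilon)=(1+\eta,0)$ for $\eta$ small and using the growth hypothesis in Assumption~\ref{onH.congestiongrowth} yields the $L^{\bar\beta}$ bound on $m_{\epsilon,\delta}$; testing with $(\mu,\upsilon)=(\delta,0)$ combined with the monotonicity inequality and the coercivity of $H$ in $p$ yields the $W^{1,\bar\gamma}$ bound on $u_{\epsilon,\delta}$; and Assumption~\ref{onH.congestiongrowth} provides uniform integrability of $m_{\epsilon,\delta} D_pH(x, Du_{\epsilon,\delta}, m_{\epsilon,\delta})$. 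Sending $\epsilon \to 0$ first and $\delta \to 0$ afterward, extracting weakly convergent subsequences, and applying Minty's method to identify the nonlinear limits---just as in Section~\ref{sec:proof-power-growth}---produces a limiting pair $(m,u)$ satisfying the variational inequality on the full nonnegative cone. Choosing test pairs $(\mu,\upsilon)=(m+t\psi, u+t\phi)$ for $t>0$ small and letting $t\to 0^+$ then recovers the Hamilton--Jacobi inequality in \eqref{def:hjb-strong} and the transport equation \eqref{def:transport-strong} of Definition~\ref{def:strong}.

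The main obstacle, and the reason the convex restriction must be constructed with care, is to ensure that the limiting $m$ is strictly positive on the set relevant to the Hamilton--Jacobi equality, so that (i) the singular quantities $H(x,Du,m)$ and $mD_pH(x,Du,m)$ remain integrable in the limit, (ii) the cut-off $\mu\geq\delta$ does not bind asymptotically, and (iii) the equality in \eqref{def:hjb-strong} holds on $\{m>0\}$ rather than being lost to the cone constraint. The plan is to leverage the blow-up encoded in Assumption~\ref{onH.congestiongrowth}---concretely, that $mD_pH(x,p,m)$ diverges as $m\to 0^+$---together with the uniform $L^1$ bound on $m_{\epsilon,\delta} D_pH(x,Du_{\epsilon,\delta},m_{\epsilon,\delta})$ derived from the transport equation: any concentration of $m_{\epsilon,\delta}$ near $\delta$ on a set of positive measure would contradict this uniform bound. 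This yields the quantitative lower bound on $m$ that survives passage to the limit; with it in hand, the remainder of the argument proceeds in direct parallel with the proof of Theorem~\ref{thm:main-simple.power.growth}.
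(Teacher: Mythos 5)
Your overall architecture (restricted convex domain, $\bar\gamma$-Laplacian regularization, uniform estimates, Minty limit) is the right one, but there are two genuine gaps. First, the domain $K_\delta=\{\mu\geq\delta\}$ is not enough to make the operator well defined when $\tau>0$: by \eqref{eq:boundsH+} the Hamiltonian grows like $|D\upsilon|^{\alpha(1+\frac{\tau}{\beta})}$, and for $\upsilon\in W^{1,\bar\gamma}$ with $\bar\gamma=\alpha\bar\beta'$ the function $|D\upsilon|^{\alpha(1+\frac{\tau}{\beta})}$ need not lie in $L^{\bar\beta'}(\Tt^d)$, so the pairing of $H(x,D\upsilon,\mu)$ with densities in $L^{\bar\beta}(\Tt^d)$ can be infinite even though $\mu\geq\delta$. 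This is why the paper's domain \eqref{def-domainK} imposes, in addition to $\essinf\mu>0$, the joint integrability condition $|D\upsilon|^{\alpha(1+\frac{\tau}{\beta})}/(\mu^\tau+1)\in L^{\bar\beta'}(\Tt^d)$; your $K_\delta$ omits exactly the constraint that makes the "careful construction of the convex domain" nontrivial.

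Second, and more seriously, your mechanism for recovering \eqref{def:hjb-strong}--\eqref{def:transport-strong} in the limit does not work. The limiting pair $(m,u)$ is only in the closure of the domain (indeed $m$ may vanish on a set of positive measure, which Definition~\ref{def:strong} explicitly allows), so test pairs of the form $(m+t\psi,u+t\phi)$ are in general inadmissible and the hemicontinuity upgrade of Remark~\ref{rmk:hemicontinuous-upgrade} is unavailable --- this is precisely the obstruction the paper isolates. Your proposed fix, a quantitative positive lower bound on $m$ obtained because ``$mD_pH(x,p,m)$ diverges as $m\to0^+$'', rests on a false premise: Assumption~\ref{onH.congestiongrowth} together with (\ref{a:mDpHat0}) and Remark~\ref{rmk:implied-a} gives an \emph{upper} bound on $mD_pH$ and requires it to have a finite limit as $m\to0^+$ (in the model example it tends to $0$ for $\tau<1$); it is $H$, not $mD_pH$, that may blow up, and no lower bound on $m$ should be expected or is needed. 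The paper instead proves a dedicated lemma (Lemma~\ref{lem:weak.is.strong}): one defines, pointwise in $x$, a density $\mu$ as the minimal correction of $m$ for which $u+H(x,Du,\mu)-V\leq k$, checks its measurability and $L^{\bar\beta}$/$L^{\bar\beta'}$ bounds, truncates it from below by $\delta$ to enter the admissible set, and passes $\delta\to0$ by dominated convergence to force $\mu=m$ a.e.; a symmetric construction gives the equality on $\{m>0\}$, and the transport equation follows from testing with $(\max\{m,\delta\},u+t\varphi)$ and a first-order variation in $t$. Relatedly, your a priori estimates cannot be obtained by simply plugging constant test pairs into the variational inequality: as in Theorem~\ref{thm:apriori-simple.power.growth}, the uniform-in-$\epsi$ bounds use the Hamilton--Jacobi equality on $\{m_\epsi>0\}$ together with the transport equation tested with $u_\epsi$, so the strong-solution lemma must be established for the regularized solutions \emph{before} the estimates, not after the limit.
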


Finally, we address Hamiltonians satisfying only minimal growth assumptions and prove the existence of weak solutions (Theorem~\ref{thm:main-weak.power.growth}, proved in Section \ref{sec:proof-weak-growth}). This most challenging case requires a different technique, infimal convolution of the Hamiltonian, to compensate for the lack of controlling growth assumptions on the Hamiltonian.

\begin{theorem}\label{thm:main-weak.power.growth} 
    Assume that $H:\Tt^d\times \RR^d \times \RR^+ \to \RR$ satisfies Assumption~\ref{onH.monotone} and Assumption~\ref{onH.weakgrowth}. Let $\bar\beta$ and $\bar\gamma$ be given by \eqref{eq:gamma}, and assume that $V\in L^{\bar\beta'}(\Tt^d)$. Then, there exists a weak solution $(m,u)\in L^{\bar \beta}(\Tt^d) \times W^{1,\bar\gamma}(\Tt^d)$ to Problem~\ref{prob.smfg} in the sense of Definition~\ref{def:weak}.
\end{theorem}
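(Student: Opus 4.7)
The plan is to regularize $H$ in the momentum variable via infimal convolution, apply the already-proven strong-existence result Theorem~\ref{thm:main-simple.power.growth} to each regularized problem, and then pass to the limit in the variational inequality of Definition~\ref{def:weak}. The structural reason this should succeed is that the weak formulation \eqref{def:weak-solution-concept} places every occurrence of $H$ and $D_pH$ at the test pair $(\mu,\upsilon)$, where the arguments are bounded by hypothesis; the unknowns $(m_\varepsilon,u_\varepsilon)$ enter only linearly through $(\mu-m_\varepsilon)$, $(\upsilon-u_\varepsilon)$, and $(D\upsilon-Du_\varepsilon)$, so that weak convergence alone should suffice to close the argument without any compensated-compactness work.

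Concretely, for a fixed exponent $r>1$ chosen large enough to match $(\bar\beta,\bar\gamma)$, I would set
\begin{equation*}
H_\varepsilon(x,p,m)=\inf_{q\in\Rr^d}\Big\{H(x,q,m)+\tfrac{1}{\varepsilon r}|p-q|^{r}\Big\}.
\end{equation*}
First I would verify that $H_\varepsilon$ inherits the joint monotonicity of Assumption~\ref{onH.monotone}: the penalty depends only on $p$ and is itself monotone there, leaving the coupling structure in $(p,m)$ undisturbed. Since $H_\varepsilon\leq H$ and $H_\varepsilon(x,p,m)\leq H(x,0,m)+\tfrac{1}{\varepsilon r}|p|^r$, Assumption~\ref{onH.powergrowth} then holds for $H_\varepsilon$ with exponents compatible with the same $(\bar\beta,\bar\gamma)$ as in the statement. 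Theorem~\ref{thm:main-simple.power.growth} will then produce strong solutions $(m_\varepsilon,u_\varepsilon)\in L^{\bar\beta}(\Tt^d)\times W^{1,\bar\gamma}(\Tt^d)$ to the regularized system.

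Next I would derive $\varepsilon$-uniform a priori estimates. Each strong solution satisfies the variational inequality \eqref{def:weak-solution-concept} with $H$ replaced by $H_\varepsilon$; testing with a small family of admissible pairs, starting from $(\mu,\upsilon)=(1,0)$, and using Assumption~\ref{onH.weakgrowth} together with $H_\varepsilon\leq H$ to control the one-sided terms should give uniform bounds in $L^{\bar\beta}\times W^{1,\bar\gamma}$. I would then extract weakly convergent subsequences $m_\varepsilon\rightharpoonup m$ in $L^{\bar\beta}$ and $u_\varepsilon\rightharpoonup u$ in $W^{1,\bar\gamma}$, fix any admissible $(\mu,\upsilon)$ with $\essinf\mu>0$, and pass to the limit. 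The coefficient functions $-\upsilon-H_\varepsilon(x,D\upsilon,\mu)+V$ and $\mu D_pH_\varepsilon(x,D\upsilon,\mu)$ are $\varepsilon$-uniformly bounded because $D\upsilon,\mu$ are bounded, and they converge a.e.\ to $-\upsilon-H(x,D\upsilon,\mu)+V$ and $\mu D_pH(x,D\upsilon,\mu)$ by standard Moreau--Yosida theory; dominated convergence then yields strong convergence in the dual spaces $L^{\bar\beta'}$ and $L^{\bar\gamma'}$, which combined with the weak convergence of $(\mu-m_\varepsilon)$ and $(D\upsilon-Du_\varepsilon)$ recovers \eqref{def:weak-solution-concept}. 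Non-negativity of $m$ is immediate from $m_\varepsilon\geq 0$.

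The main obstacle will be the step ensuring that Moreau--Yosida regularization in $p$ produces a Hamiltonian satisfying the \emph{full} Assumption~\ref{onH.powergrowth} (not only the $r$-growth upper bound) with $\varepsilon$-independent exponents, and that it simultaneously preserves the joint monotonicity of Assumption~\ref{onH.monotone}. A related technical subtlety is the a.e.\ convergence $D_pH_\varepsilon(x,D\upsilon,\mu)\to D_pH(x,D\upsilon,\mu)$: this is automatic at points where $H(x,\cdot,\mu)$ is classically differentiable, but under the weaker Assumption~\ref{onH.weakgrowth} it may be necessary to interpret $D_pH$ in \eqref{def:weak-solution-concept} as a measurable selection from the $p$-subdifferential, and to verify that the inequality is insensitive to this choice. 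Overcoming these two points is precisely the technical cost of trading power growth for the minimal growth of Assumption~\ref{onH.weakgrowth}.
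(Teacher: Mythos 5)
Your overall architecture is the same as the paper's: regularize $H$ by infimal convolution in $p$, solve the regularized MFG, get $\epsi$-uniform estimates, and pass to the weak limit against bounded test pairs, exploiting that in \eqref{def:weak-solution-concept} the unknowns enter only linearly. However, as written there are two genuine gaps. First, the choice of penalty exponent ``$r>1$ chosen large enough'' is incompatible with invoking Theorem~\ref{thm:main-simple.power.growth}. The inf-convolution only inherits the lower bound of \eqref{eq:assH.lower.weak} at the rate $|p|^\alpha$ (uniformly in $\epsi$), while the gradient bound you can prove for $H_\epsi$ is of order $|p|^{r-1}$; Assumption~\ref{onH.powergrowth} requires these two exponents to be the \emph{same} $\alpha$ in \eqref{eq:assH.DpH.upper.simple} and \eqref{eq:assH.lower.simple}, and a lower bound at rate $|p|^r$ is false for $H_\epsi$ (the infimum always follows the slower $\alpha$-growth). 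So with $r>\alpha$ the regularized Hamiltonian does not satisfy Assumption~\ref{onH.powergrowth} and Theorem~\ref{thm:main-simple.power.growth} cannot be applied. The fix is to take the penalty with exponent $\alpha$ itself; the paper uses $K(q)=|q|+|q|^\alpha$, where the extra $|q|$ term has the additional payoff (Remark~\ref{observation}) that $\delta K(0)=B_1(0)$, so $H^\epsi=H$ and $D_pH^\epsi=D_pH$ \emph{exactly} at any bounded $(D\upsilon,\mu)$ with $\essinf\mu>0$ once $\epsi$ is small, which removes the need for your Moreau--Yosida convergence and domination step altogether (and makes your worry about subdifferential selections moot --- note also that (\ref{a:regularityH}) already posits a continuous $D_pH$, so $H$ is differentiable in $p$ by hypothesis).

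Second, the a priori estimate step is not merely ``testing with a small family of admissible pairs, starting from $(\mu,\upsilon)=(1,0)$.'' Testing the strong variational inequality at $(1,0)$ and using coercivity only yields
\begin{equation*}
\frac{1}{C}\Vert m_\epsi\Vert_{L^{\bar\beta}}^{\bar\beta}+\epsi\,\Vert u_\epsi\Vert_{W^{1,\bar\gamma}}^{\bar\gamma}\leq C\bigl(1+\Vert m_\epsi\Vert_{L^{\bar\beta}}+\Vert u_\epsi\Vert_{W^{1,\bar\gamma}}\bigr),
\end{equation*}
in which the $u_\epsi$-term on the left carries the factor $\epsi$, so no $\epsi$-uniform bound on $u_\epsi$ (nor, consequently, on $m_\epsi$) follows. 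The paper instead derives the uniform bounds from the \emph{strong-solution identities} of the regularized problem (the pointwise Hamilton--Jacobi inequality, the equality on $\{m_\epsi>0\}$, and the transport equation tested with $u_\epsi$, as in Theorem~\ref{thm:apriori-simple.power.growth}), combined with the $\epsi$-uniform structural bounds on the regularized Hamiltonian: $D_pH^\epsi\cdot p-H^\epsi\geq \frac1C m^\beta-C$ (from $H^\epsi(x,0,m)\leq H(x,0,m)$ and convexity) and $H^\epsi\geq\frac1C|p|^\alpha-C(m^\beta+1)$ (which is exactly where the penalty exponent being at least $\alpha$ is used), together with Poincar\'e--Wirtinger. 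Since you do obtain strong solutions of the regularized system, you have these identities at your disposal, but your proposal needs to say that this --- not testing the inequality with fixed competitors --- is the mechanism producing $\epsi$-independent constants. With these two corrections (penalty exponent $\alpha$, estimates via the strong-solution property), your argument becomes essentially the paper's proof.
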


To illustrate the scope of our results, we give three prototypical Hamiltonians covered by our assumptions. These are simplified examples; our theory applies to more general, non-separable forms, including the power-growth case below.
\begin{itemize}
    \item \textbf{Power‐growth case:} An example covered by Theorem~\ref{thm:main-simple.power.growth} is 
    \[
        H(x,p,m)=a(x)\,|p|^\alpha - b(x)\,m^\beta,
    \]
    where $\alpha>1$, $\beta>0$, and the functions $a,b\in L^\infty(\Tt^d)$ satisfy $\essinf a>0$ and $\essinf b>0$.

    \item \textbf{Congestion case:} An example covered by Theorem~\ref{thm:main-congestion.growth} is
    \[
        H(x,p,m)=a(x)\,\frac{|p|^{\alpha\big(1+\tfrac\tau\beta\big)}}{m^\tau} - b(x)\,m^\beta,
    \]
    with the same conditions on $\alpha,\beta,a,b$ as above, and with a congestion exponent $\tau\in[0,1]$.

    \item \textbf{Weak‐growth case:} An example covered by Theorem~\ref{thm:main-weak.power.growth} is
    \[
        H(x,p,m)=g(x)\,h(p,m) + a(x)\,|p|^\alpha - b(x)\,m^\beta,
    \]
    where $\alpha,\beta,a,b$ are as before, $g\in L^\infty(\Tt^d)$ with $g\geq 0$, and $h:\RR^d\times\RR^+\to\RR^+_0$ is any $C^1$ function satisfying the monotonicity condition \eqref{hmon} but no further growth bounds.
\end{itemize}

The remainder of this paper is organized as follows. In Section~\ref{sect:prel}, we present the mathematical setting, including our standing assumptions and key results from monotone operator theory. The subsequent three sections are devoted to the proofs of our main results: Section~\ref{sec:proof-power-growth} addresses the power-growth case, Section~\ref{sec:proof-congestion} treats the congestion case, and Section~\ref{sec:proof-weak-growth} establishes the result for minimal growth conditions.

\section{Preliminaries}\label{sect:prel}

This section provides the essential preliminaries for our main results. After a brief discussion of the notation, 
we
outline, in Section~\ref{subs:assumptions}, the assumptions on the Hamiltonian $H$; these establish key analytical properties like monotonicity and controlled growth that are vital for our proofs.
Building upon these assumptions, in Section~\ref{estham}, we then derive several  estimates for the Hamiltonian which are direct consequences of these conditions and will be instrumental in our subsequent analysis. 
Next, in Section~\ref{subs:monotone}, we review core concepts from monotone operator theory and conclude, in Section~\ref{subs:solutions}, by discussing the specific notions of strong and weak solutions employed in this paper.

\subsection{Notation}

Here, and throughout this manuscript,  the symbols $\RR$, $\RR^+$, $\RR^+_0$, $\RR^d$, and $\Tt^d$, denote the set of real numbers, the set of strictly positive real numbers, the set of non-negative real numbers, the $d$-dimensional Euclidean space, and the $d$-dimensional torus, respectively. A generic point on $\Tt^d$ is denoted by $x$, and the Sobolev derivative of a function $f\colon \Tt^d\to \RR$ is denoted by $Df\colon \Tt^d\to \RR^d$. For the Hamiltonian, $H\colon \Tt^d\times\RR^d\times \RR^+ \to \RR$, the generic input variable is $(x,p,m)$; as such, the derivative of $H$ with respect to the middle variable is denoted by $D_pH\colon \Tt^d\times \RR^d\times \RR^+ \to \RR^d$, even when the middle variable is substituted by another expression.

\begin{remark}[On constants depending only on the problem data]\label{rmk:onC} 
We adopt the standard PDE convention of reusing the letter \(C\) for constants that may change between estimates. Typically, these constants depend only on problem data (e.g., the various constants in the assumptions).  In particular, the form of any such inequality must be so that whenever $C$ is replaced by a greater value, the inequality remains valid.
\end{remark}

\subsection{Assumptions}\label{subs:assumptions}
In our analysis, we do not require the maps $(x,p,m)\mapsto H(x,p,m)$ or $(x,p,m)\mapsto mD_pH(x,p,m)$ to be defined at $m=0$.
However, because strong solutions may vanish on sets of positive measure, we need to give sense to expressions involving these maps when $m=0$.
The following mild assumptions ensure that $H(x,p,0)$ and  $mD_pH(x,p,m)$ remain well-defined even when  $m=0$.
They also specify the standing regularity on the Hamiltonian.

\begin{hypotheses}{H}
\item \label{a:regularityH} 
The Hamiltonian, $H\colon \Tt^d\times \RR^d \times \RR^+ \to \RR$, is a measurable function such that the map $(p,m) \mapsto H(x,p,m)$ and the $p$-gradient map $(p,m) \mapsto D_pH(x,p,m)$ are continuous on $\RR^d \times \RR^+$ for a.e.~$x\in\Tt^d$. \smallskip

\item \label{a:Hat0} 
For  a.e.~$x\in\Tt^d$ and all $p\in\RR^d$,  the limit 
\[\lim_{m\to 0^+} H(x,p,m)\] 
exists in $\RR \cup \{\infty\}$. In this case, we may consider the  $(\RR \cup \{\infty\})$-valued extension of $H$, $\widetilde H: \Tt^d\times \RR^d \times \RR^+_0 \to \RR \cup \{\infty\}$ defined by
\begin{equation*}%\label{eq:contH0}
    \begin{aligned}
        \widetilde H(x,p,m) = 
        \begin{cases}
            H(x,p,m) & \text{if } m>0\\
            \displaystyle\lim_{m\to 0^+} H(x,p,m) & \text{if } m=0.
        \end{cases}  
    \end{aligned}
\end{equation*}
To simplify the exposition, by abusing notation, we denote by the same letter, $H$, the extension above. In particular, we write $H(x,p,0)$ in place of $\widetilde H(x,p,0)$. \smallskip

\item  \label{a:mDpHat0}
For  a.e.~$x\in\Tt^d$ and all $p\in\RR^d$, the limit
\[
\lim_{m\to 0^+} m D_p H(x,p,m)
\]
exists in $\RR^d$; moreover, for the map $J: \Tt^d\times \RR^d \times \RR^+_0 \to \RR^d$ defined by
\begin{equation*}%\label{eq:contmDpH0}
    \begin{aligned}
        J(x,p,m) = 
        \begin{cases}
            m D_p H(x,p,m) & \text{if } m>0,\\
            \lim_{m\to 0^+} m D_pH(x,p,m) & \text{if } m=0,
        \end{cases}  
    \end{aligned}
\end{equation*}
the function $J(x,\cdot,\cdot)$ is continuous on $\RR^d\times \RR^+_0$ for a.e.~$x\in\Tt^d$. For notational convenience, we write $mD_p H$ in place of $J$, even when $m=0$.  For instance, given $(\mu,\upsilon)\in L^1(\Tt^d;\RR^+_0) \times W^{1,1}(\Tt^d)$, we write  $\mu D_pH(x,D\upsilon,\mu)$ in place of $J(x,D\upsilon,\mu)$. 
\end{hypotheses}

Throughout this manuscript, we adopt the following standing assumption on the Hamiltonian, encoding a monotonicity property for the MFG in \eqref{smfg} that enables the use of monotone operator techniques.
This is the standard assumption introduced in \cite{ll2} to establish the uniqueness of solutions. 

\begin{assumption}
        \label{onH.monotone}
    $H$ satisfies (\ref{a:regularityH}), and the following monotonicity condition holds for a.e.~$x\in\Tt^d$ and for all $(p_1,m_1)$, $(p_2,m_2) \in\RR^d\times\RR^+$:
        \begin{equation}
    \label{hmon}
                \begin{aligned}
                        &\big(- H(x,p_1,m_1) +  H(x,p_2,m_2)\big) (m_1 - m_2)\\ &\quad + 
                        \big( m_1 D_p H(x,p_1,m_1) - m_2 D_p H(x,p_2,m_2)\big) \cdot (p_1-p_2) \geq 0.
                \end{aligned}
        \end{equation}
\end{assumption}

\begin{remark}\label{rmk:cxty}
Suppose that $H\colon \Tt^d\times \RR^d \times \RR^+ \to \RR$ satisfies Assumption~\ref{onH.monotone}.
It follows by taking $p_1=p_2$ in \eqref{hmon} that for a.e.~$x\in\Tt^d$ and for all $p\in\RR^d$, the function $H(x,p,\cdot)$ is nonincreasing in $\RR^+$. Similarly, it follows by taking $m_1=m_2$ in \eqref{hmon} that for a.e.~$x\in\Tt^d$ and for all $m\in\RR^+$, the function $H(x,\cdot,m)$ is convex in $\RR^d$.

Then, the monotonicity of $H(x,p,\cdot)$ implies (\ref{a:Hat0}): %the extension $H\colon \Tt^d\times \RR^d \times \RR^+_0 \to \RR\cup\{\infty\}$ described in Subsection~\ref{sec:note} exists:
\[H(x,p,0) = \lim_{m\to 0^+} H(x,p,m) = \sup_{m>0} H(x,p,m).\]
Moreover, by the given supremum identity and by the convexity of $H(x,\cdot,m)$, the map $p\mapsto H(x,p,0)$ is lower semicontinuous and convex for a.e.~$x\in\Tt^d$.

When $H$ satisfies \eqref{a:mDpHat0}, then \eqref{hmon} holds for all $(p_1,m_1)$, $(p_2,m_2) \in \RR^d\times\RR^+_0$,  provided
$H(x,p_1,0)$ and $H(x,p_2,0)$ are finite.
\end{remark}

Next, we introduce the additional assumptions that accompany Assumption~\ref{onH.monotone} in the main theorem statements. These assumptions (Assumptions~\ref{onH.powergrowth}, \ref{onH.congestiongrowth}, and \ref{onH.weakgrowth}) are mainly {growth- and coercivity-type conditions} on the Hamiltonian $H$. They fulfill several indispensable roles throughout the analysis:
\begin{itemize}
    \item \textbf{Ensuring Coercivity}: A primary function of these conditions, particularly the lower bounds on $H$ with respect to the momentum $p$ and the terms describing the behavior of $H$ with respect to the density $m$ is to establish the coercivity of the MFG operator. Coercivity is key for applying the abstract existence theorem for monotone operators, Theorem~\ref{thm:monotone.abstract-weak}.
    
    \item \textbf{Guaranteeing Well-Posedness and Integrability}: The growth conditions, including upper bounds on the partial derivative $D_pH$ and on $H$ itself, are crucial for ensuring that all terms in the MFG operator are well-defined. This involves guaranteeing that expressions like $H(x,Du,m)$ and $mD_pH(x,Du,m)$ have the required integrability based on the regularity of $m$ and $u$.

    \item \textbf{Enabling Uniform A Priori Estimates}: These assumptions are fundamental for deriving uniform a priori estimates for the solutions $(m_\epsi, u_\epsi)$ of the appropriate regularized MFG; for example, the operator defined in \eqref{defAepsi} is used to derive the estimates in Theorem~\ref{thm:apriori-simple.power.growth}. These estimates, which must be independent of the regularization parameter $\epsi$, are critical for the compactness arguments used in the proofs. 
\end{itemize}
The hierarchy presented by these assumptions (see Remark~\ref{rem:hierarchy}), moving from standard power-growth (Assumption~\ref{onH.powergrowth}) to less restrictive conditions (Assumption~\ref{onH.weakgrowth}), aims to broaden the applicability of the existence theory to a wider range of MFG models. For example,  Assumption~\ref{onH.congestiongrowth}, is specifically designed to accommodate Hamiltonians that model congestion effects. In such cases, the Hamiltonian may depend on the density $m$ in a possibly singular way (e.g., terms like $m^{-\tau}$).
 Finally, dealing with {weaker growth conditions} as in Assumption~\ref{onH.weakgrowth} requires more sophisticated techniques, such as the Hamiltonian regularization via infimal convolution detailed later in the paper. This typically comes at the cost of establishing the existence of weak solutions (as in Theorem~\ref{thm:main-weak.power.growth}), rather than the strong solutions obtained under more restrictive growth hypotheses.

\begin{assumption}
        \label{onH.powergrowth}      
    $H$ satisfies (\ref{a:regularityH}), and there exist $C>0$, $\alpha>1$, and $\beta>0$ such that the following estimates hold  for a.e.~$x\in\Tt^d$ and for all $(p,m)\in\RR^d\times\RR^+$:
\begin{align}
            &H(x,0,m) \leq -\frac{1}{C}m^{\beta} + C,
            \label{eq:assH.upper.simple} \\
            & |D_pH(x,p,m)| \leq C\big( |p|^{\alpha-1} +  m^{\beta-\frac{\beta}{\alpha}}+1\big),\label{eq:assH.DpH.upper.simple}\\
                        &  H(x,p,m) \geq \frac{1}{C}|p|^{\alpha} - C(  m^{\beta}+1).\label{eq:assH.lower.simple}  
\end{align} 
\end{assumption}

\begin{assumption}\label{onH.congestiongrowth}
        $H$ satisfies (\ref{a:regularityH}) and (\ref{a:mDpHat0}), and there exist $C>0$, $\alpha>1$, $\beta > 0$, and $\tau\in [0,1]$ such that the following estimates hold for a.e.~$x\in\Tt^d$ and for all $(p,m)\in\RR^d\times\RR^+$: 
        \begin{align}
        &H(x,0,m)\leq -\frac{1}{C}m^{\beta} + C, \label{eq:assH.upper.congestion}\\
                & |D_pH(x,p,m)| \leq C\left(\frac{1}{m} + \frac{1}{m^\tau}\right)|p|^{{\alpha(1+\frac{\tau}{\beta})}-1} +  C\left(m^{\beta-\frac{\beta}\alpha}+\frac{1}{m}\right),\label{eq:assH.DpH.upper.congestion}\\
                & H(x,p,m) \geq  \frac{1}{C}\left(\frac{1}{m^\tau + 1} \right)|p|^{\alpha(1+\frac{\tau}{\beta})} - C(m^{\beta}+1). \label{eq:assH.lower.congestion}
    \end{align}
\end{assumption}

\begin{assumption}\label{onH.weakgrowth}
    $H$ satisfies (\ref{a:regularityH}), and there exist $C>0$, $\alpha>1$, and $\beta > 0$ such that the following estimates hold for a.e.~$x\in\Tt^d$ and for all $(p,m)\in\RR^d\times\RR^+$:
        \begin{align}
        & H(x,0,m) \leq -\frac{1}{C}m^{\beta} + C, \label{eq:assH.upper.weak} \\
        & |D_pH(x,p,m)| \leq f(p,m) \text{ for some continuous } f\colon\RR^d\times\RR^+\to \RR,
        %\sup_{(p,m)\in K}|D_pH(x,p,m)| \in L^\infty (\Tt^d) \text{ for any compact } K\in\RR^d\times \RR^+
        \label{eq:assH.DpH.upper.weak}\\
        & H(x,p,m) \geq \frac{1}{C}|p|^{\alpha} - C(m^{\beta}+1). \label{eq:assH.lower.weak}
        \end{align}
\end{assumption}

Based on the parameters in the Assumptions~\ref{onH.powergrowth},~\ref{onH.congestiongrowth}, and~\ref{onH.weakgrowth}, we introduce the following notation:
\begin{equation}\label{eq:gamma}
        \bar\beta = \beta + 1 \quad \hbox{and}\quad 
        \bar\gamma = \alpha\frac{\beta+1}{\beta}.
\end{equation}
Moreover, $\bar\beta'$ and $\bar\gamma'$ denote the conjugate exponents as usual. As such, $\bar\gamma = \alpha\bar\beta'$. %Note that $\bar\gamma > \bar\beta'$ and, equivalently, $\bar\beta > \bar\gamma'$.

\begin{remark}\label{rmk:implied-a}
    First, the bound \eqref{eq:assH.DpH.upper.simple} in Assumption~\ref{onH.powergrowth} implies (\ref{a:mDpHat0}) because the function
    \[J(x,p,m) = 
        \begin{cases}
            m D_p H(x,p,m) & \text{if } m>0\\
            0 & \text{if } m=0
        \end{cases}\]
    is then continuous in $(p,m)$ for a.e.~$x\in\Tt^d$.

    Secondly, the bound \eqref{eq:assH.DpH.upper.congestion} in Assumption~\ref{onH.congestiongrowth} implies
    \[|mD_pH(x,p,m)| \leq C(1 + m^{1-\tau})|p|^{{\alpha(1+\frac{\tau}{\beta})}-1} +  C(m^{\beta+1-\frac{\beta}\alpha}+1),\]
    which then holds for all $(p,m)\in\RR^d\times\RR^+_0$ when the left-hand side is interpreted in the sense of~(\ref{a:mDpHat0}).
    
    Finally, when either Assumption~\ref{onH.powergrowth} or Assumption~\ref{onH.congestiongrowth} hold together with (\ref{a:Hat0}), the corresponding bounds in \eqref{eq:assH.upper.simple} and \eqref{eq:assH.lower.simple} or \eqref{eq:assH.upper.congestion} and \eqref{eq:assH.lower.congestion} hold even when $m=0$.
\end{remark}

\begin{remark}[Hierarchy of the assumptions]\label{rem:hierarchy}
    As mentioned in the Introduction, Assumption~\ref{onH.powergrowth} implies the $\tau=0$ case of Assumption~\ref{onH.congestiongrowth}, while Assumption~\ref{onH.congestiongrowth} implies Assumption~\ref{onH.weakgrowth}. These implications are mostly straightforward, with the only subtlety being that \eqref{eq:assH.lower.congestion} implies \eqref{eq:assH.lower.weak}, possibly with a different value of $C$. However, this implication is easily justified using the following Young-type inequality:
\[\frac{|p|^{\alpha(1+\frac{\tau}{\beta})}}{m^\tau+1} + (m^\beta+1) \geq |p|^\alpha\frac{(m^\beta+1)^{\frac{\tau}{\beta+\tau}}}{(m^\tau+1)^\frac{\beta}{\beta+\tau}} \geq \frac{1}{2^{\frac{\beta}{\beta+\tau}}}|p|^\alpha.\]
\end{remark}

\subsection{Estimates on the Hamiltonian} \label{estham}
Here, we establish some uniform bounds on the Hamiltonian resulting from the assumptions introduced in the preceding section. We begin by showing that convexity as in Remark~\ref{rmk:cxty} and an upper bound as in \eqref{eq:assH.upper.simple}, \eqref{eq:assH.upper.congestion}, or \eqref{eq:assH.upper.weak} yield a lower bound on 
the Lagrangian (see Remark \ref{laglowerbound}).

\begin{lemma}\label{lem:A1+anyother}
Assume that $H\colon \Tt^d\times \RR^d \times \RR^+ \to \RR$ satisfies Assumption~\ref{onH.monotone}. Suppose further that there exist $\beta>0$ and $C>0$ such that for a.e.~$x\in\Tt^d$ and for all $m\in\RR^+$, one has
\begin{equation}\label{eq:common}
    H(x,0,m) \leq -\frac{1}{C}m^{\beta} + C.
\end{equation}
Then,  we have  for a.e.~$x\in\Tt^d$ and for all $(p,m)\in\RR^d\times\RR^+$ that
    \begin{equation}\label{eq:DpHdotpminusH-estimate1}
        D_pH(x,p,m)\cdot p - H(x,p,m) \geq \frac{1}{C} m^{\beta} - C.
    \end{equation}  
\end{lemma}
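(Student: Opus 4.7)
The plan is to exploit the convexity of $H(x,\cdot,m)$ in the $p$-variable (guaranteed by Assumption~\ref{onH.monotone} via Remark~\ref{rmk:cxty}) and combine it with the hypothesis \eqref{eq:common}. The argument is essentially one line of convex analysis, so I expect no real obstacle; the proof is primarily about identifying the right inequality.

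First, I would fix $x$ (outside a null set) and $m\in\RR^+$, and recall from Remark~\ref{rmk:cxty} that $p\mapsto H(x,p,m)$ is convex and, by (\ref{a:regularityH}), differentiable in $p$. The standard tangent-plane inequality for a differentiable convex function applied at the point $p$ evaluated at $0$ then gives
\begin{equation*}
H(x,0,m) \geq H(x,p,m) + D_p H(x,p,m)\cdot(0 - p),
\end{equation*}
which rearranges to
\begin{equation*}
D_p H(x,p,m)\cdot p - H(x,p,m) \geq -H(x,0,m).
\end{equation*}

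Next, I would invoke the hypothesis \eqref{eq:common}, namely $H(x,0,m)\leq -\tfrac1C m^\beta + C$, to deduce
\begin{equation*}
-H(x,0,m) \geq \frac{1}{C}m^\beta - C,
\end{equation*}
and chain this with the previous inequality to conclude \eqref{eq:DpHdotpminusH-estimate1}. Since the bound is independent of $p$, it is uniform in $p$ as required. The value of $C$ in the conclusion is the same as in the hypothesis \eqref{eq:common}, consistent with Remark~\ref{rmk:onC} on constants.
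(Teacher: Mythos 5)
Your proof is correct and follows the paper's argument exactly: convexity of $p\mapsto H(x,p,m)$ from Remark~\ref{rmk:cxty} gives the tangent-plane inequality $H(x,0,m)\geq H(x,p,m)-D_pH(x,p,m)\cdot p$, which is then combined with \eqref{eq:common}. No differences worth noting.
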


\begin{proof}
    By Remark~\ref{rmk:cxty}, the map $p\mapsto H(x,p,m)$ is convex at any fixed $(x,m)$; hence, 
    \[H(x,0,m)\geq H(x,p,m) - D_pH(x,p,m)\cdot p.\]
    Thus, we conclude \eqref{eq:DpHdotpminusH-estimate1} by combining the preceding inequality with \eqref{eq:common} and flipping the sign of both sides. %multiplying both sides by $(-1)$.  
\end{proof}

\begin{remark}
\label{laglowerbound}
The expression $D_pH(x,p,m)\cdot p - H(x,p,m)$ in \eqref{eq:DpHdotpminusH-estimate1} represents the Lagrangian written in terms of the momentum coordinate $p$. Indeed, in the context of optimal control, the Lagrangian $L(x,v,m)$ and Hamiltonian $H(x,p,m)$ are related via the Legendre transform, where $v = -D_pH(x,p,m)$ is the velocity and $p$ is the momentum. The inequality \eqref{eq:DpHdotpminusH-estimate1} thus provides a uniform lower bound on the Lagrangian, which is crucial for establishing coercivity in the subsequent analysis. 
The conclusion of the preceding lemma can be improved under Assumption~\ref{onH.powergrowth} to give a lower bound which is similar but in terms of powers of $p$ and $m$. Although we do not use it, we state it in \eqref{eq:alternative2-simple} of Lemma~\ref{lem:lower.simple} below for its independent interest. 
\end{remark}

Next, we first establish an upper bound on the Hamiltonian based on Assumption~\ref{onH.powergrowth}, which is the counterpart of the lower bound in \eqref{eq:assH.lower.simple}.

\begin{lemma}[{cf.~\cite[Lemmas~2.3 and~2.4]{ADFGU}}]\label{lem:ADFGU}
Assume that $H:\Tt^d\times \RR^d \times \RR^+\to \RR$ satisfies Assumption~\ref{onH.powergrowth}. Then, there exists a constant, $C>0$, for which the following bound holds  for  a.e.~$x\in\Tt^d$ and for all $(p,m)\in\RR^d\times\RR^+$:
\begin{equation}\label{eq:boundsH}
        \begin{aligned}
                 H(x,p,m) \leq C(|p|^\alpha +1) - \frac{1}{C}m^{\beta}.
        \end{aligned}
\end{equation}
Moreover, $mD_pH(x,Du,m) = 0$ in the set $\{x\colon m=0\}$ (in the sense of Remark~\ref{rmk:implied-a}) and,  for the parameters $\bar\beta>1$ and  $\bar\gamma>1$ defined in  \eqref{eq:gamma}, it holds that
\begin{equation}\label{eq:boundsmDpH}
        \begin{aligned}
                mD_pH(\cdot, Du, m) \in L^{\bar\gamma'}(\Tt^d;\RR^d)\enspace \hbox{ whenever } \enspace m\in L^{\bar \beta}(\Tt^d;\RR^+_0) \hbox{ and } u\in W^{1,\bar\gamma}(\Tt^d).
        \end{aligned}
\end{equation}
\end{lemma}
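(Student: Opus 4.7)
My plan is to prove the three assertions in sequence, using Assumption~\ref{onH.powergrowth} throughout.

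For the upper bound \eqref{eq:boundsH}, I would use the convexity of $p\mapsto H(x,p,m)$ established in Remark~\ref{rmk:cxty}, combined with the fundamental theorem of calculus:
\[
H(x,p,m) = H(x,0,m) + \int_0^1 D_pH(x,tp,m)\cdot p\,dt.
\]
Substituting the bound \eqref{eq:assH.upper.simple} for $H(x,0,m)$ and \eqref{eq:assH.DpH.upper.simple} for the integrand, I obtain an estimate of the form
\[
H(x,p,m) \leq -\tfrac{1}{C}m^\beta + C + C|p|^\alpha + Cm^{\beta - \beta/\alpha}|p| + C|p|.
\]
The mixed term $m^{\beta-\beta/\alpha}|p|$ is the key obstruction. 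I would eliminate it by Young's inequality with conjugate exponents $\alpha'=\alpha/(\alpha-1)$ and $\alpha$: this yields $m^{\beta-\beta/\alpha}|p|\leq \delta m^{(\beta-\beta/\alpha)\alpha'} + C_\delta|p|^\alpha = \delta m^\beta + C_\delta |p|^\alpha$, since $(\beta-\beta/\alpha)\alpha' = \beta$. Choosing $\delta$ small enough to absorb into $-\tfrac{1}{C}m^\beta$, and treating the remaining $C|p|$ likewise, gives \eqref{eq:boundsH} with a possibly larger constant (as allowed by Remark~\ref{rmk:onC}).

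For the vanishing of $mD_pH$ at $m=0$, I would multiply the pointwise bound \eqref{eq:assH.DpH.upper.simple} by $m$ to get
\[
m|D_pH(x,p,m)| \leq C\bigl(m|p|^{\alpha-1} + m^{\beta+1-\beta/\alpha} + m\bigr).
\]
Since $\beta+1-\beta/\alpha = 1 + \beta(\alpha-1)/\alpha > 0$, every term on the right tends to $0$ as $m\to 0^+$, locally uniformly in $p$. This gives $\lim_{m\to 0^+} mD_pH(x,p,m)=0$, confirming \eqref{a:mDpHat0} as stated in Remark~\ref{rmk:implied-a} and settling the second claim.

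For the $L^{\bar\gamma'}$ integrability \eqref{eq:boundsmDpH}, I would apply the same pointwise bound with $p=Du$ and handle the three resulting terms separately. For the mixed term $m|Du|^{\alpha-1}$, I would use H\"older's inequality with exponents $q_1 = \bar\beta/\bar\gamma'$ and $q_2 = (\bar\gamma-1)/(\alpha-1)$, so that $\bar\gamma' q_1 = \bar\beta$ and $(\alpha-1)\bar\gamma' q_2 = \bar\gamma$; the verification $1/q_1 + 1/q_2 = 1$ reduces, after using $\bar\gamma = \alpha\bar\beta'$, to the identity $\alpha + \beta(\alpha-1) = \alpha\beta+\alpha-\beta$. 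For the term $m^{\beta+1-\beta/\alpha}$, a direct computation using $\bar\gamma' = \alpha(\beta+1)/(\alpha\beta+\alpha-\beta)$ gives $(\beta+1-\beta/\alpha)\bar\gamma' = \bar\beta$, so its $L^{\bar\gamma'}$ norm is controlled by $\|m\|_{L^{\bar\beta}}$. Finally, $\bar\gamma' \leq \bar\beta$ since $\bar\beta - \bar\gamma' = (\beta+1)\beta(\alpha-1)/(\alpha\beta+\alpha-\beta)\geq 0$, so the linear term $m$ is controlled by the compact embedding $L^{\bar\beta}(\Tt^d)\subset L^{\bar\gamma'}(\Tt^d)$. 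The main obstacle is simply the arithmetic bookkeeping with the exponents $\bar\beta$ and $\bar\gamma$; everything works cleanly because of the specific relation $\bar\gamma = \alpha\bar\beta'$ built into the definition \eqref{eq:gamma}.
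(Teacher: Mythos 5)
Your proposal is correct and follows essentially the same route as the paper: the fundamental theorem of calculus plus \eqref{eq:assH.upper.simple}--\eqref{eq:assH.DpH.upper.simple} and Young's inequality for \eqref{eq:boundsH}, the observation behind Remark~\ref{rmk:implied-a} for the vanishing of $mD_pH$ at $m=0$, and the exponent identities equivalent to $\tfrac{1}{\bar\gamma'}=\tfrac{1}{\bar\beta}+\tfrac{\alpha-1}{\bar\gamma}$ and $\tfrac{1}{\bar\gamma'}=\tfrac{1}{\bar\beta}\big(\beta+1-\tfrac{\beta}{\alpha}\big)$ together with H\"older's inequality for \eqref{eq:boundsmDpH}. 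The only slip is calling the inclusion $L^{\bar\beta}(\Tt^d)\subset L^{\bar\gamma'}(\Tt^d)$ a \emph{compact} embedding; it is merely continuous, which is all that is needed.
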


\begin{proof}
    A similar result has  been established in \cite[Lemma~2.3 with $\tau=0$]{ADFGU} and \cite[Lemma~2.4]{ADFGU} under slightly different assumptions. For this reason, we include  the details here. 

    We first prove \eqref{eq:boundsH}. Using first the fundamental theorem of calculus, and then the bounds \eqref{eq:assH.upper.simple} and \eqref{eq:assH.DpH.upper.simple} to estimate the resulting  terms on the right-hand side,  we conclude  that  
    \begin{equation}\label{eq:case1-lb1}
        \begin{aligned}
            H(x,p,m) &= H(x,0,m) + \int_0^{1} D_p H(x,pt,m) \cdot p \, dt  \\
            & \leq -\frac{1}{C} m^\beta +C + C|p|^\alpha + C|p|\Big(m^{\beta - \frac\beta\alpha} +1\Big).
        \end{aligned}
    \end{equation}
On the other hand, Young's inequality yields for any $\theta>0$ that
\begin{equation}\label{eq:Young.simple.case}
    \begin{aligned}
   |p|(m^{\beta - \frac\beta\alpha}+1) & \leq \frac{|p|^\alpha}{\theta^{\alpha-1}\alpha}  + 
   \frac{\theta}{\alpha'}(m^{\beta - \frac\beta\alpha}+1)^{\alpha'} \leq \frac{|p|^\alpha}{\theta^{\alpha-1}\alpha} + \theta \tilde{c} (m^{\beta}+1),
   %\\|p| & \leq \frac{|p|^\alpha}{\delta\alpha}  + \frac{\delta^{\frac{\alpha'}{\alpha}}}{\alpha'}.
   \end{aligned}
\end{equation}
where $\tilde{c}$ depends only on $\alpha$ and $\beta$. Hence, using \eqref{eq:Young.simple.case} with $\theta=\frac{1}{2C^2\tilde{c}}$ combined with \eqref{eq:case1-lb1}, and redefining the constant $C$ appropriately, we conclude the proof of \eqref{eq:boundsH}.

Finally, to prove \eqref{eq:boundsmDpH}, we start by noting that   Remark~\ref{rmk:implied-a} and the notation in \eqref{a:mDpHat0} allow us to write  $mD_pH(x,Du,m) = 0$ in the set $\{x\colon m=0\}$ even if $D_pH(x,Du,m)$ is not defined. Then, we  observe that \eqref{eq:gamma} yields
\begin{equation*}
    \frac{1}{\bar\gamma'} = \frac{1}{\bar\beta} + \frac{\alpha-1}{\bar\gamma} \quad \text{and} \quad \frac{1}{\bar\gamma'} = \frac{1}{\bar\beta} \Big( \beta + 1 -\frac{\beta}{\alpha}\Big).
\end{equation*}
Hence, to conclude \eqref{eq:boundsmDpH}, it suffices to multiply the estimate in \eqref{eq:assH.DpH.upper.simple} by $m$ and use H\"older's inequality.
\end{proof}

Next, we provide two pairs of conditions that constitute an equivalent alternative to \eqref{eq:assH.lower.simple}.

\begin{lemma}[On condition \eqref{eq:assH.lower.simple}]\label{lem:lower.simple}
Assume that $H\colon \Tt^d\times \RR^d \times \RR^+ \to \RR$ satisfies Assumption~\ref{onH.monotone}, \eqref{eq:assH.upper.simple}, and \eqref{eq:assH.DpH.upper.simple}. Then, each of the following two pairs of conditions is an equivalent alternative to \eqref{eq:assH.lower.simple}:
\begin{equation}
\begin{cases}
       H(x,0,m) \geq -C(  m^{\beta}+1),\\
       D_p H(x,p,m) \cdot p \geq \frac{1}{C}|p|^\alpha - C(m^\beta + 1), 
    \end{cases} \label{eq:alternative1-simple} 
\end{equation}    
and
\begin{equation}
     \begin{cases}
         H(x,p,m) \geq -C(  m^{\beta}+1),\\
        D_p H(x,p,m) \cdot p - H(x,p,m) \geq \frac{1}{C}(|p|^\alpha + m^\beta) - C.
    \end{cases} \label{eq:alternative2-simple}
\end{equation}
\end{lemma}

\begin{proof}
We start by observing that Assumption~\ref{onH.monotone} and~\eqref{eq:assH.upper.simple} together imply \eqref{eq:DpHdotpminusH-estimate1} by Lemma~\ref{lem:A1+anyother}. Then, \eqref{eq:assH.lower.simple} together with \eqref{eq:DpHdotpminusH-estimate1} implies \eqref{eq:alternative1-simple}. On the other hand, \eqref{eq:alternative2-simple} implies \eqref{eq:alternative1-simple} by simple arithmetic. We now show that \eqref{eq:alternative1-simple} together with the given assumptions implies the other two.

First, \eqref{eq:alternative1-simple} and \eqref{eq:assH.DpH.upper.simple} imply \eqref{eq:assH.lower.simple}. In fact, let $\delta>1$. Using first the fundamental theorem of calculus, and then \eqref{eq:alternative1-simple} to estimate the first two terms on the resulting right-hand side  and \eqref{eq:assH.DpH.upper.simple} for the third term,  we obtain  the following estimate: 
    \begin{equation*}
        \begin{aligned}
            H(x,p,m) &= H(x,0,m)  + \int_{1/\delta}^1 D_p H(x,pt,m) \cdot p \,dt + \int_0^{1/\delta} D_p H(x,pt,m) \cdot p \,dt \\
            & \geq \left(\frac{\delta^\alpha-1}{C\alpha\delta^\alpha} -C\frac{1}{\alpha\delta^\alpha}\right) |p|^\alpha - \frac{C}{\delta}  |p|\Big(m^{\beta - \frac\beta\alpha} +1\Big) -C(m^\beta +1)(\ln{\delta} +1)
        \end{aligned}
    \end{equation*}
for a.e.~$x\in\Tt^d$ and for all $(p,m)\in\RR^d\times\RR^+$. In particular, choosing $\delta = (2C^2+1)^{1/\alpha}$ and redefining the constant $C$ appropriately, we get
\begin{equation}\label{eq:case1-lb1.ext}
    H(x,p,m)\geq \frac{1}{C}|p|^\alpha - C|p|\Big(m^{\beta - \frac\beta\alpha} +1\Big) -C(m^\beta +1)
\end{equation}
for a.e.~$x\in\Tt^d$ and for all $(p,m)\in\RR^d\times\RR^+$. Finally, we obtain \eqref{eq:assH.lower.simple} by combining \eqref{eq:case1-lb1.ext} and \eqref{eq:Young.simple.case} with $\theta=(2C^2/\alpha)^{1/(\alpha-1)}$. 

Secondly, \eqref{eq:alternative1-simple} together with Assumption~\ref{onH.monotone}, \eqref{eq:assH.upper.simple}, and \eqref{eq:assH.DpH.upper.simple} implies \eqref{eq:alternative2-simple}. It is enough to derive
\begin{equation}\label{eq:DpH-H}
        D_pH(x,p,m)\cdot p - H(x,p,m) \geq \frac{1}{C} ( |p|^{\alpha} +m^{\beta}) - C,
\end{equation}
because the other estimate is implied by \eqref{eq:assH.lower.simple}, which is already explained in the previous paragraph. In fact, using the fundamental theorem of calculus, we observe that \begin{equation}\label{eq:DpHdotpminusH-integral-formula}\begin{aligned}
       &D_pH(x,p,m)\cdot p - H(x,p,m) \\ 
       &\qquad = -H(x,0,m) + \int_0^1 \big(D_pH(x,p,m)-D_pH(x,pt,m)\big)\cdot p\,dt. 
    \end{aligned}
    \end{equation}
By Remark~\ref{rmk:cxty}, the map $p\mapsto H(x,p,m)$ is convex at any fixed $(x,m)$; hence, the integrand in \eqref{eq:DpHdotpminusH-integral-formula} is non-negative, so we do not increase the integral when we reduce its domain of integration. Thus, using \eqref{eq:assH.upper.simple}, \eqref{eq:alternative1-simple}, and \eqref{eq:assH.DpH.upper.simple}, we get for any $\delta>1$ that
\begin{equation}\label{eq:DpHdotpminusH-estimate2}
\begin{aligned}
       &D_pH(x,p,m)\cdot p - H(x,p,m) \\ 
       &\qquad \geq -H(x,0,m) + \int_0^{1/\delta} (D_pH(x,p,m) -D_pH(x,pt,m))\cdot p \,dt  \\
       &\qquad \geq -H(x,0,m) + \frac{1}{\delta}D_pH(x,p,m)\cdot p - |p|\int_0^{1/\delta} |D_pH(x,pt,m)|\,dt \\
       &\qquad \geq \left(\frac{1}{C\delta} -C\frac{1}{\alpha\delta^\alpha}\right)|p|^\alpha -C\frac{1}{\delta}(m^{\beta-\frac{\beta}\alpha}+1)|p| + \left(\frac{1}{C}-C\frac{1}{\delta}\right)m^\beta - C\left(1+\frac{1}{\delta}\right).
    \end{aligned}
\end{equation}
Then,  choosing $\delta \geq \max\{2C^2, (2C^2/\alpha)^{1/(\alpha-1)},1\}$, \eqref{eq:DpHdotpminusH-estimate2} yields
\begin{equation}\label{eq:DpH-H-new}
    D_pH(x,p,m)\cdot p - H(x,p,m) \geq \frac{1}{\delta}\left(\frac{1}{2C}|p|^\alpha + \frac{\delta}{2C}m^\beta -C(m^{\beta-\frac{\beta}\alpha}+1)|p|\right) - 2C.
\end{equation}
%Now, using \eqref{eq:Young.simple.case} with $\theta = \big(\frac{4C^2}{\alpha}\big)^{\frac{1}{\alpha-1}}$ to estimate the %right-hand side of \eqref{eq:DpH-H-new}, and then choosing
% $\delta$  sufficiently large, the negative term inside the parantheses on the right-hand side of \eqref{eq:DpH-H-new} can %be absorbed into the two positive terms therein, from which we conclude \eqref{eq:DpH-H} by redefining $C$.
{ On the other hand, it follows from  \eqref{eq:Young.simple.case} with $\theta = \big(\frac{4C^2}{\alpha}\big)^{\frac{1}{\alpha-1}}$ that
 \begin{equation*}
 -C(m^{\beta-\frac{\beta}\alpha}+1)|p| \geq -  \frac{1}{4C}|p|^\alpha - \hat{c}(m^\beta+1),
 \end{equation*}
where  $\hat{c}>0$ depends only on $\alpha$, $\beta$, and $C$. Substituting this back into \eqref{eq:DpH-H-new} yields
\[
    D_pH(x,p,m)\cdot p - H(x,p,m) \geq \frac{1}{\delta}\left(\frac{1}{4C}|p|^\alpha + \left(\frac{\delta}{2C} - \hat{c}\right)m^\beta - \hat{c}\right) - 2C.
\]
Finally, we choose $\delta$ sufficiently large so that $\frac{\delta}{2C} - \hat{c} \geq \frac{1}{4C}$, from which we conclude \eqref{eq:DpH-H} by redefining $C$ therein.
  } 
\end{proof}

We proceed by showing similar results to those in Lemma~\ref{lem:ADFGU} and Lemma~\ref{lem:lower.simple} within the framework of Assumption~\ref{onH.congestiongrowth}.

\begin{lemma}[{cf.~\cite[Lemma~2.3 and~2.4]{ADFGU}}]\label{lem:ADFGU.congestion}
Assume that $H:\Tt^d\times \RR^d \times \RR^+\to \RR$ satisfies Assumption~\ref{onH.congestiongrowth}. Then, there exists a constant, $C>0$, for which the following bound holds for a.e.~$x\in\Tt^d$ and for all $(p,m)\in\RR^d\times\RR^+$:
\begin{equation}\label{eq:boundsH+}
        \begin{aligned}
                 H(x,p,m) \leq C\left(\frac{1}{m} + \frac{1}{m^\tau}\right)(|p|^{\alpha(1+\frac{\tau}{\beta})} +1) - \frac{1}{C}m^{\beta}.
        \end{aligned}
\end{equation}
Moreover, evaluating $mD_pH(x, Du, m)$  in the set $\{x\colon m = 0\}$ in the sense of \eqref{a:mDpHat0}, we have for the parameters $\bar\beta>1$ and $\bar\gamma>1$ defined in \eqref{eq:gamma} that
\begin{equation}\label{eq:boundsmDpH+}
        \begin{aligned}
                mD_pH(\cdot, Du, m) \in L^{\bar\gamma'}(\Tt^d;\RR^d)\enspace \hbox{ whenever } \enspace m\in L^{\bar \beta}(\Tt^d;\RR^+_0) \hbox{ and } u\in W^{1,\bar\gamma}(\Tt^d).
        \end{aligned}
\end{equation}
\end{lemma}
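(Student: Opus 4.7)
The approach mirrors Lemma~\ref{lem:ADFGU}: I would derive \eqref{eq:boundsH+} by the fundamental theorem of calculus,
\[H(x,p,m) = H(x,0,m) + \int_0^1 D_pH(x,tp,m)\cdot p\,dt,\]
bounding the first term via \eqref{eq:assH.upper.congestion} and the integrand via \eqref{eq:assH.DpH.upper.congestion}. Setting $\gamma = \alpha(1+\tau/\beta)$, integrating in $t$ produces the desired $\frac{1}{\gamma}\bigl(\tfrac{1}{m}+\tfrac{1}{m^\tau}\bigr)|p|^\gamma$ contribution, together with the lower-order mixed terms $C|p|\,m^{\beta-\beta/\alpha}$ and $C|p|/m$, which still need to be absorbed into the target structure.

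The core step is a pair of Young inequalities. For $C|p|\,m^{\beta-\beta/\alpha}$, I would write this as $\bigl(|p|/m^{\tau/\gamma}\bigr)\cdot m^{\tau/\gamma+\beta-\beta/\alpha}$ and apply Young with exponents $\gamma,\gamma'$. The crucial algebraic identity (using $\gamma-1=(\beta(\alpha-1)+\alpha\tau)/\beta$) is that $(\tau/\gamma+\beta-\beta/\alpha)\gamma' = \beta$ exactly, so the output is $\epsi\,\tfrac{|p|^\gamma}{m^\tau} + C_\epsi\, m^\beta$; choosing the Young parameter small enough that $C_\epsi \leq \tfrac{1}{2C}$ lets the $m^\beta$ remainder be absorbed into the available $-\tfrac{1}{C}m^\beta$, leaving $-\tfrac{1}{2C}m^\beta$. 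For $C|p|/m$, I would factor as $\bigl(|p|/m^{1/\gamma}\bigr)\cdot m^{-1/\gamma'}$ and apply Young directly to obtain $\tfrac{1}{\gamma}\tfrac{|p|^\gamma}{m} + \tfrac{1}{\gamma'}\tfrac{1}{m}$, both of which already appear in the target form. The remaining additive constant from $H(x,0,m)$ is handled by enlarging the outer constant in the final inequality (using that $\tfrac{C'}{m^\tau}+\tfrac{1}{2C'}m^\beta$ has a positive minimum that can be made as large as any prescribed constant by enlarging $C'$).

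For \eqref{eq:boundsmDpH+}, multiplying \eqref{eq:assH.DpH.upper.congestion} by $m$ yields
\[|mD_pH(x,p,m)| \leq C(1+m^{1-\tau})|p|^{\gamma-1} + C(m^{\beta+1-\beta/\alpha}+1),\]
which extends to $m\geq 0$ as noted in Remark~\ref{rmk:implied-a}. Setting $p=Du$ and applying H\"older's inequality, the required $L^{\bar\gamma'}$-integrability follows from the three identities
\[\tfrac{1-\tau}{\bar\beta} + \tfrac{\gamma-1}{\bar\gamma} = \tfrac{1}{\bar\gamma'}, \qquad \tfrac{\gamma-1}{\bar\gamma} \leq \tfrac{1}{\bar\gamma'}, \qquad \tfrac{\beta+1-\beta/\alpha}{\bar\beta} = \tfrac{1}{\bar\gamma'},\]
which are direct consequences of the definitions in \eqref{eq:gamma} (the middle inequality requires $\tau\leq 1$, the only place where this hypothesis enters the exponent bookkeeping).

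The main obstacle is the coordination of the Young inequalities in the first part so that all the mixed $|p|\,m^{\cdot}$ terms collapse exactly into the structured target $C(1/m+1/m^\tau)(|p|^\gamma + 1)$ while preserving a nontrivial negative $m^\beta$ reservoir. The identity $(\tau/\gamma+\beta-\beta/\alpha)\gamma' = \beta$ is the congestion analogue of \eqref{eq:Young.simple.case} in Lemma~\ref{lem:ADFGU}; once it is in hand, the $|p|/m$ term (which is the genuinely new feature relative to the power-growth case) poses no further difficulty because the target bound explicitly accommodates $1/m$ factors.
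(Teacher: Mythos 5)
Your proposal is correct in substance and follows essentially the paper's own route: the fundamental theorem of calculus with \eqref{eq:assH.upper.congestion}--\eqref{eq:assH.DpH.upper.congestion}, a weighted Young inequality whose exponent identity $\bigl(\tfrac{\tau}{\gamma}+\beta-\tfrac{\beta}{\alpha}\bigr)\gamma'=\beta$ (with $\gamma=\alpha(1+\tfrac{\tau}{\beta})$) is exactly the identity underlying \eqref{eq:Young.congestion.case}, and the same two H\"older exponent identities for \eqref{eq:boundsmDpH+}; your direct Young split of $|p|/m$ and your treatment of the leftover constant differ only cosmetically from the paper's use of \eqref{eq:Young-like.congestion}. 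Two small corrections. First, your labels in the key Young step are inverted: to make the $m^\beta$ remainder carry a coefficient at most $\tfrac{1}{2C}$ you must put the small weight on the $m^\beta$ side, which forces a \emph{large} constant on the $|p|^{\gamma}/m^\tau$ side---harmless, since \eqref{eq:boundsH+} allows an arbitrarily large $C$ there, but as written ($\epsi$ on the gradient term, $C_\epsi\leq\tfrac1{2C}$ on $m^\beta$) the two requirements are incompatible. Second, the parenthetical justification for absorbing the additive constant is not literally true: the minimum over $m>0$ of $\tfrac{C'}{m^\tau}+\tfrac{1}{2C'}m^\beta$ scales like $(C')^{(\beta-\tau)/(\beta+\tau)}$, which stays bounded (or even shrinks) when $\tau\geq\beta$, a case permitted since $\tau\in[0,1]$ while $\beta>0$ is arbitrary. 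The fix is immediate: keep a fixed fraction of the $-\tfrac1C m^\beta$ reservoir coming from \eqref{eq:assH.upper.congestion} (do not let its coefficient shrink as you enlarge the outer constant) and enlarge only the $1/m^\tau$ coefficient, so that the relevant minimum of $C'/m^\tau+\epsilon m^\beta$ grows like $(C')^{\beta/(\beta+\tau)}$; alternatively, use the paper's inequality $1\leq \theta^{-1/\beta}m^{-1}+\theta m^\beta$ from \eqref{eq:Young-like.congestion}.
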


\begin{proof} 
        The arguments to prove this lemma are similar to those used to prove Lemma~\ref{lem:ADFGU}, for which reason we only highlight the main steps.

    Using the fundamental theorem of calculus as in Lemma~\ref{lem:ADFGU}, but using \eqref{eq:assH.upper.congestion} and \eqref{eq:assH.DpH.upper.congestion} in place of \eqref{eq:assH.upper.simple} and \eqref{eq:assH.DpH.upper.simple}, we conclude that
\begin{equation}\label{eq:boundsH+1}
    \begin{aligned}
        & H(x,p,m) \leq - \frac{1}{C}m^{\beta} + C +C \left(\frac{1}{m} + \frac{1}{m^\tau}\right)|p|^{{\alpha(1+\frac{\tau}{\beta})}} + C \left( m^{\beta-\frac{\beta}\alpha} + \frac{1}{m} \right)|p|.
    \end{aligned}
\end{equation}
{To estimate the last term in \eqref{eq:boundsH+1}, we first split it using the bound $|p| \leq |p|^{\alpha(1+\frac{\tau}{\beta})} + 1$ to obtain the intermediate estimate
\begin{equation}\label{eq:intermediate-step}
    C \left( m^{\beta-\frac{\beta}\alpha} + \frac{1}{m} \right)|p| \leq C \big( m^{\beta-\frac{\beta}\alpha} + 1 \big)|p| + \frac{C}{m}\big(|p|^{\alpha(1+\frac{\tau}{\beta})} + 1\big).
\end{equation}
Next, for the first term on the right-hand side of \eqref{eq:intermediate-step},}
in a similar way to \eqref{eq:Young.simple.case}, we observe that Young's inequality yields for every $\theta>0$ that
\begin{equation}\label{eq:Young.congestion.case}
    \begin{aligned}
           &|p|(m^{\beta-\frac{\beta}\alpha}+1)\\ & \quad \leq \frac{|p|^{\alpha(1+\frac{\tau}{\beta})}}{\theta^{\alpha(1+\frac{\tau}{\beta})-1} \alpha(1+\frac{\tau}{\beta}) (m^\tau +1)}  + \theta\frac{\alpha(1+\frac{\tau}{\beta})-1}{\alpha(1+\frac{\tau}{\beta})}(m^\tau +1)^{\frac{1}{\alpha(1+\frac{\tau}{\beta})-1}} \big( m^{\beta-\frac{\beta}\alpha}+1\big)^{\frac{\alpha(1+\frac{\tau}{\beta})}{\alpha(1+\frac{\tau}{\beta})-1}}\\
         &\quad\leq \tilde{c}\left(\frac{|p|^{\alpha(1+\frac{\tau}{\beta})}}{\theta^{\alpha(1+\frac{\tau}{\beta})-1}(m^\tau +1)} + \theta (m^\beta+1)\right),
    \end{aligned}
\end{equation}
where $\tilde{c}$ depends only on $\alpha$, $\beta$, and $\tau$, and where we used the identity
\begin{equation*}%\label{eq:bound.beta.tau}
    \frac{\tau}{\alpha(1+\frac{\tau}{\beta})-1}+ \left(\beta - \frac{\beta}{\alpha}\right)\left(\frac{\alpha(1+\frac{\tau}{\beta})}{\alpha(1+\frac{\tau}{\beta})-1}\right)=\beta.
\end{equation*}
{In addition, we have the Young-type inequality
\begin{equation}\label{eq:Young-like.congestion}
    \begin{aligned}
        1 \leq \frac{1}{\theta^{\frac{1}{\beta}}m} + \theta m^\beta.
    \end{aligned}
\end{equation}
Thus, \eqref{eq:boundsH+} follows from combining 
\eqref{eq:boundsH+1} with \eqref{eq:intermediate-step}, \eqref{eq:Young.congestion.case}, and \eqref{eq:Young-like.congestion} using an appropriately small value of $\theta$.}

We are left to prove \eqref{eq:boundsmDpH+}. From \eqref{eq:gamma}, we get that
\begin{equation*}
    \frac{1}{\bar\gamma'} = \frac{1-\tau}{\bar\beta} + \frac{{\alpha(1+\frac{\tau}{\beta})}-1}{\bar\gamma} \quad \text{and} \quad \frac{1}{\bar\gamma'} = \frac{1}{\bar\beta} \Big( \beta + 1 -\frac{\beta}{\alpha}\Big).
\end{equation*}
Thus, multiplying  \eqref{eq:assH.DpH.upper.congestion} by $m$ while noting that it is valid even when $m=0$ by Remark~\ref{rmk:implied-a}, and using H\"older's inequality, we obtain \eqref{eq:boundsmDpH+}.
\end{proof}

\begin{remark}[On condition \eqref{eq:assH.lower.congestion}]\label{rmk:lower.congestion}
    Assume that  $H\colon \Tt^d\times \RR^d \times \RR^+ \to \RR$ satisfies the following bounds for a.e.~$x\in\Tt^d$ and for all $(p,m)\in\RR^d\times\RR^+$: 
\begin{equation}\label{eq:alternative-congestion}
    \begin{aligned}
       &  H(x,0,m) \geq -C(  m^{\beta}+1),\\
        & D_p H(x,p,m) \cdot p \geq \frac{1}{C}\left(\frac{1}{m^\tau+1}\right)|p|^{\alpha(1+\frac{\tau}{\beta})} - C\Big(m^{\beta - \frac{\beta}{\alpha}} + 1\Big)|p|.    
    \end{aligned}
\end{equation}
Then, as we show next, \eqref{eq:assH.lower.congestion} holds. As such, \eqref{eq:alternative-congestion} can be considered an alternative to \eqref{eq:assH.lower.congestion} in the framework of Assumption~\ref{onH.congestiongrowth}; however, \eqref{eq:alternative-congestion} is strictly stronger since it is not implied by Assumption~\ref{onH.monotone} and Assumption~\ref{onH.congestiongrowth} combined.

To show the aforementioned implication, we apply the fundamental theorem of calculus as in Lemma~\ref{lem:ADFGU},  but using \eqref{eq:alternative-congestion} in place of \eqref{eq:assH.upper.simple} and \eqref{eq:assH.DpH.upper.simple}, to get that
\begin{equation}\label{eq:boundsH+1bis}
    \begin{aligned}
        & H(x,p,m) \geq - C(m^\beta + 1) + \frac{1}{{\alpha(1+\frac{\tau}{\beta})} C} \left(\frac{1}{m^\tau + 1}\right)|p|^{\alpha(1+\frac{\tau}{\beta})} -C \big(  m^{\beta-\frac{\beta}\alpha}+1\big) |p|.
    \end{aligned}
\end{equation}
Hence, choosing $\theta$ in \eqref{eq:Young.congestion.case} appropriately, as in the preceding proof, \eqref{eq:assH.lower.congestion} follows from 
\eqref{eq:boundsH+1bis} and \eqref{eq:Young.congestion.case}.
\end{remark}

    %     D_pH(x,p,m)\cdot p - H(x,p,m) \geq C^{-1}|p|^{\alpha} - C(m^{\beta}+1),
    % \end{equation}
    % for some positive constant  $C$, then we can do a suitable convex combination between the preceding inequality and
    % \eqref{eq:DpHdotpminusH-estimate1}     
    % to  conclude \eqref{eq:DpH-H},  with a possibly different constant $C$.
    % Moreover, it is enough to show \eqref{eq:DpHdotpminusH-estimate2} assuming that $|p| \geq c(m^{\frac{\beta}{\alpha}}+1)$ for some positive constant $c$, since it otherwise holds anyway by \eqref{eq:DpHdotpminusH-estimate1}, perhaps after redefining $C$. Specifically, we assume that
    % \[|p|\geq (9C^2(m^{\beta-\frac{\beta}{\alpha}}+1))^{1/(\alpha-1)}\]
    % where $C$ is the constant in \eqref{eq:assH.DpH.upper.simple}--\eqref{eq:assH.upper.simple}, and prove \eqref{eq:DpHdotpminusH-estimate2} for some possibly different positive constant $C$, which we do not relabel.
    % For this, we observe that \eqref{eq:assH.DpH.upper.simple}--\eqref{eq:assH.lower.simple} implies the estimate
    % \[D_pH(x,pt,m)\cdot p \leq \frac{1}{2} D_pH(x,p,m)\cdot p,\]
    % as long as $0\leq t\leq (3C^2)^{-1/(\alpha-1)}$. Consequently, \eqref{eq:DpHdotpminusH-integral-formula} implies that
    % \[D_pH(x,p,m)\cdot p - H(x,p,m) \geq -H(x,0,m) + \frac{1}{2(3C^2)^{1/(\alpha-1)}}D_pH(x,p,m)\cdot p.\]
    % Using \eqref{eq:assH.lower.simple}--\eqref{eq:assH.upper.simple} and redefining $C$, the last estimate yields
    % \[D_pH(x,p,m)\cdot p - H(x,p,m) \geq \frac{1}{C}|p|^{\alpha} - C(m^{\beta-\frac{\beta}\alpha}+1)|p|.\]
    % Then, we conclude \eqref{eq:DpHdotpminusH-estimate2} by Young's inequality.

\subsection{Monotone Operator Theory}\label{subs:monotone}

The proofs of existence of solutions that we present in the subsequent sections rely on an  abstract existence theorem for monotone operators on Banach spaces, Theorem~\ref{thm:monotone.abstract-weak} below. Before stating this theorem, we recall the notion of monotonicity, coercivity, and hemicontinuity used therein (and  Remark~\ref{rmk:hemicontinuous-upgrade} below). These properties are standard in the theory of variational inequalities.

\begin{definition}\label{def:prelim}
    Let \(X\) be a reflexive Banach space, and let \(X'\) denote its dual. Let    \(A\colon D(A)\subset X \to X' \) be an operator on $X$, with convex domain $D(A)$. We say that:
    \begin{enumerate}[label=(\alph*)]
        
    \item $A$ is monotone if we have for all $v_1, v_2\in D(A)$  that \[\langle A[v_1] - A[v_2], v_1 - v_2 \rangle_{X', X} \geq 0.\]

    \item\label{defitem:coercive} $A$ is coercive if there exists $\bar v \in D(A)$ such that
\[\lim_{v\in D(A),\, \Vert v\Vert_X \to \infty} \frac{\langle A[v] - A[\bar v], v - \bar v \rangle_{X', X}}{\Vert v\Vert_X} = +\infty.\]
     
     \item\label{defitem:hemicont} $A$ is hemicontinuous if  we have for all $u, v\in D(A)$ and $w\in X$  that  the real-valued map \[t\mapsto \langle A[(1-t)u + tv], w \rangle_{X', X}\]
     is continuous in the interval $[0,1]$.
    \end{enumerate}
\end{definition}

The cornerstone of our analysis for the congestion and weak-growth cases (Theorems~\ref{thm:main-congestion.growth} and \ref{thm:main-weak.power.growth}) is the following existence theorem. It guarantees solutions to weak variational inequalities under minimal assumptions by extending the classical Debrunner--Flor theorem to unbounded domains.

\begin{theorem}\label{thm:monotone.abstract-weak}
Let \(X\) be a reflexive Banach space, and let \(X'\) denote its dual. Let \(\mathcal{K} \subset X\)  be a nonempty convex subset of \(X\), and assume that the operator \(A\colon \mathcal{K} \to X' \) is monotone and coercive. Then, 
\begin{equation}\label{eq:weakIneqA}
\begin{aligned}
\exists \, u\in  \overline{\mathcal{K}}\colon \quad \forall\, v\in  \mathcal{K}, \enspace \langle A[v], v-u\rangle_{X',X} \geq 0.
\end{aligned}
\end{equation}
\end{theorem}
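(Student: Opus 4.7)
The plan is to combine the classical Debrunner--Flor theorem for monotone operators on weakly compact convex sets with a coercivity-based truncation argument in order to handle the unboundedness of $\mathcal{K}$. Concretely, I would fix the reference point $\bar{v}\in\mathcal{K}$ supplied by the coercivity condition in Definition~\ref{def:prelim} and, for each $n\in\Nn$ with $n>\|\bar{v}\|_X$, introduce the truncation $\mathcal{K}_n := \mathcal{K}\cap\overline{B_n(0)}$; this is a nonempty bounded convex subset of $X$, and its closure $\overline{\mathcal{K}_n}$ is weakly compact by reflexivity (the weak and strong closures of a convex set agree). Applying the Debrunner--Flor theorem to the restriction of $A$ to $\mathcal{K}_n$ would yield $u_n \in \overline{\mathcal{K}_n}$ with
\[
\langle A[v], v-u_n\rangle_{X',X} \geq 0 \qquad \text{for every } v\in \mathcal{K}_n.
\]
The KKM property required for Debrunner--Flor is a direct consequence of monotonicity: for any convex combination $u = \sum_{i=1}^N \lambda_i v_i$ of points $v_i\in \mathcal{K}_n$, summing the nonnegative pairwise quantities $\lambda_i\lambda_j\langle A[v_i]-A[v_j], v_i-v_j\rangle$ precludes $u$ from lying outside every half-space $\{w : \langle A[v_i], v_i-w\rangle \geq 0\}$.

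Next, I would show that $\{u_n\}$ is bounded in $X$ uniformly in $n$, which I expect to be the main technical obstacle. The difficulty is that $u_n$ lies only in the closure $\overline{\mathcal{K}_n}$, not in $\mathcal{K}_n$ itself, so coercivity cannot be applied to $u_n$ directly. The strategy would be: if $\|u_n\|_X$ were unbounded along a subsequence, one selects $w_k\in\mathcal{K}_n$ approximating $u_n$ strongly (possible because $u_n\in\overline{\mathcal{K}_n}$) with $\|w_k\|_X$ correspondingly large, and then combines the Minty inequality $\langle A[w_k], w_k-u_n\rangle\geq 0$ with the monotonicity-based estimate $\langle A[w_k], w_k-\bar{v}\rangle \geq \langle A[\bar{v}], w_k-\bar{v}\rangle$ and the coercive blow-up of $\langle A[w_k]-A[\bar{v}], w_k-\bar{v}\rangle/\|w_k\|_X$ to derive a contradiction. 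A potentially cleaner alternative is to choose a radius $R_0$ a priori from coercivity, large enough so that the coercive term dominates the linear contributions from $A[\bar{v}]$ on $\{\|v\|_X\geq R_0\}\cap\mathcal{K}$, and then verify that every Debrunner--Flor solution on $\mathcal{K}_n$ for $n\geq R_0$ must lie in $\overline{B_{R_0}}$, bypassing the approximation argument entirely.

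Once uniform boundedness is in hand, reflexivity of $X$ delivers a subsequence $u_{n_j}\rightharpoonup u$ with $u\in\overline{\mathcal{K}}$, using that the strong closure of a convex set coincides with its weak closure. For any fixed $v\in\mathcal{K}$, one has $v\in\mathcal{K}_{n_j}$ for all $j$ sufficiently large, so the Minty inequality $\langle A[v], v-u_{n_j}\rangle_{X',X}\geq 0$ is available along the subsequence. Since $A[v]\in X'$ is fixed, the functional $w\mapsto \langle A[v], v-w\rangle_{X',X}$ is weakly continuous on $X$, and passing to the weak limit yields $\langle A[v], v-u\rangle_{X',X}\geq 0$, establishing~\eqref{eq:weakIneqA}. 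Note that hemicontinuity of $A$ is never used in this scheme, consistent with the fact that the theorem produces only the Minty form of the variational inequality with $u$ in the closure rather than in $\mathcal{K}$ itself.
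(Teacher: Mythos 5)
Your overall scheme (truncate to $\mathcal{K}_n=\mathcal{K}\cap\overline{B_n(0)}$, solve the bounded problem, bound $u_n$ uniformly, pass to the weak limit) is sensible, and both your KKM justification and the final limiting step are fine; but the step you yourself identify as the main obstacle --- the uniform bound on $u_n$ --- is not established, and neither of your two strategies closes it. In strategy (a), the three ingredients you propose to combine, namely $\langle A[w_k],w_k-u_n\rangle\geq 0$, the monotonicity estimate $\langle A[w_k],w_k-\bar v\rangle\geq\langle A[\bar v],w_k-\bar v\rangle$, and the coercive blow-up of $\langle A[w_k]-A[\bar v],w_k-\bar v\rangle/\|w_k\|_X$, are all lower bounds pointing the same way; the blow-up of a quantity that is only bounded from below is not a contradiction. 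To contradict coercivity you need an \emph{upper} bound on $\langle A[v]-A[\bar v],v-\bar v\rangle$ at admissible points $v$ of large norm, and the Minty inequality only produces such an upper bound when tested at points $v$ for which $v-u_n$ points back towards $\bar v$, i.e.\ points of the open segment between $\bar v$ and $u_n$. Those points are only known to lie in $\overline{\mathcal{K}_n}$, not in $\mathcal{K}_n$ (for a non-closed convex set, the half-open segment from a point of the set to a point of its closure can leave the set), so they are inadmissible; and if you instead test at $(1-t)\bar v+tw_k\in\mathcal{K}_n$ with $w_k\to u_n$ strongly, the error term $\langle A[(1-t)\bar v+tw_k],\,w_k-u_n\rangle$ is uncontrollable, because $A$ is assumed neither continuous, nor hemicontinuous, nor locally bounded (local boundedness of monotone maps requires interior points of the domain, and in the paper's applications $\mathcal{K}$ has empty interior). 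Your ``cleaner alternative'' (b) hits exactly the same wall: verifying that every solution on $\mathcal{K}_n$ lies in $\overline{B_{R_0}}$ again requires testing on the segment $[\bar v,u_n)$, which need not meet $\mathcal{K}_n$. If $\mathcal{K}$ were closed, your argument would work (then $u_n\in\mathcal{K}_n$ and testing at $\tfrac12(u_n+\bar v)$ is legitimate), but the theorem is stated, and is used, for non-closed $\mathcal{K}$, e.g.\ the congestion domain \eqref{def-domainK} and the test class in Definition~\ref{def:weak}.

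The paper closes precisely this gap by not truncating with balls at all. Setting $\mathcal{S}(v)=\{u\in X\colon\langle A[v],v-u\rangle\geq 0\}$ and using weak compactness of $\bar B_r(0)\cap\overline{\mathcal{K}}$ together with the finite intersection property, it reduces to finitely many test points $v_1,\dots,v_n$ and applies the Debrunner--Flor step only on the finite-dimensional compact polytope $C=\operatorname{co}\{0,v_1,\dots,v_n\}\subset\mathcal{K}$ (after translating so that $\bar v=0$). Since $C$ is compact and contained in $\mathcal{K}$, the resulting point $u$ lies in $C$ itself, so $u/2\in C$ is an admissible test point; the inequality $\langle A[u/2],u/2-u\rangle\geq 0$ gives $\langle A[u/2]-A[0],u/2\rangle\leq\|A[0]\|_{X'}\,\|u/2\|_X$, and coercivity at the admissible point $u/2$ yields the uniform radius $r$, independently of the chosen $v_1,\dots,v_n$. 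This halving trick, performed at a point guaranteed to be admissible, is the idea missing from your proposal; without it, or a substitute for it, your uniform bound, and hence the proof, does not go through.
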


\begin{proof}
    The case of bounded $\mathcal{K}$ is an immediate corollary of the Debrunner--Flor theorem \cite{debrunner-flor}. In this proof, we use this result where $\mathcal{K}$ is replaced by some $C\subset\mathcal{K}$, which is the convex hull of a finite set of points; i.e.,~a finite-dimensional, closed, convex polytope. Moreover, we suppose, without loss of generality, that $0\in\mathcal{K}$ and $A$ is coercive around the point $0\in\mathcal{K}$.

    For $v\in\mathcal{K}$, define
    \[\mathcal{S}(v) := \{u\in X\colon \langle A[v], v-u\rangle_{X',X} \geq 0\}.\]
    To prove \eqref{eq:weakIneqA}, it suffices to show that
    \[\overline{\mathcal{K}} \cap \bigcap_{v\in\mathcal{K}} \mathcal{S}(v) \neq \emptyset.\]
    In fact, we will show that there exists a closed ball, $\bar B_r(0)\subset X$, centered at the origin with radius $r>0$,  such that
    \[\bar B_r(0)\cap \overline{\mathcal{K}} \cap \bigcap_{v\in\mathcal{K}} \mathcal{S}(v) \neq \emptyset.\]

    In the weak topology of $X$, the set $\bar B_r(0)\cap \overline{\mathcal{K}}$ is compact (Banach--Alaoglu theorem) and the sets $\mathcal{S}(v)$ are closed; hence,  by the finite intersection property, the preceding condition follows once we show that \begin{equation}\label{exists.bounded.galerkin}
        B_r(0)\cap \overline{\mathcal{K}} \cap \mathcal{S}(v_1) \cap \ldots \cap \mathcal{S}(v_n) \neq \emptyset
    \end{equation}
    for any finite collection $v_1,\ldots, v_n\in\mathcal{K}$. Let $C\subset\mathcal{K}$
    %\[C := \operatorname{conv}\{0, v_1,\ldots,v_n\}\subset \mathcal{K}\]
    denote the convex hull of $\{0, v_1,\ldots,v_n\}$. As we remarked in the beginning, we have
    \[C\cap \bigcap_{v\in C} \mathcal{S}(v) \neq \emptyset.\]
Let $u\in C\cap \bigcap_{v\in C} \mathcal{S}(v)$. Then, we have $u\in S(u/2)$ since $u\in C$ implies $u/2\in C$.
    To conclude \eqref{exists.bounded.galerkin} from this, we simply observe that $u\in \mathcal{S}(u/2)$ implies $\|u\|_X \leq r$ for some uniformly fixed value of $r$ because of the coercivity of $A$.
\end{proof}

\begin{remark}\label{rmk:hemicontinuous-upgrade}
    In the setting of the previous theorem, assume further that  $u\in\mathcal{K}$ and that $A$ is hemicontinuous. Then, using \eqref{eq:weakIneqA} with $v$ replaced by $(1-t)u+tv$ for $0<t\leq 1$, and subsequently dividing the resulting inequality by $t$ and letting $t$ approach 0, we  conclude that $u$ satisfies the so-called (strong) variational inequality associated with a monotone operator:
\begin{equation*}
\begin{aligned}
\forall\, v\in  \mathcal{K}, \enspace \langle A[u], v-u\rangle_{X',X} \geq 0.
\end{aligned}
\end{equation*}

We further remark that a similar result on the existence of solutions to the strong variational inequality has been established in \cite[Thm.~1.7]{KiSt00}, under the stronger assumption on $A$ of continuity on finite-dimensional subspaces in place of hemicontinuity.
\end{remark}

\subsection{Discussion on the Solution Concepts}\label{subs:solutions}

The concept of weak solutions, as presented in Definition \ref{def:weak}, has its foundations in the weak variational inequality framework developed for monotone operators. Defining an operator $A$ by setting
\begin{equation*}%\label{defA}
        \begin{aligned}
                A \begin{bmatrix}\mu \\
                        \upsilon \\
                \end{bmatrix}
                := \begin{bmatrix}\displaystyle -\upsilon-  H(x,D\upsilon,\mu) + V(x)  \\
                        \mu -  \div(\mu D_pH(x, D\upsilon,\mu)) -1  \\
                \end{bmatrix},
        \end{aligned}
\end{equation*}
whose domain $D(A)$ contains at least all pairs $(\mu,\upsilon)\in L^{\infty}(\Tt^d)\times W^{1,\infty}(\Tt^d)$ with $\essinf\mu > 0$, one can reformulate \eqref{def:weak-solution-concept} as
\begin{equation*}%\label{defvws}
        \begin{aligned}
                \left\langle  \begin{bmatrix}\mu \\
                        \upsilon \\
                \end{bmatrix}
                -  \begin{bmatrix}m \\
                        u \\
                \end{bmatrix}, A \begin{bmatrix}\mu \\
                        \upsilon \\
                \end{bmatrix} \right\rangle_{L^1(\Tt^d) \times
                        W^{1,1}(\Tt^d), L^\infty(\Tt^d) \times
                        W^{-1,\infty}(\Tt^d)} \geq 0.
        \end{aligned}
\end{equation*}
Note that our (standing) Assumption~\ref{onH.monotone} ensures that $A$ is a monotone operator. 

On the other hand, Definition~\ref{def:strong} provides a stronger notion of solution. In our proofs of Theorems~\ref{thm:main-simple.power.growth}~and~\ref{thm:main-congestion.growth}, we first establish the existence of a weak solution $(m,u)$ in the sense of Definition~\ref{def:weak}, albeit with more integrability on $(m,u)$, and such that \eqref{def:weak-solution-concept} holds for $(\mu,\upsilon)$ with less integrability. Then, we prove that $(m,u)$ satisfies \eqref{def:hjb-strong} and \eqref{def:transport-strong} by judicious choices of test functions $(\mu,\upsilon)$ in \eqref{def:weak-solution-concept}. It is also noteworthy that Definition~\ref{def:weak} already allows for less regular test functions than  earlier papers addressing existence of weak solutions obtained by monotonicity methods \cite{ FG2,FGT1,FeGoTa21}; this reduced regularity gap between the solution and the test functions brings the notion of weak solutions in this manuscript closer to that of strong solutions.

We now justify that a solution in the sense of Definition~\ref{def:strong} is also a solution in the sense of Definition~\ref{def:weak}, provided that there exists $q>1$ for which
\[m\in L^{q'}(\Tt^d;\RR^+_0),\quad mD_pH(x,Du,m)\in L^{q'}(\Tt^d;\RR^d), \quad u\in W^{1,q}(\Tt^d).\]

Indeed, Assumption~\ref{onH.monotone} yields
\[\begin{aligned}
&\big(- H(x,D\upsilon,\mu) +  H(x,Du,m)\big) (\mu - m)\\ &\quad + 
\big( \mu D_p H(x,D\upsilon,\mu) - m D_p H(x,Du,m)\big) \cdot (D\upsilon-Du) \geq 0\end{aligned}\]
a.e.~on $\Tt^d$. On the other hand, \eqref{def:hjb-strong} implies
\[H(x,Du,m)(\mu-m) \leq -(u-V(x))(\mu-m)\]
a.e.~on $\Tt^d$. Combining and rearranging, we get
\[\begin{aligned}
& \big(  -\upsilon- H(x, D\upsilon,\mu) + V(x)  \big)(\mu
\, -m) + \mu D_p H(x, D\upsilon,\mu) \cdot(D\upsilon - Du)  + (\mu-1)( \upsilon-u) \\
& \qquad \geq (m-1)(\upsilon-u) + mD_pH(x,Du,m)\cdot (D\upsilon-Du)
\end{aligned}\]
a.e.~on $\Tt^d$. Now, integrating both sides and using \eqref{def:transport-strong} with $\varphi = \upsilon-u$, and taking into account that the right-hand side is integrable, we conclude that \eqref{def:weak-solution-concept} holds.

\section{Proof of Theorem~\ref{thm:main-simple.power.growth}} 
\label{sec:proof-power-growth}

Here, we establish our first main result: under the power-growth conditions of Assumption~\ref{onH.powergrowth}, the stationary MFG system \eqref{smfg} admits a strong solution. 
Let $\mathcal K
=
L^{\bar\beta}(\Tt^d;\RR^+_0)\times W^{1,\bar\gamma}(\Tt^d)$.
For each $\varepsilon>0$, we introduce a coercive Banach-space operator
\[
A_\varepsilon
\;:\;
\mathcal K
\;\to\;
L^{\bar\beta'}(\Tt^d)\times W^{-1,\bar\gamma'}(\Tt^d),
\]
defined in \eqref{defAepsi} by adding a $\bar\gamma$-Laplacian regularization to the original MFG operator. We then solve the associated variational inequality via Theorem~\ref{thm:monotone.abstract-weak}, derive uniform a priori estimates independent of~$\varepsilon$, and pass to the limit $\varepsilon\to0$ using compactness and Minty's method, thereby recovering a strong solution of \eqref{smfg}. In contrast to earlier Hilbert-space techniques, which rely on higher-order smoothing and yield only weak solutions, our Banach-space approach produces strong solutions. In subsequent sections, we adapt the same strategy, suitably modified, to handle the singularities arising in congestion models as well as weaker growth conditions. 

\medskip

Based on the hypotheses of Theorem~\ref{thm:main-simple.power.growth}, Remarks~\ref{rmk:cxty} and~\ref{rmk:implied-a}, Lemma~\ref{lem:ADFGU}, and the fact that $V\in L^{\bar\beta'}(\Tt^d)$, we define the regularized operator
\[
A_\varepsilon
\;:\;
\mathcal K
\;\to\;
L^{\bar\beta'}(\Tt^d)\times W^{-1,\bar\gamma'}(\Tt^d),
\]
by setting, for all $(m,u)\in \mathcal K$ and $(\eta,\nu)\in L^{\bar\beta}(\Tt^d)\times W^{1,\bar\gamma}(\Tt^d)$,
\begin{equation}\label{defAepsi}
\begin{aligned}
\big\langle A_\varepsilon[m,u],(\eta,\nu)\big\rangle
&=\int_{\Tt^d}\bigl(-u -H(x,Du,m)+V(x)\bigr)\,\eta\,dx\\
&\quad+\int_{\Tt^d}\bigl(m\,D_pH(x,Du,m)\cdot D\nu + (m-1)\,\nu\bigr)\,dx\\
&\quad+\varepsilon\int_{\Tt^d}\bigl(|Du|^{\bar\gamma-2}Du\cdot D\nu
+|u|^{\bar\gamma-2}u\,\nu\bigr)\,dx.
\end{aligned}
\end{equation}
The last term on the right-hand side of \eqref{defAepsi} is the G\^ateaux derivative of the map $u\mapsto \frac{1}{\bar \gamma}\|u\|_{W^{1,\bar\gamma}}^{\bar\gamma}$ and ensures that $A_\varepsilon$ is coercive for $\varepsilon>0$. We begin by verifying this coercivity (Definition~\ref{def:prelim}\ref{defitem:coercive}).

\begin{proposition}[Coercivity of \(A_\epsi\)]\label{prop:coercAepsi} 
Under the assumptions of Theorem~\ref{thm:main-simple.power.growth},
the operator $A_\epsi$ in \eqref{defAepsi} is coercive for any $\epsi>0$.
\end{proposition}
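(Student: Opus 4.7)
The plan is to verify Definition~\ref{def:prelim}\ref{defitem:coercive} with the baseline $(\bar m,\bar u) = (0,0)\in\mathcal{K}$, which is admissible since $0\in L^{\bar\beta}(\Tt^d;\RR^+_0)$ and the values $H(x,0,0)$ and $0\cdot D_p H(x,0,0)=0$ are well defined in the sense of \ref{a:Hat0} and \ref{a:mDpHat0} (cf.~Remark~\ref{rmk:implied-a}). I would equip $\mathcal{K}$ with the natural norm $\|(m,u)\|_\mathcal{K}=\|m\|_{L^{\bar\beta}}+\|u\|_{W^{1,\bar\gamma}}$ and aim to produce a quantitative lower bound that is super-linear in $\|(m,u)\|_\mathcal{K}$.

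The key structural observation is the cancellation that occurs when evaluating $\langle A_\epsi[m,u],(m,u)\rangle$ from \eqref{defAepsi}: the HJB contribution produces $-\int um\,dx$ while the transport contribution produces $+\int mu\,dx$, so the $um$ cross terms annihilate. What remains is
\[
\langle A_\epsi[m,u],(m,u)\rangle = \int_{\Tt^d} m\bigl(D_pH(x,Du,m)\cdot Du - H(x,Du,m)\bigr)dx + \int_{\Tt^d} Vm\,dx - \int_{\Tt^d} u\,dx + \epsi\|u\|_{W^{1,\bar\gamma}}^{\bar\gamma}.
\]
Invoking \eqref{eq:alternative2-simple} from Remark~\ref{rmk:lower.simple} (valid under Assumption~\ref{onH.powergrowth}), the first integrand is bounded below by $\tfrac{1}{C}m^{\bar\beta}-Cm$, producing the crucial superlinear term $\tfrac{1}{C}\|m\|_{L^{\bar\beta}}^{\bar\beta}$ while the $\epsi$-regularization already supplies $\epsi\|u\|_{W^{1,\bar\gamma}}^{\bar\gamma}$.

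To finish, I would apply H\"older's inequality to estimate $|\int Vm|\leq \|V\|_{L^{\bar\beta'}}\|m\|_{L^{\bar\beta}}$ and $|\int u|\leq C\|u\|_{W^{1,\bar\gamma}}$ (using the finite measure of $\Tt^d$ and the embedding $W^{1,\bar\gamma}(\Tt^d)\hookrightarrow L^1(\Tt^d)$), and analogously bound $|\langle A_\epsi[0,0],(m,u)\rangle|$, whose only contributions are $\int(-H(x,0,0)+V(x))m\,dx$ and $-\int u\,dx$; here the uniform bound $|H(x,0,0)|\leq C$ follows from \eqref{eq:assH.upper.simple} together with \eqref{eq:assH.lower.simple} evaluated at $(p,m)=(0,0)$. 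Since $\bar\beta,\bar\gamma>1$, Young's inequality absorbs every linear-in-norm term into the superlinear contributions $\tfrac{1}{2C}\|m\|_{L^{\bar\beta}}^{\bar\beta}$ and $\tfrac{\epsi}{2}\|u\|_{W^{1,\bar\gamma}}^{\bar\gamma}$, yielding
\[
\langle A_\epsi[m,u]-A_\epsi[0,0],(m,u)\rangle \geq c_\epsi\bigl(\|m\|_{L^{\bar\beta}}^{\bar\beta}+\|u\|_{W^{1,\bar\gamma}}^{\bar\gamma}\bigr)-C_{\epsi,V}.
\]
Dividing by $\|(m,u)\|_\mathcal{K}$ and using $\min\{\bar\beta,\bar\gamma\}>1$ then forces the ratio to $+\infty$ as $\|(m,u)\|_\mathcal{K}\to\infty$.

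There is no serious analytic obstacle; the subtlety lies in recognizing the structural cancellation of the $um$ cross terms, which is precisely what allows the Legendre-type bound on $D_pH\cdot p-H$ to produce $m^{\bar\beta}$-coercivity. The $\bar\gamma$-Laplacian regularization is indispensable for controlling $u$, since without it the only bound on $u$ would come from the constant term $-\int u\,dx$, which yields no coercive information.
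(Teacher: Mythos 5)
Your proposal is correct and follows essentially the same route as the paper: verify Definition~\ref{def:prelim}\ref{defitem:coercive} at a fixed admissible baseline, exploit the cancellation of the $u$--$m$ cross terms, use the Legendre-type lower bound on $D_pH\cdot p-H$ (the paper invokes Lemma~\ref{lem:A1+anyother} rather than \eqref{eq:alternative2-simple}, but either suffices) to produce the $\frac{1}{C}\Vert m\Vert_{L^{\bar\beta}}^{\bar\beta}$ term, and let the $\epsi$-regularization supply $\epsi\Vert u\Vert_{W^{1,\bar\gamma}}^{\bar\gamma}$, absorbing the remaining linear terms by H\"older and Young. The only difference is your choice of baseline $(0,0)$ instead of the paper's $(1,0)$ (whose test pair $(m-1,u)$ makes the $V$-terms cancel outright), a harmless variation since $V\in L^{\bar\beta'}(\Tt^d)$ and $|H(x,0,0)|\leq C$ let you absorb those contributions anyway.
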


\begin{proof}
For any $(m,u) \in \mathcal K$, we have
\begin{equation}\label{eq:coercive-test.Aepsi}
                \begin{aligned}
                        &\langle A_\epsi[m,u]
                        - A_\epsi[1,0] , (m-1,u)  \rangle\\
            &\quad = \int_{\Tt^d} \Big[ m\big( -H(x, Du,m) + D_p H(x, Du,m) \cdot Du\big) + H(x, Du,m) + \epsi |D u|^{\bar\gamma} + \epsi |u|^{\bar\gamma } \\
                        &\qquad  + \, m H(x, 0,1) - H(x, 0,1) - D_p H(x, 0, 1) \cdot Du \Big]\, dx.
                \end{aligned}
        \end{equation}
We estimate the first term on the right-hand side of the preceding
identity by \eqref{eq:DpHdotpminusH-estimate1} of Lemma~\ref{lem:A1+anyother}, the second term by \eqref{eq:assH.lower.simple}, and the terms
in the last line by noting that $|H(x,0,1)| \leq C$ and $|D_pH(x,0,1)| \leq C$  by \eqref{eq:assH.upper.simple}-\eqref{eq:assH.DpH.upper.simple}-\eqref{eq:assH.lower.simple}, while keeping Remark~\ref{rmk:implied-a} in mind. Combining these estimates with Young's inequality, we obtain
 \begin{equation}\label{eq:coercive.Aepsi}
                \begin{aligned}
                        &\langle A_\epsi[m,u]
                        - A_\epsi[1,0] , (m-1,u)  \rangle\\
                        &\quad\geq \int_{\Tt^d}\bigg(  \frac{1}{C} \Big(
            m^{\beta+1} +  |Du|^\alpha\Big)    
            + \epsi |Du|^{\bar\gamma} + 
                        \epsi | u|^{\bar\gamma}  - C (m + m^\beta + |Du|) - C\bigg)\,
                        dx \\ 
                        &\quad\geq \int_{\Tt^d}\bigg(  \frac{1}{C} \Big(
            m^{\beta+1} +  |Du|^\alpha\Big)  
            + \epsi |Du|^{\bar\gamma} + 
                        \epsi | u|^{\bar\gamma}  - C  \bigg)\,
                        dx \\
                        &\quad\geq \frac{1}{C} \Vert m\Vert_{L^{\bar\beta}(\Tt^d)}^{\bar\beta} + \epsi \Vert u\Vert_{W^{1,\bar\gamma}(\Tt^d)}^{\bar\gamma}
            - C.  
                \end{aligned}
        \end{equation} 
%where we also used the convention on constants  in Remark~\ref{rmk:onC}.
From the preceding estimate, we conclude that 
\begin{equation*}%\label{eq:coercAepsi}
                \begin{aligned}
                        \lim_{\Vert (m,u)\Vert_{L^{\bar\beta}(\Tt^d) \times W^{1,\bar\gamma}(\Tt^d)} \to+\infty \atop
                                (m,u) \in L^{\bar\beta}(\Tt^d;\RR^+_0) \times W^{1,\bar\gamma}(\Tt^d) }\frac{\langle A_\epsi[m,u] - A_\epsi[1, 0] , (m-1,u)  \rangle}{\Vert (m,u)\Vert_{L^{\bar\beta}(\Tt^d) \times W^{1,\bar\gamma}(\Tt^d)}} =+ \infty
                \end{aligned}
        \end{equation*}
for any $\epsi>0$, which shows that $A_\epsi$ is coercive.
\end{proof}

\begin{lemma}[Hemicontinuity of $A_\varepsilon$]\label{lem:hemicont}
Under the assumptions of Theorem~\ref{thm:main-simple.power.growth}, the map
\[
A_\varepsilon
\;:\;
\mathcal K
\;\longrightarrow\;
L^{\bar\beta'}(\Tt^d)\times W^{-1,\bar\gamma'}(\Tt^d)
\]
defined in \eqref{defAepsi} is hemicontinuous in the sense of Definition~\ref{def:prelim}\ref{defitem:hemicont}.  That is, for any
\[
(\mu,\upsilon),\;(\bar\mu,\bar\upsilon)\in \mathcal K
\quad\text{and}\quad
(\eta,\nu)\in L^{\bar\beta}(\Tt^d)\times W^{1,\bar\gamma}(\Tt^d),
\]
the map
\[
t\;\longmapsto\;
\bigl\langle A_\varepsilon\bigl[(1-t)(\bar\mu,\bar\upsilon)+t(\mu,\upsilon)\bigr],(\eta,\nu)\bigr\rangle
\]
is continuous on $[0,1]$.
\end{lemma}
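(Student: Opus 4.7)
The plan is to apply the dominated convergence theorem term-by-term to the expression in \eqref{defAepsi}. Set $(m_t,u_t):=(1-t)(\bar\mu,\bar\upsilon)+t(\mu,\upsilon)$, so $m_t\in L^{\bar\beta}(\Tt^d;\RR^+_0)$ and $u_t\in W^{1,\bar\gamma}(\Tt^d)$, with norms uniformly bounded for $t\in[0,1]$. I would treat each of the five summands of $\langle A_\varepsilon[m_t,u_t],(\eta,\nu)\rangle$ separately: establish pointwise (in $x$) continuity of the integrand in $t$, and construct a $t$-uniform dominant that pairs with $(\eta,\nu)$ to give an $L^1$ function; assembling the five DCT applications then yields the claim.

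The affine contributions $\int_{\Tt^d}(-u_t+V)\eta\,dx$ and $\int_{\Tt^d}(m_t-1)\nu\,dx$ are linear in $t$, hence continuous immediately. For the Hamiltonian term $\int_{\Tt^d} H(x,Du_t,m_t)\eta\,dx$, pointwise continuity follows from (\ref{a:regularityH}), with the value at $m_t(x)=0$ interpreted via (\ref{a:Hat0}); Remark~\ref{rmk:implied-a} ensures that \eqref{eq:boundsH} and \eqref{eq:assH.lower.simple} remain valid at $m=0$ under Assumption~\ref{onH.powergrowth}. Combining these two bounds yields $|H(x,p,m)|\leq C(|p|^\alpha+m^\beta+1)$, and the elementary inequality $(a+b)^r\leq 2^{r-1}(a^r+b^r)$ (or subadditivity when $r\leq 1$) upgrades it to a $t$-uniform majorant in terms of $|D\bar\upsilon|$, $|D\upsilon|$, $\bar\mu$, and $\mu$, which pairs with $\eta\in L^{\bar\beta}$ to give an $L^1$ dominant via H\"older, using $\bar\gamma=\alpha\bar\beta'$ and $\bar\beta=\beta+1$.

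The main obstacle is the momentum term $\int_{\Tt^d} m_t D_pH(x,Du_t,m_t)\cdot D\nu\,dx$, because $D_pH(x,p,m)$ need not be defined at $m=0$. This is resolved through the product extension stipulated in (\ref{a:mDpHat0}), which by Remark~\ref{rmk:implied-a} is automatic under Assumption~\ref{onH.powergrowth}: the map $(p,m)\mapsto mD_pH(x,p,m)$ extends continuously to $\RR^d\times\RR^+_0$ for a.e.\ $x$, yielding pointwise $t$-continuity of the integrand. Multiplying \eqref{eq:assH.DpH.upper.simple} by $m_t$ and using the bounds $m_t\leq\bar\mu+\mu$ and $|Du_t|^{\alpha-1}\leq C(|D\bar\upsilon|^{\alpha-1}+|D\upsilon|^{\alpha-1})$ gives a $t$-uniform pointwise majorant whose product with $|D\nu|$ is in $L^1$ by H\"older with exponents $\bar\beta$, $\bar\gamma/(\alpha-1)$, and $\bar\gamma$, whose reciprocals sum to $1$, exactly as exploited in the proof of Lemma~\ref{lem:ADFGU}.

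Finally, for the $p$-Laplacian regularization, pointwise continuity of $t\mapsto |Du_t|^{\bar\gamma-2}Du_t\cdot D\nu+|u_t|^{\bar\gamma-2}u_t\,\nu$ is immediate with the standard convention $|y|^{\bar\gamma-2}y=0$ at $y=0$ (continuous on $\RR^d$ since $\bar\gamma>1$), and the dominant $C(|D\bar\upsilon|+|D\upsilon|)^{\bar\gamma-1}|D\nu|+C(|\bar\upsilon|+|\upsilon|)^{\bar\gamma-1}|\nu|$ lies in $L^1$ by H\"older with conjugate exponents $\bar\gamma'$ and $\bar\gamma$. Since the five DCT limits all exist, the map $t\mapsto \langle A_\varepsilon[m_t,u_t],(\eta,\nu)\rangle$ is continuous on $[0,1]$, which is the required hemicontinuity.
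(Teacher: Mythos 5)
Your proof is correct and follows essentially the same route as the paper's: both verify pointwise continuity in $t$ of each integrand (using (\ref{a:regularityH})--(\ref{a:mDpHat0}) and Remark~\ref{rmk:implied-a} to handle the set $\{m_t=0\}$) and then apply dominated convergence, with the dominants coming from the growth bounds of Assumption~\ref{onH.powergrowth} and Lemma~\ref{lem:ADFGU}. The only difference is that you spell out the dominants and the H\"older exponent bookkeeping explicitly, which the paper leaves implicit.
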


\begin{proof}
Let $t\in[0,1]$, set 
\[
(\mu_t,\upsilon_t)
:=
(1-t)(\bar\mu,\bar\upsilon)+t(\mu,\upsilon),
\]
and fix a test pair $(\eta,\nu)\in L^{\bar\beta}(\Tt^d)\times W^{1,\bar\gamma}(\Tt^d)$.  By \eqref{defAepsi},
\[
\begin{aligned}
\bigl\langle A_\varepsilon[\mu_t,\upsilon_t],(\eta,\nu)\bigr\rangle
&=
\int_{\Tt^d}
\bigl(-\upsilon_t-H\bigl(x,D\upsilon_t,\mu_t\bigr)+V(x)\bigr)\,\eta\,dx
\\
&\quad
+\int_{\Tt^d}\bigl(\mu_t\,D_pH(x,D\upsilon_t,\mu_t)\cdot D\nu
+(\mu_t-1)\,\nu\bigr)\,dx
\\
&\quad
+\varepsilon\int_{\Tt^d}
\bigl(|D\upsilon_t|^{\bar\gamma-2}D\upsilon_t\cdot D\nu
+|\upsilon_t|^{\bar\gamma-2}\upsilon_t\,\nu\bigr)\,dx.
\end{aligned}
\]
By Remarks~\ref{rmk:cxty} and~\ref{rmk:implied-a}, the conventions in \eqref{a:Hat0}--\eqref{a:mDpHat0}, and the growth bounds of Assumption~\ref{onH.powergrowth} together with Lemma~\ref{lem:ADFGU}, each integrand above is (pointwise) continuous in $t$ and dominated by a fixed integrable function.  Hence, Lebesgue's dominated convergence theorem yields that
\[
t\;\longmapsto\;
\bigl\langle A_\varepsilon[\mu_t,\upsilon_t],(\eta,\nu)\bigr\rangle
\]
is continuous on $[0,1]$, as required.
\end{proof}

\begin{proposition}[Existence of a Regularized Solution]\label{thm:strongAepsi}
Under the assumptions of Theorem~\ref{thm:main-simple.power.growth}, for each $\varepsilon>0$ there exists
\[
(m_\varepsilon,u_\varepsilon)\in\mathcal K
\]
such that
\begin{equation}\label{eq:Aepsistrongineq}
\bigl\langle A_\varepsilon[m_\varepsilon,u_\varepsilon],
(\mu,\upsilon)-(m_\varepsilon,u_\varepsilon)\bigr\rangle
\;\geq\;0,
\quad\forall\,(\mu,\upsilon)\in\mathcal K.
\end{equation}
\end{proposition}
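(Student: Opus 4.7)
The plan is to apply the abstract existence result Theorem~\ref{thm:monotone.abstract-weak} to $A_\varepsilon$ on $\mathcal{K}$ inside the reflexive Banach space $X=L^{\bar\beta}(\Tt^d)\times W^{1,\bar\gamma}(\Tt^d)$, and then to sharpen its weak variational inequality into the strong form \eqref{eq:Aepsistrongineq} via Remark~\ref{rmk:hemicontinuous-upgrade}. Since $\mathcal{K}$ is a nonempty convex subset of $X$, coercivity of $A_\varepsilon$ is Proposition~\ref{prop:coercAepsi}, and hemicontinuity is Lemma~\ref{lem:hemicont}, the only remaining hypothesis to verify is monotonicity.

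For monotonicity, I would expand $\langle A_\varepsilon[m_1,u_1]-A_\varepsilon[m_2,u_2],(m_1-m_2,u_1-u_2)\rangle$ from \eqref{defAepsi} for arbitrary $(m_1,u_1),(m_2,u_2)\in\mathcal{K}$. The key algebraic observation is that the term $-(u_1-u_2)(m_1-m_2)$ arising from the Hamilton--Jacobi part exactly cancels the term $(m_1-m_2)(u_1-u_2)$ coming from the transport part, so the result splits into the MFG contribution
\[\int_{\Tt^d}\bigl(H(x,Du_2,m_2)-H(x,Du_1,m_1)\bigr)(m_1-m_2)\,dx+\int_{\Tt^d}\bigl(m_1 D_pH(x,Du_1,m_1)-m_2 D_pH(x,Du_2,m_2)\bigr)\cdot(Du_1-Du_2)\,dx,\]
which is pointwise non-negative by Assumption~\ref{onH.monotone}, plus the $\bar\gamma$-Laplacian contribution
\[\varepsilon\int_{\Tt^d}\bigl(|Du_1|^{\bar\gamma-2}Du_1-|Du_2|^{\bar\gamma-2}Du_2\bigr)\cdot(Du_1-Du_2)\,dx+\varepsilon\int_{\Tt^d}\bigl(|u_1|^{\bar\gamma-2}u_1-|u_2|^{\bar\gamma-2}u_2\bigr)(u_1-u_2)\,dx,\]
which is non-negative by the classical monotonicity of $z\mapsto|z|^{\bar\gamma-2}z$. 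Thus $A_\varepsilon$ is monotone.

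With monotonicity, coercivity, and hemicontinuity established, Theorem~\ref{thm:monotone.abstract-weak} produces $(m_\varepsilon,u_\varepsilon)\in\overline{\mathcal{K}}$ satisfying the weak variational inequality $\langle A_\varepsilon[\mu,\upsilon],(\mu,\upsilon)-(m_\varepsilon,u_\varepsilon)\rangle\geq0$ for all $(\mu,\upsilon)\in\mathcal{K}$. Because $\mathcal{K}$ is the product of the closed convex cone of non-negative $L^{\bar\beta}$-functions with the full space $W^{1,\bar\gamma}(\Tt^d)$, it is norm-closed in $X$, hence $(m_\varepsilon,u_\varepsilon)\in\mathcal{K}$. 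Invoking Remark~\ref{rmk:hemicontinuous-upgrade} then upgrades the weak inequality to the strong form \eqref{eq:Aepsistrongineq}. I do not expect a serious technical obstacle here; the only real subtlety is the structural cancellation that makes Assumption~\ref{onH.monotone} directly applicable to the coupled Hamilton--Jacobi/transport pair, together with checking that $\overline{\mathcal{K}}=\mathcal{K}$ so that Remark~\ref{rmk:hemicontinuous-upgrade} is indeed available.
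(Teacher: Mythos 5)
Your proposal is correct and follows essentially the same route as the paper: verify monotonicity (via the cancellation of the coupling terms and Assumption~\ref{onH.monotone}, plus monotonicity of the $\bar\gamma$-power terms), coercivity (Proposition~\ref{prop:coercAepsi}), apply Theorem~\ref{thm:monotone.abstract-weak} on the closed convex set $\mathcal{K}$, and upgrade to the strong inequality via hemicontinuity and Remark~\ref{rmk:hemicontinuous-upgrade}. The paper merely compresses the monotonicity check by citing Assumption~\ref{onH.monotone} for $A_0$ and noting that the $\varepsilon$-term is the G\^ateaux derivative of a convex functional, which is the same computation you spell out.
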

\begin{proof}
Fix $\varepsilon>0$.  
By Assumptions~\ref{onH.monotone} and \ref{onH.powergrowth}, and recalling Remarks~\ref{rmk:cxty} and~\ref{rmk:implied-a}, the operator
$A_0$ (i.e.\ \eqref{defAepsi} with $\varepsilon=0$)
is monotone on 
$
\mathcal K$.
Since $u\mapsto\|u\|_{W^{1,\bar\gamma}}^{\bar\gamma}$ is convex, its G\^ateaux derivative (the $\varepsilon$-regularization in \eqref{defAepsi}) is also monotone.  Hence, $A_\varepsilon$ is monotone and, by Proposition~\ref{prop:coercAepsi}, it is coercive.  

Since $\mathcal{K}$ is convex and closed, by Theorem~\ref{thm:monotone.abstract-weak}, there exists 
\[
(m_\varepsilon,u_\varepsilon)\in\mathcal K
\]
satisfying the weak variational inequality
\begin{equation}\label{eq:Aepsi.weak.ineq}
    \bigl\langle A_\varepsilon[ \mu,\upsilon],
(\mu,\upsilon)-(m_\varepsilon,u_\varepsilon)\bigr\rangle
\;\geq\;0,
\quad\forall\,(\mu,\upsilon)\in\mathcal K.
\end{equation}

To upgrade to the strong form \eqref{eq:Aepsistrongineq}, we invoke hemicontinuity.  By Lemma~\ref{lem:hemicont}, $A_\varepsilon$ is hemicontinuous on $\mathcal K$, therefore Remark~\ref{rmk:hemicontinuous-upgrade} applies. Hence, 
$
(m_\varepsilon,u_\varepsilon)
$
also satisfies the strong variational inequality \eqref{eq:Aepsistrongineq}. 
\end{proof}

We now show that any pair \((m,u)\) in the appropriate function spaces that satisfies the variational inequality  for \(A_\epsi\) as in \eqref{eq:Aepsistrongineq}, 
yields a strong solution of the corresponding regularized MFG system; namely, it
satisfies the Hamilton--Jacobi inequality (with equality on $\{m>0\}$) and the weak transport equation.

\begin{lemma}[Strong solution property] \label{lem:strong.is.strong}
    Under the assumptions of Theorem~\ref{thm:main-simple.power.growth}, let $\epsi\geq 0$ and  $(m, u)\in \mathcal K$ be such that
\begin{equation}\label{eq:Aepsi.strong-ineq}
    \langle A_\epsi[m,u], (\mu-m, \upsilon-u) \rangle \geq 0
\end{equation}
    for all $(\mu, \upsilon)\in L^{\bar\beta}(\Tt^d;\RR^+_0) \times W^{1,\bar\gamma}(\Tt^d)$.
    Then, we have
    \begin{align}
            & u+ H(x,Du,m)- V(x) \leq 0 \qquad \text{a.e.~in~} \Tt^d,\label{eq:HJleq0}\\
            & u+ H(x,Du,m)- V(x) = 0 \qquad \text{a.e.~in~} \{x\in\Tt^d\colon m>0\}. \label{eq:HJ0}
    \end{align}
    Moreover, the regularized transport equation holds in the weak sense; that is, 
    \begin{equation}\label{eq:strong-transport}
        \int_{\Tt^d} \big[ (m-1)\varphi + mD_pH(x,Du,m)\cdot D\varphi + \epsi|Du|^{\bar\gamma-2}Du\cdot D\varphi + \epsi |u|^{\bar\gamma-2}u\varphi\big]\, dx = 0
    \end{equation}
    for all $\varphi\in W^{1,\bar\gamma}(\Tt^d)$.
\end{lemma}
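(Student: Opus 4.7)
The plan is to deduce the three assertions by choosing appropriate test pairs $(\mu,\upsilon)\in L^{\bar\beta}(\Tt^d;\RR^+_0)\times W^{1,\bar\gamma}(\Tt^d)$ in \eqref{eq:Aepsi.strong-ineq} and exploiting the fact that $A_\epsi[m,u]$, evaluated at the given pair, is a fixed linear functional on $L^{\bar\beta}(\Tt^d)\times W^{1,\bar\gamma}(\Tt^d)$.

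For the regularized transport equation \eqref{eq:strong-transport}, I would fix $\mu=m$, so that the first integrand in \eqref{defAepsi} drops out, and take $\upsilon=u+t\varphi$ for arbitrary $\varphi\in W^{1,\bar\gamma}(\Tt^d)$ and $t\in\RR$. Admissibility of $(m,u+t\varphi)\in\mathcal K$ is immediate, and substituting into \eqref{eq:Aepsi.strong-ineq} yields $tJ(\varphi)\geq 0$, where $J(\varphi)$ is precisely the left-hand side of \eqref{eq:strong-transport}. Varying the sign of $t$ forces $J(\varphi)=0$.

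For the Hamilton--Jacobi inequality \eqref{eq:HJleq0}, I would fix $\upsilon=u$, so that the $\bar\gamma$-Laplacian and transport contributions vanish, and take $\mu=m+\phi$ for an arbitrary non-negative $\phi\in L^{\bar\beta}(\Tt^d;\RR^+_0)$; admissibility holds automatically since $\mu\geq 0$. Inequality \eqref{eq:Aepsi.strong-ineq} then reduces to
\[
\int_{\Tt^d}\bigl(-u-H(x,Du,m)+V(x)\bigr)\phi\,dx\;\geq\;0
\qquad\forall\,\phi\in L^{\bar\beta}(\Tt^d;\RR^+_0).
\]
The weight $-u-H(x,Du,m)+V(x)$ is integrable because $V\in L^{\bar\beta'}(\Tt^d)$, $u\in W^{1,\bar\gamma}(\Tt^d)$, and $H(\cdot,Du,m)\in L^{\bar\beta'}(\Tt^d)$ by the two-sided bound obtained from \eqref{eq:boundsH} and \eqref{eq:assH.lower.simple} together with $m\in L^{\bar\beta}(\Tt^d)$ and $|Du|\in L^{\bar\gamma}(\Tt^d)$. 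Testing against $\phi=\mathbf 1_E$ for arbitrary measurable $E\subset\Tt^d$ then yields the pointwise bound \eqref{eq:HJleq0} by a standard Lebesgue-differentiation argument.

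For \eqref{eq:HJ0}, I would keep $\upsilon=u$ and choose $\mu=(1+t)m$ for small $t$ of either sign (the admissibility constraint $(1+t)m\geq 0$ allows both signs near $0$), which gives
\[
t\int_{\Tt^d}\bigl(-u-H(x,Du,m)+V(x)\bigr)m\,dx\;\geq\;0.
\]
Switching the sign of $t$ forces the integral to vanish; since the integrand is a.e.\ non-negative by \eqref{eq:HJleq0} combined with $m\geq 0$, it must be zero a.e., which is exactly \eqref{eq:HJ0}. I do not foresee a substantive obstacle: the entire argument reduces to a careful choice of test data, with the only delicate step being the verification of integrability of the various terms, which is provided by Lemma~\ref{lem:ADFGU} and the hypothesis $V\in L^{\bar\beta'}(\Tt^d)$.
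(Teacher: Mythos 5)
Your proposal is correct and follows essentially the same route as the paper: all three assertions are obtained by substituting judicious test pairs into the strong variational inequality \eqref{eq:Aepsi.strong-ineq}, with the integrability of the weight $-u-H(x,Du,m)+V(x)$ guaranteed by Lemma~\ref{lem:ADFGU} and $V\in L^{\bar\beta'}(\Tt^d)$. The only (harmless) variation is in \eqref{eq:HJ0}: the paper perturbs multiplicatively with $\mu=m\bigl(1\pm\varphi/\Vert\varphi\Vert_\infty\bigr)$ for smooth $\varphi$, whereas you take $\mu=(1+t)m$ and then use the sign of the integrand, furnished by \eqref{eq:HJleq0} together with $m\geq 0$, to upgrade the vanishing integral to the pointwise identity---both arguments are valid.
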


\begin{proof} 
First, we take \((\mu, \upsilon) =(m+\varphi, u)\) in \eqref{eq:Aepsi.strong-ineq} for an arbitrary $\varphi\in C^\infty(\Tt^d;\RR^+_0)$ to obtain
        \begin{equation*}
                %\label{eq:appositivedois}
                \begin{aligned}
                        \int_{\Tt^d} \big( -u - H(x, Du,m) + V(x)  \big)\varphi \, dx \geq 0.
                \end{aligned}
        \end{equation*}
From this, we conclude \eqref{eq:HJleq0}. Next, we take \((\mu, \upsilon)=\Big(m\Big(1\pm \frac{\varphi}{\Vert \varphi\Vert_\infty}\Big),u\Big)\) 
for an arbitrary non-zero $\varphi\in C^\infty(\Tt^d)$. The
previous choice ensures that $\mu\geq 0$ and, hence, it lies in the domain. 
Accordingly, using it in  \eqref{eq:Aepsi.strong-ineq} yields
\begin{equation*}
        %\label{eq:appositivetres}
        \begin{aligned}
                \pm\int_{\Tt^d} \big( -u - H(x, Du,m) + V(x)  \big)m\varphi \, dx \geq 0.
        \end{aligned}
\end{equation*}
This condition implies \eqref{eq:HJ0}.
Finally, we take \((\mu, \upsilon) =(m, u\pm\varphi)\) in \eqref{eq:Aepsi.strong-ineq} for an arbitrary $\varphi\in W^{1,\bar\gamma}(\Tt^d)$ to conclude~\eqref{eq:strong-transport}.
\end{proof}

Next, we establish uniform estimates for the solution given by Proposition~\ref{thm:strongAepsi} that will enable us to prove Theorem~\ref{thm:main-simple.power.growth}.

\begin{theorem}[Uniform estimates for \((m_\epsi,u_\epsi)\)]\label{thm:apriori-simple.power.growth}
        Under the assumptions of Theorem~\ref{thm:main-simple.power.growth}, let \(( m_\epsi,  u_\epsi) \in L^{\bar\beta}(\Tt^d;\RR^+_0) \times W^{1,\bar\gamma}(\Tt^d)\) be given by Proposition~\ref{thm:strongAepsi}. Then,  
        \begin{equation}
                \label{eq:apriori-simple.power.growth}
                \begin{aligned}
                        & \Vert m_\epsi\Vert_{L^{\bar\beta}(\Tt^d)}^{\bar\beta} + \Vert u_\epsi \Vert_{W^{1,\bar\gamma}(\Tt^d)}^{\bar\gamma}  \leq C
                \end{aligned}
        \end{equation}
for a constant $C>0$ as in Remark~\ref{rmk:onC}; in particular, it is independent of $\epsi$.
\end{theorem}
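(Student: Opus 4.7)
The plan is to exploit the strong variational inequality \eqref{eq:Aepsistrongineq} together with the Hamilton--Jacobi and regularized transport equations extracted in Lemma~\ref{lem:strong.is.strong}, capitalizing on the scaling identities $\alpha\bar\beta'=\bar\gamma$ and $\beta\bar\beta'=\bar\beta$ hidden in \eqref{eq:gamma}.

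First I would test \eqref{eq:Aepsistrongineq} with the admissible pair $(\mu,\upsilon)=(1,0)\in\mathcal K$, obtaining $\langle A_\epsi[m_\epsi,u_\epsi],(m_\epsi-1,u_\epsi)\rangle\leq 0$. Rewriting this as
\[
\langle A_\epsi[m_\epsi,u_\epsi]-A_\epsi[1,0],(m_\epsi-1,u_\epsi)\rangle\leq -\langle A_\epsi[1,0],(m_\epsi-1,u_\epsi)\rangle
\]
and reapplying the coercivity computation behind \eqref{eq:coercive.Aepsi}, the left-hand side is bounded from below by $\tfrac{1}{C}\|m_\epsi\|_{L^{\bar\beta}}^{\bar\beta}+\tfrac{1}{C}\|Du_\epsi\|_{L^{\alpha}}^{\alpha}+\epsi\|u_\epsi\|_{W^{1,\bar\gamma}}^{\bar\gamma}-C$. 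Using $V\in L^{\bar\beta'}(\Tt^d)$, the bounds $|H(x,0,1)|,|D_pH(x,0,1)|\leq C$ (which follow from Assumption~\ref{onH.powergrowth} and Remark~\ref{rmk:implied-a}), and Young's inequality to absorb the lower-order cross terms $C\|Du_\epsi\|_{L^1}$ and $(1+\|V\|_{L^{\bar\beta'}})\|m_\epsi\|_{L^{\bar\beta}}$ into $\tfrac{1}{2C}\|Du_\epsi\|_{L^\alpha}^\alpha$ and $\tfrac{1}{2C}\|m_\epsi\|_{L^{\bar\beta}}^{\bar\beta}$, this already delivers $\epsi$-uniform control of $\|m_\epsi\|_{L^{\bar\beta}}^{\bar\beta}$ and $\|Du_\epsi\|_{L^\alpha}^\alpha$, together with the weighted estimate $\epsi\|u_\epsi\|_{W^{1,\bar\gamma}}^{\bar\gamma}\leq C$.

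To promote this into an $\epsi$-uniform $W^{1,\bar\gamma}$-bound for $u_\epsi$, I would then invoke Lemma~\ref{lem:strong.is.strong}. The HJB inequality combined with \eqref{eq:assH.lower.simple} gives the pointwise estimate $\tfrac{1}{C}|Du_\epsi|^{\alpha}\leq V-u_\epsi+C(m_\epsi^{\beta}+1)$ a.e.; raising both sides to the power $\bar\beta'$ and invoking the scaling identities $\alpha\bar\beta'=\bar\gamma$ and $\beta\bar\beta'=\bar\beta$ yields $|Du_\epsi|^{\bar\gamma}\lesssim|V|^{\bar\beta'}+|u_\epsi|^{\bar\beta'}+m_\epsi^{\bar\beta}+1$. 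The matching lower bound on $u_\epsi$ on $\{m_\epsi>0\}$, coming from the HJB equality and \eqref{eq:boundsH}, supplies the companion estimate $|u_\epsi|^{\bar\beta'}\lesssim|V|^{\bar\beta'}+|Du_\epsi|^{\bar\gamma}+m_\epsi^{\bar\beta}+1$ on that set. To close the coupling, I would test the regularized transport equation \eqref{eq:strong-transport} against a smoothed version of $|u_\epsi|^{\bar\gamma-2}u_\epsi$ (as well as against $u_\epsi$), substitute $H=V-u_\epsi$ on $\{m_\epsi>0\}$ inside $\int m_\epsi D_pH\cdot Du_\epsi$, and invoke Lemma~\ref{lem:A1+anyother} to extract sign-definite $\tfrac{1}{C}m_\epsi^{\bar\beta}$ contributions that dominate the circular $|u_\epsi|^{\bar\beta'}$ terms.

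The main obstacle will be controlling $u_\epsi$ on the set $\{m_\epsi=0\}$, where the HJB is only an inequality and no direct lower bound on $u_\epsi$ is available. There, the global upper bound $u_\epsi\leq V+C(m_\epsi^\beta+1)-\tfrac{1}{C}|Du_\epsi|^\alpha$ handles $u_\epsi^+$ cleanly, but control of $u_\epsi^-$ and $|Du_\epsi|^{\bar\gamma}$ requires the transport equation together with the fact that testing \eqref{eq:strong-transport} with $\varphi=1$ fixes $\int m_\epsi$ close to $1$, preventing $\{m_\epsi=0\}$ from being too large. The exact matching of exponents in \eqref{eq:gamma} is what allows the various H\"older--Young absorptions to close without a bootstrap gap.
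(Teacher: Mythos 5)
Your first half is sound: testing \eqref{eq:Aepsistrongineq} at $(\mu,\upsilon)=(1,0)$ and reusing the computation behind \eqref{eq:coercive.Aepsi} indeed yields $\Vert m_\epsi\Vert_{L^{\bar\beta}}\leq C$, $\Vert Du_\epsi\Vert_{L^{\alpha}}\leq C$ and $\epsi\Vert u_\epsi\Vert_{W^{1,\bar\gamma}}^{\bar\gamma}\leq C$; this is only cosmetically different from the paper, which obtains the $m$-bound by testing \eqref{eq:strong-transport} with $\varphi=u_\epsi$ and subtracting the identity coming from \eqref{eq:HJ0}. Your pointwise estimate $|Du_\epsi|^{\bar\gamma}\lesssim |V|^{\bar\beta'}+|u_\epsi|^{\bar\beta'}+m_\epsi^{\bar\beta}+1$ is likewise exactly the paper's \eqref{eq:Duepsiabove}.

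The gap is in the closing step: you never produce an $\epsi$-uniform bound on $\Vert u_\epsi\Vert_{L^{\bar\gamma}}$ (or even on $\int_{\Tt^d}u_\epsi\,dx$), and the device you propose does not deliver it. Testing \eqref{eq:strong-transport} with a smoothed $|u_\epsi|^{\bar\gamma-2}u_\epsi$ leaves you with the terms $\int(m_\epsi-1)|u_\epsi|^{\bar\gamma-2}u_\epsi\,dx$ and $\int m_\epsi|u_\epsi|^{\bar\gamma-2}D_pH(x,Du_\epsi,m_\epsi)\cdot Du_\epsi\,dx$, neither of which has a usable sign ($u_\epsi$ can be very negative precisely where $m_\epsi$ is small), and nothing forces the ``sign-definite $m_\epsi^{\bar\beta}$ contributions'' to dominate the $|u_\epsi|^{\bar\beta'}$ terms, pointwise or after integration. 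Also, $\varphi=1$ in \eqref{eq:strong-transport} gives $\int m_\epsi\,dx=1-\epsi\int|u_\epsi|^{\bar\gamma-2}u_\epsi\,dx$, so ``$\int m_\epsi$ close to $1$'' already needs the weighted estimate; and even granted that, a lower bound on $|\{m_\epsi>0\}|$ does not by itself control $u_\epsi^-$ or $|Du_\epsi|^{\bar\gamma}$ on $\{m_\epsi=0\}$. The missing idea is the paper's two-sided bound on the average of $u_\epsi$: from below, test \eqref{eq:strong-transport} with $\varphi=u_\epsi$, subtract $\int m_\epsi\bigl(u_\epsi+H(x,Du_\epsi,m_\epsi)-V\bigr)dx=0$ (valid by \eqref{eq:HJ0}) and use \eqref{eq:DpHdotpminusH-estimate1}, which gives \eqref{eq:uepsibelow}; from above, integrate \eqref{eq:HJleq0} using \eqref{eq:assH.lower.simple}, which gives \eqref{eq:uepsiabove}. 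With $\bigl|\int u_\epsi\bigr|\leq C$, the Poincar\'e--Wirtinger inequality yields $\Vert u_\epsi\Vert_{L^{\bar\gamma}}\leq C(\Vert Du_\epsi\Vert_{L^{\bar\gamma}}+1)$, and then your integrated gradient estimate together with $\bar\gamma>\bar\beta'$ closes the loop. You actually list all the needed ingredients (testing with $u_\epsi$, the substitution $H=V-u_\epsi$ on $\{m_\epsi>0\}$, Lemma~\ref{lem:A1+anyother}), but you never assemble them into this mean-value/Poincar\'e step; without it, the coupling between $\Vert Du_\epsi\Vert_{L^{\bar\gamma}}$ and $\Vert u_\epsi\Vert_{L^{\bar\beta'}}$ remains circular.
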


\begin{proof} 
% Throughout this proof, $C$ denotes a positive constant as in Remark~\ref{rmk:onC}, which value may change from one expression to another.
Using \eqref{eq:strong-transport} with $\varphi = u_\epsi$ and subtracting from this the equality
\[\int_{\Tt^d} m_\epsi(u_\epsi+H(x, Du_\epsi,m_\epsi)-V(x))\,
dx = 0,\]
obtained from \eqref{eq:HJ0}, we find that
\begin{equation*}
                %\label{eq:appositiveum}
        \begin{aligned}
                        0 = & \int_{\Tt^d} \Big( -u_\epsi + m_\epsi(-H(x, Du_\epsi,m_\epsi)+D_p H(x, Du_\epsi,m_\epsi)\cdot Du_\epsi ) + V(x) m_\epsi  \\
                        &\qquad  + \epsi |Du_\epsi|^{\bar\gamma} + \epsi |u_\epsi|^{\bar\gamma}\Big)\,
                        dx.
        \end{aligned}
\end{equation*}
In particular, using \eqref{eq:DpHdotpminusH-estimate1} of Lemma~\ref{lem:A1+anyother} and Young's inequality as in the proof of Proposition~\ref{prop:coercAepsi}, we get that 
 \begin{equation}\label{eq:uepsibelow}
        \begin{aligned}
                \int_{\Tt^d} u_\epsi\, dx \geq \frac{1}{C} \Vert m_\epsi\Vert_{L^{\bar\beta}(\Tt^d)}^{\bar\beta} 
        %+ {\color{ForestGreen} \epsi \Vert u_\epsi\Vert_{W^{1,\bar\gamma}(\Tt^d)}^{\bar\gamma}}  +  {\color{red} \frac{1}{c} \int_{\Tt^d}  m_\epsi |Du_\epsi|^\alpha \, dx}
        - C.  
        \end{aligned}
\end{equation}
On the other hand, integrating \eqref{eq:HJleq0} and using $-H(x,p,m)\leq C(m^{\bar\beta-1}+1)$
by \eqref{eq:assH.lower.simple}, we conclude that
        \begin{equation}
                \label{eq:uepsiabove}
                \begin{aligned}
                        & \int_{\Tt^d} u_\epsi\, dx \leq \int_{\Tt^d} \left( |V| + C\big( m_\epsi^{\bar \beta -1}+1\big)\right) \, dx.
                \end{aligned}
        \end{equation}
Then, combining \eqref{eq:uepsibelow} and \eqref{eq:uepsiabove} and using Young's inequality, we obtain
\begin{equation}\label{eq:mepsi.bound}
        \begin{aligned}
                 \Vert m_\epsi\Vert_{L^{\bar\beta}(\Tt^d)}^{\bar\beta} 
         %+ {\color{ForestGreen} \epsi \Vert u_\epsi\Vert_{W^{1,\bar\gamma}%(\Tt^d)}^{\bar\gamma}}  +   {\color{red} \int_{\Tt^d}  m_\epsi |Du_\epsi|^\alpha \, dx}
         \leq C. 
        \end{aligned}
\end{equation}
In turn, using \eqref{eq:mepsi.bound} in \eqref{eq:uepsibelow} and \eqref{eq:uepsiabove} gives that 
\[\left|\int_{\Tt^d} u_\epsi \, dx\right| \leq C.\]
This allows us to use Poincar\'e--Wirtinger's inequality to say that
\begin{equation}\label{eq:poincare-torus}
        \Vert u_\epsi \Vert_{L^{\bar\gamma}(\Tt^d)} \leq C\left(\Vert Du_\epsi\Vert_{L^{\bar\gamma}(\Tt^d)} +1\right).
\end{equation}

Next, we combine \eqref{eq:HJleq0}  with \eqref{eq:assH.lower.simple}  to obtain the pointwise inequality 
\[|Du_\epsi|^\alpha \leq C(m_\epsi^{\bar\beta-1} + |u_\epsi| + |V(x)| + 1),\]
which, raised to the power of $\bar\beta'=\frac{\bar\gamma}{\alpha}$,  yields
\begin{equation}\label{eq:Duepsiabove}
|Du_\epsi|^{\bar\gamma} \leq C(m_\epsi^{\bar\beta} + |u_\epsi|^{\bar\beta'} + |V(x)|^{\bar\beta'}+1).
\end{equation}
Integrating \eqref{eq:Duepsiabove} over $\Tt^d$ and using \eqref{eq:mepsi.bound} and the continuous embedding of $L^{\bar\gamma}$ into $L^{\bar\beta'}$, we find that
\begin{equation*}
\Vert Du_\epsi\Vert_{L^{\bar\gamma}(\Tt^d;\RR^d)}^{\bar\gamma} \leq C\left(\Vert u_\epsi\Vert_{L^{\bar\gamma}(\Tt^d)}^{\bar\beta'} + 1\right).
\end{equation*}
Consequently, recalling \eqref{eq:poincare-torus}, we have that
\begin{equation*}
\Vert Du_\epsi\Vert_{L^{\bar\gamma}(\Tt^d;\RR^d)}^{\bar\gamma} \leq C\left(\Vert Du_\epsi\Vert_{L^{\bar\gamma}(\Tt^d)}^{\bar\beta'} + 1\right),
\end{equation*}
from which we conclude that
\begin{equation}\label{eq:Duepsi.bound}
\Vert Du_\epsi\Vert_{L^{\bar\gamma}(\Tt^d;\RR^d)} \leq C
\end{equation}
because ${\bar\gamma}>\bar\beta'$.
Finally, we obtain \eqref{eq:apriori-simple.power.growth} from \eqref{eq:Duepsi.bound}, \eqref{eq:poincare-torus}, and \eqref{eq:mepsi.bound}.
\end{proof}

We now prove Theorem~\ref{thm:main-simple.power.growth} by considering the limit 
$\epsi\to 0$.

\begin{proof}[Proof of Theorem~\ref{thm:main-simple.power.growth}]
By \eqref{eq:Aepsi.weak.ineq} in the proof of Proposition~\ref{thm:strongAepsi}, we can  find a sequence  \((m_{\epsi_k}, u_{\epsi_k})_k \subset L^{\bar\beta}(\Tt^d;\RR^+_0) \times W^{1,\bar\gamma}(\Tt^d)\), with \(\epsi_k\to0^+\),
satisfying 
\begin{equation}
\label{eq:existweaksol}
\begin{aligned}
0 &\leq \langle A_{\epsi_k}  [\mu,
\upsilon] , (\mu, \upsilon)-(m_{\epsi_k},
u_{\epsi_k})\rangle \\
&= \int_{\Tt^d} \Big(  -\upsilon- H(x, D\upsilon,\mu) + V(x)  \Big)(\mu \, -m_{\epsi_k})\, dx
\\ &\quad+\int_{\Tt^d} \Big( \mu D_p H(x, D\upsilon,\mu) \cdot(D\upsilon - Du_{\epsi_k})  + (\mu-1)( \upsilon-u_{\epsi_k}) \Big)
\, dx
\\&\quad+\epsi_k\int_{\Tt^d}
 \Big( |D\upsilon|^{\bar\gamma-2} D\upsilon \cdot D( \upsilon-u_{\epsi_k}) 
+|\upsilon|^{\bar\gamma-2}\upsilon( \upsilon-u_{\epsi_k})\Big)\,dx
\end{aligned}
\end{equation}
for every \((\mu, \upsilon)\in L^{\bar\beta}(\Tt^d;\RR^+_0) \times W^{1,\bar\gamma}(\Tt^d) \) and \(k\in\Nn\). Moreover, by Theorem~\ref{thm:apriori-simple.power.growth}, we can find \((m,u)\in L^{\bar\beta}(\Tt^d;\RR^+_0)\times W^{1,\bar\gamma}(\Tt^d)\) such that, up to a subsequence that we do not relabel,
\begin{equation}
\label{eq:compactness}
\begin{aligned}
& (m_{\epsi_k}, u_{\epsi_k}) \rightharpoonup (m,u)\quad \text{weakly in } L^{\bar\beta}(\Tt^d;\RR^+_0)\times W^{1,\bar\gamma}(\Tt^d).
\end{aligned}
\end{equation}
Letting \(k\to\infty\) in \eqref{eq:existweaksol}, we conclude from \eqref{eq:compactness} combined with H\"older's inequality that
\begin{equation*}%\label{eq:weakpositive}
\begin{aligned}
0 \leq& \int_{\Tt^d}
\Big(  -\upsilon- H(x, D\upsilon,\mu) + V(x)  \Big)(\mu
\, -m)\,dx\\
&\quad+\int_{\Tt^d} \Big( \mu D_p H(x, D\upsilon,\mu) \cdot(D\upsilon - Du)  + (\mu-1)( \upsilon-u) \Big)
\,dx\\
=&\,\langle A_0[\mu,
\upsilon], (\mu, \upsilon)-(m,u) \rangle  
\end{aligned}
\end{equation*}
for every \((\mu, \upsilon)\in L^{\bar\beta}(\Tt^d;\RR^+_0) \times W^{1,\bar\gamma}(\Tt^d) \). Hence, \((m,u)\)  in \eqref{eq:compactness} is at least a weak solution to \eqref{smfg} in the sense of Definition~\ref{def:weak}. Furthermore, because $A_0$ is hemicontinuous, we get
\[\langle A_0[m,
u], (\mu, \upsilon)-(m,u) \rangle \geq 0\]
for every \((\mu, \upsilon)\in L^{\bar\beta}(\Tt^d;\RR^+_0) \times W^{1,\bar\gamma}(\Tt^d) \) by applying the idea in Remark~\ref{rmk:hemicontinuous-upgrade}. We then conclude the proof by Lemma~\ref{lem:strong.is.strong} with $\epsi=0$.
\end{proof}

\section{Proof of Theorem~\ref{thm:main-congestion.growth}} 
\label{sec:proof-congestion}

This section establishes Theorem~\ref{thm:main-congestion.growth}, adapting the proof from the power-growth case to accommodate Hamiltonians with congestion terms. The analysis requires two main modifications. First, the singular growth conditions in Assumption~\ref{onH.congestiongrowth} present a significant challenge, as they prevent the operator $A_\epsi$ 
defined in \eqref{defAepsi} 
from being well-defined on the entire space $L^{\bar\beta}(\Tt^d; \RR^+_0)\times W^{1,\bar\gamma}(\Tt^d)$ due to potential integrability issues. To overcome this, we must restrict the operator to a carefully constructed convex subset, $\mathcal K$. Second, this restriction introduces a further subtlety: since the solution is not guaranteed to lie within $\mathcal K$, the standard hemicontinuity approach (Remark~\ref{rmk:hemicontinuous-upgrade}) is no longer applicable. The core of the proof is therefore a dedicated lemma (Lemma~\ref{lem:weak.is.strong}) that directly establishes the strong solution property.

We begin by defining the domain:
\begin{equation}\label{def-domainK}
    \mathcal{K} := \left\{(\mu,\upsilon)\in L^{\bar\beta}(\Tt^d)\times W^{1,\bar\gamma}(\Tt^d)\colon \essinf \mu > 0 \quad\text{and}\quad \frac{|D\upsilon|^{\alpha(1+\frac{\tau}{\beta})}}{\mu^\tau+1} \in L^{\bar\beta'}(\Tt^d)\right\}.
\end{equation}
We note that while
\[
\bar{\mathcal K}=L^{\bar\beta}(\Tt^d; \RR^+_0) \times W^{1,\bar\gamma}(\Tt^d), \]
working in $\mathcal K$ avoids singularities when $\mu=0$
since $\essinf \mu>0$.

Before proceeding, we must verify two fundamental properties of this set: that it is convex, making it suitable for our variational framework, and that the operator $A_\epsi$ is in fact well-defined on it. We establish these in the following two lemmas.

\begin{lemma}
The set $\mathcal{K}$ in \eqref{def-domainK} is convex.
\end{lemma}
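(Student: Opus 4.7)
The plan is a direct verification of convexity. Fix $(\mu_0,\upsilon_0),(\mu_1,\upsilon_1)\in\mathcal K$ and $t\in[0,1]$, and set $(\mu_t,\upsilon_t):=((1-t)\mu_0+t\mu_1,\,(1-t)\upsilon_0+t\upsilon_1)$. Since $L^{\bar\beta}(\Tt^d)$ and $W^{1,\bar\gamma}(\Tt^d)$ are vector spaces, $\mu_t$ and $\upsilon_t$ inherit the ambient regularity; moreover $\essinf \mu_t\geq (1-t)\essinf\mu_0 + t\essinf\mu_1>0$, so the first defining condition of $\mathcal K$ is preserved. The content of the lemma therefore reduces to showing that $\frac{|D\upsilon_t|^q}{\mu_t^\tau+1}\in L^{\bar\beta'}(\Tt^d)$, where $q:=\alpha(1+\tau/\beta)$.

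The main obstacle is that $(p,m)\mapsto |p|^q/(m^\tau+1)$ is not jointly convex in general (in $d=1$ one checks that the relevant $2\times 2$ Hessian has determinant proportional to $q(q-2)$, which fails to be positive when $q<2$), so pointwise convexity cannot be invoked directly. Instead, I propose to derive a pointwise estimate bounding $\frac{|D\upsilon_t|^q}{\mu_t^\tau+1}$ by a fixed constant multiple of the sum of the two endpoint quantities, and then conclude by ambient $L^{\bar\beta'}$-integrability.

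The argument rests on three elementary ingredients. First, from $\mu_t\geq(1-t)\mu_0$, $\mu_t\geq t\mu_1$, and $\tau\in[0,1]$ one obtains $\mu_t^\tau+1\geq (1-t)^\tau(\mu_0^\tau+1)$ and $\mu_t^\tau+1\geq t^\tau(\mu_1^\tau+1)$, hence
\[
\mu_t^\tau+1\;\geq\;\tfrac{1}{2}\bigl[(1-t)^\tau(\mu_0^\tau+1)+t^\tau(\mu_1^\tau+1)\bigr].
\]
Second, convexity of $s\mapsto s^q$ (valid since $q>\alpha>1$) yields $|D\upsilon_t|^q\leq 2^{q-1}\bigl[(1-t)^q|D\upsilon_0|^q+t^q|D\upsilon_1|^q\bigr]$. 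Third, combining these two bounds through the elementary inequality $\tfrac{C+D}{A+B}\leq\tfrac{C}{A}+\tfrac{D}{B}$ (valid for $A,B>0$, $C,D\geq 0$), and using $q-\tau>0$ so that $(1-t)^{q-\tau},t^{q-\tau}\leq 1$, produces the pointwise bound
\[
\frac{|D\upsilon_t|^q}{\mu_t^\tau+1}\;\leq\;2^{q}\left[\frac{|D\upsilon_0|^q}{\mu_0^\tau+1}+\frac{|D\upsilon_1|^q}{\mu_1^\tau+1}\right].
\]
Each summand on the right is in $L^{\bar\beta'}(\Tt^d)$ by the assumption $(\mu_i,\upsilon_i)\in\mathcal K$, so the left-hand side is as well, completing the verification that $(\mu_t,\upsilon_t)\in\mathcal K$.
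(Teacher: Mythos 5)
Your proof is correct. It uses the same two ingredients as the paper's argument---namely that $\mu_t^\tau+1$ dominates a positive multiple of each endpoint denominator (since $\mu_t\geq(1-t)\mu_0$ and $\mu_t\geq t\mu_1$), and that $|D\upsilon_t|^{\alpha(1+\tau/\beta)}$ is controlled by the endpoint numerators through convexity of the power---but you package them into a single pointwise bound via the mediant-type inequality $\tfrac{C+D}{A+B}\leq\tfrac{C}{A}+\tfrac{D}{B}$, whereas the paper argues in two steps: it first observes that replacing the density by any comparable one (any $\mu'$ with $\essinf\tfrac{\mu'}{\mu}>0$) keeps a pair in $\mathcal K$, so that $(\mu_t,\upsilon_0)$ and $(\mu_t,\upsilon_1)$ lie in $\mathcal K$, and then invokes convexity of $|\cdot|^{\alpha(1+\tau/\beta)}$ with the density $\mu_t$ frozen. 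Either route is fine; yours is a one-shot verification with an explicit constant $2^{\alpha(1+\tau/\beta)}$ and correctly flags that the map is not jointly convex, while the paper's factorization avoids the mediant inequality altogether by only ever using convexity in the gradient variable at a fixed density. The only cosmetic point in your write-up is that the mediant inequality requires $A,B>0$, so the cases $t\in\{0,1\}$ should be dispatched as trivial (as they are), which you implicitly do by working with $t\in(0,1)$-type estimates.
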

\begin{proof}
We start by noting  that if $(\mu, \upsilon)\in \mathcal{K}$ and $\essinf \tfrac{\mu'}{\mu} > 0$ for some $\mu' \in L^{\bar\beta}(\Tt^d)$, then we have $(\mu', \upsilon)\in \mathcal{K}$ as well. In particular, given $(\mu_1, \upsilon_1)\in \mathcal{K}$ and $(\mu_2, \upsilon_2)\in \mathcal{K}$, and some $0<t<1$, we get that
\[(t\mu_1+(1-t)\mu_2,\ \upsilon_1)\in\mathcal{K}, \qquad (t\mu_1+(1-t)\mu_2,\ \upsilon_2)\in\mathcal{K}.\]
Consequently, also using  the convexity of the function $|\cdot|^{\alpha(1+\frac{\tau}{\beta})}$ and of the space $L^{\bar\beta'}(\Tt^d)$, one finds that
\[(t\mu_1+(1-t)\mu_2,\ t\upsilon_1+(1-t)\upsilon_2)\in\mathcal{K}.\]
Because  $(\mu_1, \upsilon_1)$, $(\mu_2, \upsilon_2) \in \mathcal{K}$ and $t\in (0,1)$ were taken arbitrarily, we conclude that $\mathcal{K}$ is convex.
\end{proof}

\begin{lemma}\label{lem:Aepsi.congestion.well-defined}
Let $A_\epsi$ be given by \eqref{defAepsi}. Then, $A_\epsi$ is a well-defined map from $\mathcal K$ to $L^{\bar\beta'}(\Tt^d) \times W^{-1,\bar\gamma'}(\Tt^d)$.
\end{lemma}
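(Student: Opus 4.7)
The plan is to show, for any fixed $(m,u)\in\mathcal K$, that the first component of $A_\epsi[m,u]$ (the function multiplying $\eta$) lies in $L^{\bar\beta'}(\Tt^d)$ and that the second component (the distribution paired with $\nu$), which can be written formally as $(m-1)+\epsi|u|^{\bar\gamma-2}u-\div\bigl(mD_pH(x,Du,m)+\epsi|Du|^{\bar\gamma-2}Du\bigr)$, lies in $W^{-1,\bar\gamma'}(\Tt^d)$. Using the standard characterization of $W^{-1,\bar\gamma'}$ as distributions of the form $f-\div g$ with $f\in L^{\bar\gamma'}(\Tt^d)$ and $g\in L^{\bar\gamma'}(\Tt^d;\RR^d)$, it suffices to verify
\[
-u+V(x)-H(x,Du,m)\in L^{\bar\beta'},\quad m-1+\epsi|u|^{\bar\gamma-2}u\in L^{\bar\gamma'},\quad mD_pH(x,Du,m)+\epsi|Du|^{\bar\gamma-2}Du\in L^{\bar\gamma'}.
\]

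Most of these memberships I would dispatch routinely. $V\in L^{\bar\beta'}$ is a hypothesis of Theorem~\ref{thm:main-congestion.growth}; $u\in L^{\bar\beta'}$ follows from $W^{1,\bar\gamma}(\Tt^d)\hookrightarrow L^{\bar\gamma}(\Tt^d)\hookrightarrow L^{\bar\beta'}(\Tt^d)$ on the compact torus (since $\bar\gamma\geq\bar\beta'$, equivalent to $\alpha\geq 1$); the embedding $L^{\bar\beta}\hookrightarrow L^{\bar\gamma'}$, equivalent to $\beta(\alpha-1)\geq 0$, gives $m-1\in L^{\bar\gamma'}$; and the two $\varepsilon$-regularization terms lie in $L^{\bar\gamma'}$ because $u,Du\in L^{\bar\gamma}$. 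The membership $mD_pH(x,Du,m)\in L^{\bar\gamma'}(\Tt^d;\RR^d)$ is exactly the conclusion \eqref{eq:boundsmDpH+} of Lemma~\ref{lem:ADFGU.congestion}, with the convention (\ref{a:mDpHat0}) assigning a value on any zero set of $m$ (though $\essinf m>0$ rules such a set out here).

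The main obstacle, and the reason for the restricted domain $\mathcal K$, is the Hamiltonian term $H(x,Du,m)\in L^{\bar\beta'}$. The lower bound \eqref{eq:assH.lower.congestion} yields $-H(x,Du,m)\leq C(m^\beta+1)\in L^{\bar\beta'}$, using $\bar\beta'=(\beta+1)/\beta$, so the negative part is harmless. The positive part has to be extracted from the upper bound \eqref{eq:boundsH+}, which contains the prefactors $1/m$ and $1/m^\tau$ multiplying $|Du|^{\alpha(1+\tau/\beta)}+1$; neither of these factors is in $L^{\bar\beta'}$ on its own without the domain restriction. Here the defining condition $\essinf m>0$ of $\mathcal K$ enters through a short case analysis on $\{m\leq 1\}$ and $\{m\geq 1\}$ that, using $\tau\in[0,1]$, yields the elementary inequalities
\[
\frac{m^\tau+1}{m}\leq C_0,\qquad \frac{m^\tau+1}{m^\tau}\leq C_0,
\]
with $C_0=C_0(\essinf m,\tau)$. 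Substituting these rewrites $(1/m+1/m^\tau)|Du|^{\alpha(1+\tau/\beta)}$ as a bounded multiple of the weighted quantity $|Du|^{\alpha(1+\tau/\beta)}/(m^\tau+1)$, which lies in $L^{\bar\beta'}$ by the very definition of $\mathcal K$, while the additive remainder $1/m+1/m^\tau$ is bounded. Combined with $m^\beta\in L^{\bar\beta'}$, this yields $|H(x,Du,m)|\in L^{\bar\beta'}$ and completes the verification.
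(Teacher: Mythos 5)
Your proposal is correct and follows essentially the same route as the paper: the paper's proof likewise disposes of the routine memberships via $V\in L^{\bar\beta'}$, the embeddings, and \eqref{eq:boundsmDpH+}, and handles the Hamiltonian term by observing that $\bigl(\tfrac{1}{m}+\tfrac{1}{m^\tau}\bigr)(m^\tau+1)\leq 3(\essinf m)^{-1}+3$, which is exactly your pair of elementary inequalities, so that the positive part of $H$ is controlled by the weighted quantity $|Du|^{\alpha(1+\tau/\beta)}/(m^\tau+1)\in L^{\bar\beta'}$ from the definition of $\mathcal K$, with the negative part bounded via \eqref{eq:assH.lower.congestion}. Your write-up merely spells out the standard embedding checks that the paper leaves implicit.
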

\begin{proof}
We observe that $(\mu,\upsilon)\in \mathcal{K}$ implies
\[\left(\frac{1}{\mu} + \frac{1}{\mu^\tau}\right)(|D\upsilon|^{\alpha(1+\frac{\tau}{\beta})} + 1) \in L^{\bar\beta'}(\Tt^d)\]
because $(\tfrac{1}{\mu} + \tfrac{1}{\mu^\tau})(\mu^\tau +1) \leq 3(\essinf \mu)^{-1} + 3$ a.e.~on $\Tt^d$. Hence, $H(x,D\upsilon,\mu)\in L^{\bar\beta'}(\Tt^d)$ for all $(\mu,\upsilon)\in \mathcal{K}$ by \eqref{eq:assH.lower.congestion} and \eqref{eq:boundsH+}.
These integrability properties, together with \eqref{eq:boundsmDpH+} and the fact that $V\in L^{\bar\beta'}(\Tt^d)$, confirm that each component of the operator is well-defined. Therefore, $A_\epsi$ maps $\mathcal K$ to the target space $L^{\bar\beta'}(\Tt^d) \times W^{-1,\bar\gamma'}(\Tt^d)$.
\end{proof}

Having established that $A_\epsi$ is a well-defined operator on the convex set $\mathcal K$, we proceed to verify the conditions of Theorem~\ref{thm:monotone.abstract-weak}. The monotonicity of $A_\epsi$ is an immediate consequence of Assumption~\ref{onH.monotone}, as in the previous section. Coercivity also holds, following the same line of argument as in Proposition~\ref{prop:coercAepsi}. With these conditions satisfied, the abstract theorem guarantees the existence of a solution to the regularized problem, which we state in the following proposition.
\begin{proposition}[Existence of a Regularized Solution]\label{thm:weakAepsi.congestion}
Under the assumptions of Theorem~\ref{thm:main-congestion.growth}, let $A_\epsi$ be defined as in Lemma \ref{lem:Aepsi.congestion.well-defined}. Then, for every $\epsi>0$, there exists $(m_\epsi, u_\epsi)\in L^{\bar\beta}(\Tt^d;\RR^+_0) \times W^{1,\bar\gamma}(\Tt^d)$  satisfying
    \[\langle A_\epsi[\mu,\upsilon], (\mu-m_\epsi, \upsilon-u_\epsi) \rangle \geq 0\]
    for all $(\mu, \upsilon)\in \mathcal{K}$.
\end{proposition}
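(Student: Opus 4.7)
The plan is to apply the abstract existence Theorem~\ref{thm:monotone.abstract-weak} on the reflexive Banach space $X = L^{\bar\beta}(\Tt^d) \times W^{1,\bar\gamma}(\Tt^d)$, with the convex subset $\mathcal{K}$ from \eqref{def-domainK}. Three hypotheses must be verified: $\mathcal{K}$ is nonempty, $A_\epsi$ is monotone on $\mathcal{K}$, and $A_\epsi$ is coercive. Nonemptyness is trivial, since $(1,0)\in\mathcal{K}$: indeed $\essinf 1 = 1 > 0$ and $|D0|^{\alpha(1+\tau/\beta)}/(1+1) \equiv 0 \in L^{\bar\beta'}(\Tt^d)$. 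That $A_\epsi$ is a well-defined map from $\mathcal{K}$ into $X'$ has just been shown in Lemma~\ref{lem:Aepsi.congestion.well-defined}.

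Monotonicity carries over verbatim from the power-growth case treated in Proposition~\ref{thm:strongAepsi}: Assumption~\ref{onH.monotone} together with Remark~\ref{rmk:cxty} gives the monotonicity of the MFG part of $A_\epsi$, while the $\bar\gamma$-Laplacian regularization is the G\^ateaux derivative of the convex functional $u\mapsto \frac{1}{\bar\gamma}\Vert u\Vert_{W^{1,\bar\gamma}}^{\bar\gamma}$, hence monotone. Once coercivity is in hand, Theorem~\ref{thm:monotone.abstract-weak} immediately produces $(m_\epsi, u_\epsi)\in\overline{\mathcal{K}} = L^{\bar\beta}(\Tt^d;\RR^+_0) \times W^{1,\bar\gamma}(\Tt^d)$ satisfying the weak variational inequality stated in the proposition. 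Unlike in Proposition~\ref{thm:strongAepsi}, one cannot promote this to a strong inequality via Remark~\ref{rmk:hemicontinuous-upgrade}, because $(m_\epsi,u_\epsi)$ is only known to lie in $\overline{\mathcal{K}}$ and not in $\mathcal{K}$ itself, where hemicontinuity lives; this is exactly the subtlety flagged in the opening of Section~\ref{sec:proof-congestion}, to be handled separately by the forthcoming Lemma~\ref{lem:weak.is.strong}.

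The main obstacle is coercivity, which I would adapt from Proposition~\ref{prop:coercAepsi}. Expanding $\langle A_\epsi[m,u] - A_\epsi[1,0], (m-1, u)\rangle$ along the lines of \eqref{eq:coercive-test.Aepsi}, the two leading contributions are $m\bigl(D_pH(x,Du,m)\cdot Du - H(x,Du,m)\bigr)$ and $H(x,Du,m)$. For the first, I invoke Lemma~\ref{lem:A1+anyother}, which applies precisely because \eqref{eq:assH.upper.congestion} supplies the hypothesis \eqref{eq:common}, yielding a lower bound of the form $m^{\bar\beta}/C - Cm$. For the second, I use the simpler power-type lower bound \eqref{eq:assH.lower.weak}, which \eqref{eq:assH.lower.congestion} implies by Remark~\ref{rem:hierarchy}, contributing $|Du|^\alpha/C - C(m^\beta + 1)$. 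The residual terms involving $H(x,0,1)$ and $D_pH(x,0,1)$ reduce at $(p,m)=(0,1)$, via \eqref{eq:assH.upper.congestion}--\eqref{eq:assH.DpH.upper.congestion} and \eqref{eq:assH.lower.congestion}, to constants depending only on the problem data, so they are absorbed harmlessly. Combining these with Young's inequality and the $\epsi$-regularization reproduces the coercivity bound $\frac{1}{C}\Vert m\Vert_{L^{\bar\beta}(\Tt^d)}^{\bar\beta} + \epsi \Vert u\Vert_{W^{1,\bar\gamma}(\Tt^d)}^{\bar\gamma} - C$, verbatim the analog of \eqref{eq:coercive.Aepsi}. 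This completes the verification of the hypotheses of Theorem~\ref{thm:monotone.abstract-weak} and hence the proof.
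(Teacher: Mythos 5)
Your proposal is correct and follows essentially the same route as the paper: apply Theorem~\ref{thm:monotone.abstract-weak} on $\mathcal{K}$ from \eqref{def-domainK}, obtain monotonicity from Assumption~\ref{onH.monotone}, and recover the coercivity estimate \eqref{eq:coercive.Aepsi} by combining Lemma~\ref{lem:A1+anyother} (via \eqref{eq:assH.upper.congestion}), the lower bound implied by Remark~\ref{rem:hierarchy}, and the bounds on $H(x,0,1)$ and $D_pH(x,0,1)$ from \eqref{eq:assH.upper.congestion}--\eqref{eq:assH.lower.congestion}. Your added observations---that $(1,0)\in\mathcal{K}$ and that the hemicontinuity upgrade of Remark~\ref{rmk:hemicontinuous-upgrade} is unavailable since the solution lies only in $\overline{\mathcal{K}}$---are accurate and consistent with how the paper handles this case.
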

\begin{proof}
    As mentioned above, the conclusion follows from Theorem~\ref{thm:monotone.abstract-weak}, once we prove that $A_\epsi$ satisfies the conditions therein. First, as in Proposition~\ref{thm:strongAepsi}, Assumption~\ref{onH.monotone} ensures that $A_\epsi$ is monotone. Thus, it remains to establish the coercivity of $A_\epsi$. For this, we note that the estimates in \eqref{eq:coercive.Aepsi} are also satisfied in the present
setting because  \eqref{eq:DpHdotpminusH-estimate1}
holds by Assumption~\ref{onH.monotone}
  and  \eqref{eq:assH.upper.congestion},
 $|H(x,0,1)| \leq C$ by \eqref{eq:assH.upper.congestion} and \eqref{eq:assH.lower.congestion},  $|D_pH(x,0,1)|
\leq C$  by \eqref{eq:assH.DpH.upper.congestion},
and \eqref{eq:assH.lower.simple} {holds}
by Remark~\ref{rem:hierarchy} (noting that \eqref{eq:assH.lower.weak} and \eqref{eq:assH.lower.simple}
are equivalent). Hence, $A_\epsi$ is also coercive, which concludes
the proof. 
\end{proof}

As anticipated, proving the strong solution property for the congestion case requires a dedicated argument, since the solution $$(m_\epsi, u_\epsi)$$ is not guaranteed to lie in the operator's domain $\mathcal K$. The following key lemma provides this argument, showing that any solution to the weak variational inequality is a strong solution.

\begin{lemma}[Strong solution property]\label{lem:weak.is.strong} Under the assumptions of Theorem~\ref{thm:main-congestion.growth}, let $\epsi\geq 0$ and  $(m, u)\in L^{\bar\beta}(\Tt^d;\RR^+_0) \times W^{1,\bar\gamma}(\Tt^d)$ be such that
\begin{equation}
\label{eq:weak.epsi.cong}
\begin{aligned}
 \langle A_\epsi[\mu,\upsilon], (\mu-m, \upsilon-u) \rangle \geq 0
\end{aligned}
\end{equation}
    for all $(\mu, \upsilon)\in \mathcal{K}$.
    Then, we have
    \begin{align}
            & u+ H(x,Du,m)- V(x) \leq 0 \qquad \text{a.e.~in~} \Tt^d, \label{eq:congest.HJleq0}\\
            & u+ H(x,Du,m)- V(x) = 0 \qquad \text{a.e.~in~} \{x\in\Tt^d\colon m>0\}, \label{eq:congest.HJ0}
    \end{align}
and
    \begin{equation}\label{eq:congest.transport}
        \int_{\Tt^d} \Big[ (m-1)\varphi + mD_pH(x,Du,m)\cdot D\varphi + \epsi|Du|^{\bar\gamma-2}Du\cdot D\varphi + \epsi |u|^{\bar\gamma-2}u\varphi \Big]\, dx = 0
    \end{equation}
    for all $\varphi\in W^{1,\bar\gamma}(\Tt^d)$.
Furthermore, 
\begin{equation}
\label{eq:reg.pair.sol}
\begin{aligned}
H(x,Du, m)\in L^{\bar \beta'}(\Tt^d)\quad \text{and}\quad\frac{|Du|^{\alpha(1+\frac{\tau}{\beta})}}{m^\tau+1}\in
L^{\bar\beta'}(\Tt^d).
\end{aligned}
\end{equation}
    
\end{lemma}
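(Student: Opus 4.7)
The plan is to mimic the approach of Lemma~\ref{lem:strong.is.strong}, inserting specific admissible test pairs into the weak inequality~\eqref{eq:weak.epsi.cong} and extracting pointwise information. The novelty is that $(m,u)$ itself may not lie in $\mathcal K$: the density may vanish, and when $\tau>0$ the integrability of $|Du|^{\alpha(1+\tau/\beta)}/(m^\tau+1)$ is not automatic from $(m,u)\in L^{\bar\beta}\times W^{1,\bar\gamma}$. As a result, Remark~\ref{rmk:hemicontinuous-upgrade} is unavailable, $A_\epsi[m,u]$ is not a priori well-defined, and every perturbation must be engineered to stay inside $\mathcal K$---typically by shifting $m$ by a small positive function and by approximating $u$ by Lipschitz functions.

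\emph{Step 1: HJB inequality and the integrability~\eqref{eq:reg.pair.sol}.} I would test with $(\mu,\upsilon)=(m+\delta\eta,\upsilon_n)$, where $\eta\in L^\infty(\Tt^d)$ with $\eta\geq\eta_{\min}>0$, $\delta>0$, and $\upsilon_n:=u\ast\rho_n\in C^\infty(\Tt^d)$ is a mollification of $u$. Then $\essinf(m+\delta\eta)\geq\delta\eta_{\min}>0$, $D\upsilon_n\in L^\infty$, and so admissibility in $\mathcal K$ holds. The resulting inequality has a dominant term $\delta\int\eta(-\upsilon_n-H(x,D\upsilon_n,m+\delta\eta)+V)\,dx$ plus pieces bilinear in $\upsilon_n-u$; the latter vanish as $n\to\infty$ because $\|D\upsilon_n\|_{L^{\bar\gamma}}\leq\|Du\|_{L^{\bar\gamma}}$ uniformly, so $(m+\delta\eta)D_pH(\cdot,D\upsilon_n,m+\delta\eta)$ is bounded in $L^{\bar\gamma'}$ by~\eqref{eq:boundsmDpH+}, and by H\"older they are bounded by $\|D(\upsilon_n-u)\|_{L^{\bar\gamma}}\to 0$. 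For the $H$-term I would invoke the convexity of $H$ in $p$ (Remark~\ref{rmk:cxty}) to obtain the one-sided upper-semicontinuity
\[
\limsup_n\int-\eta\,H(x,D\upsilon_n,m+\delta\eta)\,dx\leq\int-\eta\,H(x,Du,m+\delta\eta)\,dx,
\]
which, combined with the weak inequality, yields $\int\eta(-u-H(x,Du,m+\delta\eta)+V)\,dx\geq 0$. Letting $\delta\to 0^+$ by monotone convergence (using $H(x,Du,m+\delta\eta)\nearrow H(x,Du,m)$ from Remark~\ref{rmk:cxty} and~(\ref{a:Hat0}), dominated from above by $C(m^\beta+1)$ thanks to~\eqref{eq:assH.lower.congestion}), and then freeing $\eta_{\min}$ (writing a generic nonnegative $\tilde\eta$ as $\eta_{\min}+\tilde\eta$), I would obtain the pointwise HJB inequality~\eqref{eq:congest.HJleq0}. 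Substituting~\eqref{eq:congest.HJleq0} into~\eqref{eq:assH.lower.congestion} at once gives $|Du|^{\alpha(1+\tau/\beta)}/(m^\tau+1)\leq C(m^\beta+|u|+|V|+1)\in L^{\bar\beta'}(\Tt^d)$, and~\eqref{eq:boundsH+} then yields $H(\cdot,Du,m)\in L^{\bar\beta'}(\Tt^d)$, establishing~\eqref{eq:reg.pair.sol}.

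\emph{Step 2: HJB equality on $\{m>0\}$ and the transport equation.} Once~\eqref{eq:reg.pair.sol} is in hand, both components of $A_\epsi[m,u]$ are well-defined functionals, and the perturbation arguments of Lemma~\ref{lem:strong.is.strong} carry over up to a $\delta>0$ shift. For~\eqref{eq:congest.HJ0} I would test with $(\mu,\upsilon)=((m+\delta)(1\pm s\chi),\upsilon_n)$ for $\chi\in C^\infty(\Tt^d)$ with $\|\chi\|_\infty<1$, $s\in(0,1)$, and $\upsilon_n\to u$ as above, and pass to the limits $n\to\infty$, $s\to 0^+$, $\delta\to 0^+$ in that order; combined with Step~1 this produces~\eqref{eq:congest.HJ0}. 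For~\eqref{eq:congest.transport}, I would test with $(\mu,\upsilon)=(m+\delta,\upsilon_n\pm s\varphi)$ for $\varphi\in W^{1,\infty}(\Tt^d)$, letting $s,n,\delta$ converge; the result extends to all $\varphi\in W^{1,\bar\gamma}(\Tt^d)$ by density, using~\eqref{eq:reg.pair.sol} and the continuity of $mD_pH$ granted by~(\ref{a:mDpHat0}) to justify dominated convergence.

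\emph{Main obstacle.} The core technical difficulty is the passage to the limit in the $H$-term in Step~1. Because standard mollifications of $u\in W^{1,\bar\gamma}$ do not enjoy uniform $L^{\alpha(1+\tau/\beta)}$-bounds on their gradients when $\tau>0$, naive dominated convergence fails against the upper bound~\eqref{eq:boundsH+}, whose control depends on precisely this higher Lebesgue norm. The resolution is to exploit the convexity of $H$ in $p$ (Remark~\ref{rmk:cxty}) to obtain weak lower-semicontinuity of the integrated Hamiltonian along $D\upsilon_n\rightharpoonup Du$, equivalently an upper-semicontinuity statement for its negative; this one-sided control is exactly what is needed to derive the inequality~\eqref{eq:congest.HJleq0}, and sidesteps the need for strong convergence in the higher Lebesgue space. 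Once this point is settled, Steps~2 is routine given~\eqref{eq:reg.pair.sol}.
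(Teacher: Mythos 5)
Your Step 1 is essentially correct and takes a genuinely different route from the paper: the paper builds a pointwise competitor density (the infimum in \eqref{eq:mu-definition}), keeps $\upsilon=u$ as the second test component, and passes $\delta\to0$ in $\mu_\delta=\max\{\mu,\delta\}$ by dominated convergence, whereas you perturb additively by $\delta\eta$, mollify $u$, and use the one-sided Fatou/convexity argument to handle the Hamiltonian term. Since the multiplier $\mu-m=\delta\eta$ is nonnegative there, this one-sided control is indeed all you need, and your derivation of \eqref{eq:congest.HJleq0} and of \eqref{eq:reg.pair.sol} goes through (one small slip: $H(\cdot,Du,m)\in L^{\bar\beta'}(\Tt^d)$ follows from the Hamilton--Jacobi inequality itself together with \eqref{eq:assH.lower.congestion}; the upper bound \eqref{eq:boundsH+} involves $1/m$ and is of no use for this).

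The genuine gap is your Step 2, which is not ``routine given \eqref{eq:reg.pair.sol}''. First, with $\mu=(m+\delta)(1\pm s\chi)$ the multiplier $\mu-m=\pm s\chi m+\delta(1\pm s\chi)$ changes sign, so the one-sided Fatou device that carried Step 1 cannot pass $n\to\infty$ (nor $\delta\to0$) in $\int H(x,D\upsilon_n,\mu)(\mu-m)\,dx$: you now need a two-sided, $n$-uniform integrable domination of $H(x,D\upsilon_n,\mu)\,|\mu-m|$, and the crude bound \eqref{eq:boundsH+} against mollified gradients does not obviously provide it; your proposal neither supplies such a domination (it can be arranged, e.g.\ via maximal-function control of $|D\upsilon_n|$ combined with \eqref{eq:reg.pair.sol}) nor takes the simpler way out of dropping the mollification and testing with $\upsilon=u$ directly, which is admissible once \eqref{eq:reg.pair.sol} is known and is what the paper effectively does through the construction \eqref{eq:mu-re.definition}. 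Second, your stated order of limits is wrong: at fixed $\delta$, sending $s\to0^+$ collapses the inequality to $\delta\int_{\Tt^d}(-u-H(x,Du,m+\delta)+V)\,dx\geq0$, i.e.\ back to Step 1, and the subsequent $\delta\to0^+$ cannot recover the first-order-in-$s$ information needed for \eqref{eq:congest.HJ0}; one must send $\delta\to0$ (or take $\delta=o(s)$) before dividing by $s$ and letting $s\to0$, with the dominated-convergence justifications this requires. The same vagueness affects your transport step: the paper is careful to obtain first the inequality \eqref{eq:transport-test-m.delta} by bounding the Hamilton--Jacobi term only from below via \eqref{eq:assH.lower.congestion} (never passing to the limit in $H(x,Du+D\varphi,m_\delta)$, which may tend to $+\infty$ on $\{m=0\}$), and only then performs the $t\to0$ variation; ``letting $s,n,\delta$ converge'' glosses over exactly this point.
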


\begin{proof} 
The proof proceeds in three main steps. For the properties of the Hamilton--Jacobi equation,  \eqref{eq:congest.HJleq0} and \eqref{eq:congest.HJ0}, we construct specific test functions $\mu$ by defining them pointwise as minimal ``corrections'' to $m$. Since these test functions may not be in $\mathcal K$, we use an approximation argument ($\mu_\delta = \max\{ \mu, \delta\}$) and pass to the limit.  To establish the transport equation \eqref{eq:congest.transport}, we use a different, two-stage approach. First, we test the variational inequality with an approximating pair $(m_\delta, u+\varphi)$, where $m_\delta=\max\{ m,\delta\}$ and $\varphi$ is a smooth function, 
to derive an inequality for the limit objects. Second, 
we upgrade this inequality to an equality using a first-order variation argument (testing with $t\varphi$).

\medskip
\noindent{\bf Step 1:} Proof of the Hamilton--Jacobi inequality \eqref{eq:congest.HJleq0}.
\medskip

Our strategy is to define a test function $\mu$ as the minimal ``correction'' to $m$ required to make the Hamilton--Jacobi expression less than an arbitrary positive tolerance $k$. The construction is designed such that if $m$ itself fails to satisfy this condition, then $\mu$ must be strictly greater than $m$. The core of the argument is to then use the variational inequality \eqref{eq:weak.epsi.cong} to show that this correction is impossible (i.e., that $\mu=m$ a.e.), which forces the conclusion that $m$ must have satisfied the desired inequality in the first place.

To this end, we fix some $k>0$ and define, for a.e.~$x\in\Tt^d$,
    \begin{equation}\label{eq:mu-definition}
        \mu(x):= \inf\{\mu_0\in\RR^+\colon \mu_0\geq m(x),\, u(x)+H(x,Du(x),\mu_0) - V(x)\leq k\}.
    \end{equation}
    First, we note that the set on the right-hand side of \eqref{eq:mu-definition} is non-empty by \eqref{eq:boundsH+}, and  the function $\mu\colon\Tt^d\to \RR^+_0$ is measurable in view of  the identity  $\{x\colon \mu\leq a\} = \{x\colon m\leq a\} \cap \{x\colon u+H(x,Du,a)-V(x)\leq k\}$ for every $a\in\RR^+$. Secondly, when $H(x,Du,\mu)$ is evaluated in the set $\{x\colon \mu=0\}$ in the sense of (\ref{a:Hat0}), it takes a finite value because of the upper bound in~\eqref{eq:mu-definition}. Moreover, we have the following identity
    \begin{equation}\label{eq:mu-minus-m-formula}
        (\mu-m)\big(-k+u+H(x,Du,\mu)-V(x)\big) = 0.
    \end{equation}
To see why this holds, consider the two cases for the first factor.   If $\mu(x) = m(x)$, the identity holds trivially. If $\mu(x) > m(x)$, the equality $u+H(x,Du,\mu)-V(x) = k$ must hold. Indeed, we know from the definition that $u+H(x,Du,\mu)-V(x) \leq k$. If this inequality were strict, the continuity of $H$ would allow us to find a smaller value $\mu(x)-\varepsilon > m(x)$ that still satisfies the condition, contradicting the definition of $\mu(x)$ as the infimum. Thus, the identity \eqref{eq:mu-minus-m-formula} holds.

Next, we establish the necessary integrability properties of $\mu$. In the set where the second factor in \eqref{eq:mu-minus-m-formula} vanishes, we have $u + H(x, Du, \mu) - V(x) = k$. The estimate \eqref{eq:boundsH+} then gives
    \[0 \leq k = u + H(x, Du, \mu) - V(x) \leq C\left(\frac{1}{\mu} + \frac{1}{\mu^\tau}\right)(|Du|^{\alpha(1+\frac{\tau}{\beta})} +1) - \frac{1}{C}\mu^{\beta} +|u|+|V(x)|.\]
    Here, we can rearrange the inequality between the extreme terms by first moving the negative terms across, then multiplying by $(1/\mu + 1/\mu^\tau)^{-1}$ and combining with the estimates
     \[\frac{\mu^{\beta+\tau}-1}{2} \leq \mu^\beta\left(\frac{1}{\mu} + \frac{1}{\mu^\tau}\right)^{-1} \qquad \text{and}\qquad \left(\frac{1}{\mu} + \frac{1}{\mu^\tau}\right)^{-1} \leq \mu^\tau,\]
    and, finally, redefining $C$, to obtain
    \[\frac{\mu^{\beta+\tau}-1}{2} \leq C\big(|Du|^{\alpha(1+\frac{\tau}{\beta})}+1 + \mu^\tau(|u|+|V(x)|)\big).\]
    Then, we can use Young's inequality and other standard rearrangements to conclude
    \[\mu\leq C|Du|^{\alpha/\beta}+C(|u|+|V(x)|)^{1/\beta} +C
    \in L^{\bar\beta}(\Tt^d).\]
    On the other hand, in the set where the first term of the product in \eqref{eq:mu-minus-m-formula} vanishes, we have $\mu = m \in L^{\bar\beta}(\Tt^d)$. Consequently, $\mu\in L^{\bar\beta}(\Tt^d;\RR^+_0)$. Furthermore, \eqref{eq:assH.lower.congestion} and \eqref{eq:mu-definition} yield the estimate
    \begin{equation}\label{eq:H-at-Du-mu.bounds}
      \frac{1}{C}\left(\frac{|Du|^{\alpha(1+\frac{\tau}{\beta})}}{\mu^\tau+1}\right) - C(\mu^\beta+1) \leq H(x,Du,\mu) \leq k + |u| + |V(x)|,  
    \end{equation}
    which shows at once that $H(x,Du,\mu)\in L^{\bar\beta'}(\Tt^d)$ and $\frac{|Du|^{\alpha(1+\frac{\tau}{\beta})}}{\mu^\tau+1} \in L^{\bar\beta'}(\Tt^d)$.

Now we proceed with the approximation argument. While the identity \eqref{eq:mu-minus-m-formula} is central, we cannot use $(\mu, u)$ as a test function directly, as it is not guaranteed to be in $\mathcal K$ (we may have $\essinf \mu = 0$). To circumvent this, we define an admissible approximation $\mu_\delta(x) := \max\{\mu(x),\delta\}$ for $\delta > 0$. 
Recalling \eqref{def-domainK}, observe that $(\mu_\delta,u)\in\mathcal{K}$ because $\essinf\mu_\delta \geq \delta > 0$ and the bounds above yield $\mu_\delta\in L^{\bar\beta}(\Tt^d)$ and $\frac{|Du|^{\alpha(1+\frac{\tau}{\beta})}}{\mu_\delta^\tau+1} \in L^{\bar\beta'}(\Tt^d)$. Therefore, we can use \eqref{eq:weak.epsi.cong}  to
conclude that
    \begin{equation}\label{eq:hjb-test-mu.delta}
        \int_{\Tt^d} (u+H(x,Du,\mu_\delta)-V(x))(\mu_\delta-m)\,
dx \leq 0.
    \end{equation}
    Moreover, the integrability bounds above on $\mu_\delta$ and $H(x,Du,\mu_\delta)$, hence on the two multiplied terms of the integrand in the last estimate, are uniform for $\delta\leq 1$. This allows us to use the dominated convergence theorem to conclude that
    \[k\int_{\Tt^d}(\mu-m) \, dx= \int_{\Tt^d} (u+H(x,Du,\mu)-V(x))(\mu-m)\,
dx \leq 0,\]
    where we also used \eqref{eq:mu-minus-m-formula}. 
  Since $k>0$ and $\mu \geq m$ by definition, we must have $\mu=m$ a.e.. From the definition of $\mu$, $u+H(x,Du,m)-V(x)\leq k$ a.e.~by~\eqref{eq:mu-definition}. Because $k>0$ is arbitrary,  we achieve \eqref{eq:congest.HJleq0}.

Finally, we note that \eqref{eq:H-at-Du-mu.bounds}
combined with $\mu=m$ yields \eqref{eq:reg.pair.sol}.

\medskip
\noindent{\bf Step 2:} Proof of the equality \eqref{eq:congest.HJ0} on $\{m>0\}$.    
\medskip 

 The approach for proving this equality mirrors that of Step 1, but requires a test function designed to prove the inequality in the opposite direction. To do this, we will construct a competitor density $\mu$ that is smaller than $m$ wherever $m$ fails to satisfy the condition { $u+H(x,Du,m)-V(x) \geq 0$}. This is achieved by defining $\mu$ as the minimum of $m$ and a new infimum targeting the tolerance $-k$.

    To this end, we fix $k>0$ and  set
    \begin{equation}\label{eq:mu-re.definition}
        \mu(x):=
        \min\big\{m(x), \inf\{\mu_0\in\RR^+\colon u(x)+H(x,Du(x),\mu_0)-V(x) \leq -k\}\big\}.
    \end{equation}
    First, we observe the crucial bound
    \begin{equation}\label{eq:mu.redef.up}
        u+H(x,Du,\mu)-V(x)\leq 0,
    \end{equation}
    which holds in the set $\{x\colon \mu < m\}$ by \eqref{eq:mu-re.definition} and in the set $\{x\colon \mu = m\}$ by \eqref{eq:congest.HJleq0}. Second,  the inequality
    \begin{equation}\label{eq:mu.redef-minus-m.bound}
        (\mu-m)\big(k+u+H(x,Du,\mu)-V(x)\big) \geq 0
    \end{equation}
    holds because the first term in the product is non-positive, while the second term is non-positive in the set where the first term does not vanish by \eqref{eq:mu-re.definition}. Note that \eqref{eq:mu.redef-minus-m.bound} is analogous to \eqref{eq:mu-minus-m-formula}, but unlike the latter, it may have strict inequality in the set $\{x\colon m>\mu=0\}$. Thirdly, $\mu\in L^{\bar\beta}(\Tt^d)$  because $\mu\leq m$. Finally, 
    the bounds in \eqref{eq:H-at-Du-mu.bounds}  hold in the present
setting  by \eqref{eq:mu.redef.up} and \eqref{eq:assH.lower.congestion}, proving that $H(x,Du,\mu)\in L^{\bar\beta'}(\Tt^d)$ and $\frac{|Du|^{\alpha(1+\frac{\tau}{\beta})}}{\mu^\tau+1} \in L^{\bar\beta'}(\Tt^d)$ as before.

With these properties established, the remainder of the proof proceeds in a similar way to Step 1. We first construct an approximation
$(\mu_\delta,u)\in \mathcal{K}$ with $\mu_\delta(x) := \max\{\mu(x),\delta\}$.
We use $\mu_\delta$ as an admissible approximation of our ideal test function $\mu$, apply the variational inequality obtaining 
\eqref{eq:hjb-test-mu.delta}.
 Moreover, the integrability bounds are uniform for $\delta\leq 1$, allowing us to use the dominated convergence theorem to
 take the limit $\delta\to 0$ and 
 get
    \[k\int_{\Tt^d}(m-\mu)\, dx \leq \int_{\Tt^d} (u+H(x,Du,\mu)-V(x))(\mu-m)\,
dx \leq 0,\]
    where we also used \eqref{eq:mu.redef-minus-m.bound}.
    Since $k>0$, the last estimate ensures that $\mu=m$ a.e..
Hence, we must have $u+H(x,Du,m)-V(x) \geq -k$ in the set $\{x \colon m>0\}$. If the inequality were false, the infimum in the definition \eqref{eq:mu-re.definition} would be strictly less than $m(x)$ (by continuity and monotonicity of $H$), which would force $\mu(x) < m(x)$, a contradiction to $\mu=m$. Noting, as before, that $k>0$ is arbitrary, we conclude \eqref{eq:congest.HJ0}.

\medskip
\noindent{\bf Step 3:} Proof of the transport equation \eqref{eq:congest.transport}.
\medskip

The  strategy to prove the transport equation is to first test the variational inequality with an approximating pair $(m_\delta, u+\varphi)$, for a smooth function $\varphi$, and take the limit as $\delta \to 0$. This yields a variational inequality for the limit objects. We then upgrade this inequality to the desired equality by employing a first-order variation argument, which involves replacing $\varphi$ with $t\varphi$ and analyzing the limit as the parameter $t \to 0$.

 By density of $C^\infty(\Tt^d)$ in $W^{1,\bar\gamma}(\Tt^d)$
   and the bounds in \eqref{eq:boundsmDpH+},
    it is enough to prove \eqref{eq:congest.transport} for any $\varphi\in C^\infty(\Tt^d)$.
    We then set $m_\delta:=\max\{m, \delta\}$ for $\delta>0$ and observe that $(m_\delta, u+\varphi)\in \mathcal{K}$ for arbitrary $\varphi\in C^\infty(\Tt^d)$ by \eqref{eq:reg.pair.sol} and because
    \[\frac{|Du+D\varphi|^{\alpha(1+\frac{\tau}{\beta})}}{m_\delta^\tau+1} \leq 2^{{\alpha(1+\frac{\tau}{\beta})}-1}\left(\frac{|Du|^{\alpha(1+\frac{\tau}{\beta})}}{m^\tau+1}
+|D\varphi|^{\alpha(1+\frac{\tau}{\beta})}\right).\]
     Therefore, using \eqref{eq:weak.epsi.cong} with $(\mu,\upsilon)=(m_\delta,u+\varphi)$,   we have
    \begin{equation*}
    \begin{aligned}
        & \int_{\Tt^d} \big[(m_\delta-1)\varphi + m_{\delta}D_pH(x,Du+D\varphi,m_\delta)\cdot D\varphi\big]
\,dx \\
        &\quad + \int_{\Tt^d} \Big[\epsi|Du+D\varphi|^{\bar\gamma-2}(Du+D\varphi)\cdot D\varphi + \epsi|u+\varphi|^{\bar\gamma-2}(u+\varphi)\varphi\Big]
\,dx \\
        & \qquad\qquad
        \geq \int_{\Tt^d} (u+\varphi+H(x,Du+D\varphi,m_\delta)-V(x))(m_\delta-m)\,
dx \\
        & \qquad\qquad
        \geq \int_{\Tt^d} (u+\varphi-C(m_\delta^\beta+1)-V(x))(m_\delta-m)\,
dx,
    \end{aligned}
    \end{equation*}
where we used \eqref{eq:assH.lower.congestion} in the last step. The integrands in the top and the bottom lines satisfy integrability bounds that are uniform for $\delta\leq 1$ because of \eqref{eq:assH.DpH.upper.congestion} and $m_\delta\leq m+1$; thus, we can use dominated convergence theorem in the limit $\delta\to 0$ for the inequality between the extreme terms to get
    \begin{equation}\label{eq:transport-test-m.delta}
    \begin{aligned}
        & \int_{\Tt^d} \big[(m-1)\varphi + mD_pH(x,Du+D\varphi,m)\cdot D\varphi\big]
\,dx \\
        &\quad + \int_{\Tt^d} \Big[\epsi|Du+D\varphi|^{\bar\gamma-2}(Du+D\varphi)\cdot D\varphi + \epsi|u+\varphi|^{\bar\gamma-2}(u+\varphi)\varphi\Big]
\,dx  \geq 0.
    \end{aligned}
    \end{equation}
    Note that the expression $mD_pH(x,Du+D\varphi,m)$ here is evaluated in the set $\{x\colon m=0\}$ in the sense of (\ref{a:mDpHat0}). Also notice how we avoided taking the limit of the middle integral because $H(x,Du+D\varphi,m)$ may equal $+\infty$ at some points with $m=0$.
    %and even $H(x,Du+D\varphi,m_\delta)m_\delta$ may not converge at those points, although it remains bounded. 
 Next, we replace $\varphi$ in \eqref{eq:transport-test-m.delta} by $t\varphi$ for some $t\neq 0$ and cancel through $t$ to find that
\begin{equation*}%\label{eq:def.iota}
\begin{aligned}
       \iota(t) = & \int_{\Tt^d} \big[(m-1)\varphi + mD_pH(x,Du+tD\varphi,m)\cdot D\varphi\big]
\,dx \\
        & + \int_{\Tt^d} \Big[\epsi|Du+tD\varphi|^{\bar\gamma-2}(Du+tD\varphi)\cdot D\varphi + \epsi|u+t\varphi|^{\bar\gamma-2}(u+t\varphi)\varphi\Big]
\,dx 
\end{aligned}
\end{equation*}
is such that $\iota(t) \geq 0$ for $t>0$, and  $\iota(t) \leq 0$ for $t<0$. Moreover,  the integrand defining $\iota$ is continuous in $t$ for a.e.~$x\in\Tt^d$ by (\ref{a:mDpHat0}), and it satisfies integrability bounds that are uniform for $-1\leq t\leq 1$. Hence, we use dominated convergence theorem in the limit $t\to 0$ to  \eqref{eq:congest.transport}.
\end{proof}

\begin{theorem}[Uniform estimates for \((m_\epsi,u_\epsi)\)]\label{thm:apriori-congestion.growth}
        Under the assumptions of Theorem~\ref{thm:main-congestion.growth}, let \(( m_\epsi,  u_\epsi) \in L^{\bar\beta}(\Tt^d;\RR^+_0) \times W^{1,\bar\gamma}(\Tt^d)\) be given by Proposition~\ref{thm:weakAepsi.congestion}. Then,  
        \begin{equation*}
                %\label{eq:apriori-congestion.growth}
                \begin{aligned}
                        & \Vert m_\epsi\Vert_{L^{\bar\beta}(\Tt^d)}^{\bar\beta} + \Vert u_\epsi \Vert_{W^{1,\bar\gamma}(\Tt^d)}^{\bar\gamma}  \leq C
                \end{aligned}
        \end{equation*}
for a constant $C>0$ as in Remark~\ref{rmk:onC}; in particular, it is independent of $\epsi$.
\end{theorem}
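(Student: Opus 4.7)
The plan is to adapt the proof of Theorem~\ref{thm:apriori-simple.power.growth} to the congestion case. The tools available are essentially the same: the Hamilton--Jacobi statements \eqref{eq:congest.HJleq0}--\eqref{eq:congest.HJ0}, the transport equation \eqref{eq:congest.transport} (now with test functions in $W^{1,\bar\gamma}$), and the Lagrangian lower bound from Lemma~\ref{lem:A1+anyother} (applicable because \eqref{eq:assH.upper.congestion} has precisely the form of the hypothesis \eqref{eq:common} of that lemma). What changes is that the sharp pointwise lower bound \eqref{eq:assH.lower.congestion} on $H$ carries the congestion factor $(m^\tau+1)^{-1}$, so the step that extracts a pointwise bound on $|Du_\epsi|^{\bar\gamma}$ requires an extra Young's inequality tuned to the exponent~$\tau$.

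First, I would test \eqref{eq:congest.transport} with $\varphi=u_\epsi\in W^{1,\bar\gamma}(\Tt^d)$ and subtract from it the identity obtained by multiplying \eqref{eq:congest.HJ0} by $m_\epsi$ and integrating (valid because the integrand vanishes on $\{m_\epsi=0\}$). Applying Lemma~\ref{lem:A1+anyother} to the $m_\epsi(D_pH\cdot Du_\epsi-H)$ contribution and absorbing $V(x)m_\epsi$ by Young's inequality while dropping the nonnegative $\epsi$-terms, I obtain the lower bound $\int u_\epsi\,dx\geq \tfrac{1}{C}\Vert m_\epsi\Vert_{L^{\bar\beta}(\Tt^d)}^{\bar\beta}-C$, exactly as in \eqref{eq:uepsibelow}. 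The matching upper bound on $\int u_\epsi\,dx$ follows by integrating \eqref{eq:congest.HJleq0} and noting that \eqref{eq:assH.lower.congestion} implies $-H(x,Du_\epsi,m_\epsi)\leq C(m_\epsi^{\beta}+1)$. Combining the two bounds with Young's inequality yields $\Vert m_\epsi\Vert_{L^{\bar\beta}(\Tt^d)}\leq C$ and $\bigl|\int u_\epsi\,dx\bigr|\leq C$, and Poincaré--Wirtinger on $\Tt^d$ then gives $\Vert u_\epsi\Vert_{L^{\bar\gamma}(\Tt^d)}\leq C(\Vert Du_\epsi\Vert_{L^{\bar\gamma}(\Tt^d)}+1)$.

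The main (and essentially the only genuinely new) step is producing the analogue of \eqref{eq:Duepsiabove}. From \eqref{eq:congest.HJleq0} and \eqref{eq:assH.lower.congestion}, I would extract the pointwise inequality
\[
\frac{|Du_\epsi|^{\alpha(1+\tau/\beta)}}{m_\epsi^\tau+1}\;\leq\;C\bigl(m_\epsi^{\beta}+|u_\epsi|+|V(x)|+1\bigr),
\]
and raise both sides to the power $\tfrac{\beta+1}{\beta+\tau}=\bar\gamma/[\alpha(1+\tau/\beta)]$. After using $(a+b)^q\leq C(a^q+b^q)$ and expanding the product, the only nontrivial terms have the shape $m_\epsi^{\tau(\beta+1)/(\beta+\tau)}\cdot|w|^{(\beta+1)/(\beta+\tau)}$ with $w\in\{u_\epsi,V\}$. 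Young's inequality with conjugate exponents $(\beta+\tau)/\tau$ and $(\beta+\tau)/\beta$ is tailor-made for these products, and the identities $\tau(\beta+1)/(\beta+\tau)\cdot(\beta+\tau)/\tau=\bar\beta$ and $(\beta+1)/(\beta+\tau)\cdot(\beta+\tau)/\beta=\bar\beta'$ make everything close, delivering the pointwise bound
\[
|Du_\epsi|^{\bar\gamma}\;\leq\;C\bigl(m_\epsi^{\bar\beta}+|u_\epsi|^{\bar\beta'}+|V(x)|^{\bar\beta'}+1\bigr),
\]
which is identical in form to \eqref{eq:Duepsiabove}.

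From here the argument closes exactly as in Theorem~\ref{thm:apriori-simple.power.growth}: I integrate, insert $\Vert m_\epsi\Vert_{L^{\bar\beta}(\Tt^d)}\leq C$, $V\in L^{\bar\beta'}(\Tt^d)$, and the Poincaré--Wirtinger bound to obtain $\Vert Du_\epsi\Vert_{L^{\bar\gamma}(\Tt^d)}^{\bar\gamma}\leq C(\Vert Du_\epsi\Vert_{L^{\bar\gamma}(\Tt^d)}^{\bar\beta'}+1)$, and absorb using $\bar\gamma>\bar\beta'$ (which holds since $\alpha>1$). The conjunction of $\Vert m_\epsi\Vert_{L^{\bar\beta}}\leq C$, $\Vert Du_\epsi\Vert_{L^{\bar\gamma}}\leq C$, and the Poincaré--Wirtinger bound then gives the desired estimate. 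The main obstacle is purely a matter of exponent bookkeeping at the Young step: an incorrect pairing would either leave a power of $m_\epsi$ greater than $\bar\beta$ (uncontrollable a priori) or fail to exhaust the gradient on the left-hand side; the specific choice of conjugate exponents above is dictated by the requirement that every resulting $m_\epsi$-power collapse exactly to $\bar\beta$.
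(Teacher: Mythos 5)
Your proof is correct and follows essentially the same route as the paper: test \eqref{eq:congest.transport} with $u_\epsi$, combine with \eqref{eq:congest.HJleq0}--\eqref{eq:congest.HJ0} (supplied by Lemma~\ref{lem:weak.is.strong} in place of Lemma~\ref{lem:strong.is.strong}), bound $\Vert m_\epsi\Vert_{L^{\bar\beta}}$ and $\int u_\epsi$, and close via the pointwise gradient bound \eqref{eq:Duepsiabove} and Poincar\'e--Wirtinger. The only difference is cosmetic: where you re-derive that gradient bound from \eqref{eq:assH.lower.congestion} by a Young inequality with conjugate exponents $(\beta+\tau)/\tau$ and $(\beta+\tau)/\beta$ (your exponent bookkeeping checks out), the paper simply invokes Remark~\ref{rem:hierarchy} --- which rests on the same Young-type inequality --- to conclude that \eqref{eq:assH.lower.simple} still holds and then repeats the proof of Theorem~\ref{thm:apriori-simple.power.growth} verbatim.
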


\begin{proof}
We follow the proof of Theorem~\ref{thm:apriori-simple.power.growth} line by line, only replacing the use of Lemma~\ref{lem:strong.is.strong} by Lemma~\ref{lem:weak.is.strong} and noting that the use of \eqref{eq:assH.lower.simple} is still valid since it is implied by \eqref{eq:assH.lower.congestion} as explained in Remark~\ref{rem:hierarchy}.
\end{proof}

\begin{proof}[Proof of Theorem~\ref{thm:main-congestion.growth}]
We proceed as in the proof of Theorem~\ref{thm:main-simple.power.growth}.
By Proposition~\ref{thm:weakAepsi.congestion}, we can  find a sequence  \((m_{\epsi_k}, u_{\epsi_k})_k \subset L^{\bar\beta}(\Tt^d;\RR^+_0) \times W^{1,\bar\gamma}(\Tt^d)\), with \(\epsi_k\to0^+\),
satisfying
\begin{equation}\label{eq:existweaksol.congestion}
    0 \leq \langle A_{\epsi_k}  [\mu,
\upsilon] , (\mu, \upsilon)-(m_{\epsi_k},
u_{\epsi_k})\rangle
\end{equation}
for every \((\mu, \upsilon)\in \mathcal{K} \) and \(k\in\Nn\). Then, by Theorem~\ref{thm:apriori-congestion.growth}, we can find \((m,u)\in L^{\bar\beta}(\Tt^d;\RR^+_0)\times W^{1,\bar\gamma}(\Tt^d)\) such that, up to a subsequence that we do not relabel,
\begin{equation}
\label{eq:compactness.congestion}
\begin{aligned}
& (m_{\epsi_k}, u_{\epsi_k}) \rightharpoonup (m,u)\quad \text{weakly in } L^{\bar\beta}(\Tt^d;\RR^+_0)\times W^{1,\bar\gamma}(\Tt^d).
\end{aligned}
\end{equation}
Letting \(k\to\infty\) in \eqref{eq:existweaksol.congestion}, we deduce from \eqref{eq:compactness.congestion} combined with H\"older's inequality that
\begin{equation*}
0 \leq \langle A_0[\mu,
\upsilon], (\mu, \upsilon)-(m,u) \rangle 
\end{equation*}
for every \((\mu, \upsilon)\in \mathcal{K} \). Finally, we conclude the proof by Lemma~\ref{lem:weak.is.strong} with $\epsi = 0$.
\end{proof}

\section{Proof of Theorem~\ref{thm:main-weak.power.growth}} 
\label{sec:proof-weak-growth}

In this section, we establish the last of our results: the existence of weak solutions for mean-field games under the minimal growth conditions of Assumption~\ref{onH.weakgrowth}, as stated in Theorem~\ref{thm:main-weak.power.growth}. The core challenge in this setting is that the Hamiltonian lacks the growth properties required to ensure the MFG operator is well-defined. Our approach  regularizes the Hamiltonian itself through infimal convolution. This technique creates a new, regularized Hamiltonian, $H^\epsi$, that satisfies the necessary growth bounds while preserving the essential monotonicity structure and uniform lower bounds of the original problem. The proof proceeds in several stages: first, we define $H^\epsi$ via infimal convolution and prove its key analytical properties. 
Second, using this regularized Hamiltonian, we solve the associated variational inequality to obtain an approximate solution $(m_\epsi, u_\epsi)$. Third, we derive uniform a priori estimates for these solutions, independent of the regularization parameter $\epsi$. In the final stage, we pass to the limit as $\epsi \to 0$ to recover a weak solution to the original mean-field game.

Suppose that $H$ satisfies Assumption~\ref{onH.monotone} and Assumption~\ref{onH.weakgrowth}.  Recall that Assumption~\ref{onH.weakgrowth} provides a coercive lower bound ($H(x,p,m)\geq \frac 1 C |p|^\alpha - C(m^\beta+1)$) and a control on the negative part of the Hamiltonian ($H(x,0,m) \leq -\frac{1}{C}m^\beta+C$), but critically, it imposes no quantitative upper bound on $H$ with respect to the momentum $p$. This lack of an upper bound is the primary motivation for the infimal convolution.

The idea behind the infimal convolution is clear in the Lagrangian picture: indeed, the lower bound \eqref{eq:assH.lower.weak} translates to an upper bound on the Lagrangian $L$, while the lack of an effective upper bound on $H$ in Assumption~\ref{onH.weakgrowth} means the lack of such a lower bound on $L$. Then, infimal convolution in $H$ is equivalent to the addition to $L$ of a power-growth term of scale $\epsi$, to introduce a lower bound of the desired type.
To implement this regularization, we consider the strictly convex function $K\colon \RR^d\to\RR$ defined as
\[K(q) := |q| + |q|^\alpha.\]
For any $\epsi>0$, we define a regularized Hamiltonian as
\begin{equation}\label{eq:defHdelta}
    H^\epsi(x,p,m):= \inf_q \bigg\{H(x,p-q,m)+\frac{1}{\epsi}K(q)\bigg\}
\end{equation}
for a.e.~$x\in\Tt^d$; that is, $H^\epsi$ is the infimal convolution of $H$ with $(1/\epsi)K$ at fixed $x$ and $m$.
 We note that for each fixed $x \in \mathbb{T}^{d}$ and $m \in \mathbb{R}^{+}$, \eqref{eq:assH.lower.weak} gives for sufficiently large $q$ that
\[H(x,p-q,m) + \frac{1}{\epsi}K(q) \geq \frac{1}{C}|p-q|^{\alpha} - C(m^\beta + 1) \geq H(x,0,m) + \frac{1}{\epsi}K(p),\]
which implies that the infimum in \eqref{eq:defHdelta} is a minimum.
Moreover, because $H(x,\cdot,m)$ is convex in $p$ (Remark~\ref{rmk:cxty})  and $K$ is strictly convex, this minimum is attained at a unique point, which we denote by $q^\varepsilon(x,p,m)$.

\begin{lemma}\label{lem:inf-conv-props}
Under the assumptions of Theorem~\ref{thm:main-weak.power.growth}, let  $H^\epsi$ be the regularized Hamiltonian  in \eqref{eq:defHdelta}, and let $q^\epsi$ be the corresponding unique minimizer. For a.e.~$x \in \Tt^d$, the following properties hold:
    \begin{enumerate}[label=(\roman*)]
        \item The map $(p,m)\mapsto q^\varepsilon(x,p,m)$ is continuous.
        \item The function $q^\epsi$ admits a semi-explicit formula that ensures its measurability, and consequently the measurability of $H^\epsi$.
        \item The regularized Hamiltonian $H^\epsi$ satisfies the Caratheodory-type conditions of (\ref{a:regularityH}).
        %$H^\epsi(x,\cdot,m)$ is differentiable in $p$.
    \end{enumerate}
\end{lemma}
\begin{proof}
    (i) We argue by contradiction. Let $F(q,p,m) := H(x,p-q,m) + \frac{1}{\varepsilon}K(q)$. By assumption, $F$ is continuous in $(q,p,m)$. Let $(p_k,m_k) \to (p_0, m_0)$ as $k\to\infty$. Let $q_k := q^\varepsilon(x, p_k, m_k)$ and $q_0 := q^\varepsilon(x, p_0, m_0)$. Assume for contradiction that $q_k$ does not converge to $q_0$. This means there exists an $r>0$ and a subsequence (which we still denote by $q_k$) such that $|q_k - q_0| \geq r$ for all $k$.

    By definition, $q_k$ is the minimizer for $(p_k, m_k)$, so we have
    \begin{equation}\label{eq:minimizer_ineq}
        F(q_k, p_k, m_k) \leq F(q_0, p_k, m_k).
    \end{equation}
    The term on the right-hand side converges to $F(q_0, p_0, m_0)$ as $k\to\infty$. This implies the sequence $F(q_k, p_k, m_k)$ is bounded above. Since $F$ is coercive with respect to $q$ (i.e., $F \to \infty$ as $|q|\to\infty$), the sequence $\{q_k\}$ must be bounded.

    Since $\{q_k\}$ is bounded, there exists a further subsequence (again denoted by $q_k$) that converges to some point $q^*$. By our assumption, we must have $|q^* - q_0| \geq r$.
    Now, taking the limit of the inequality \eqref{eq:minimizer_ineq} along this subsequence, the continuity of $F$ gives:
    \[
    \lim_{k\to\infty} F(q_k, p_k, m_k) = F(q^*, p_0, m_0) \quad \text{and} \quad \lim_{k\to\infty} F(q_0, p_k, m_k) = F(q_0, p_0, m_0).
    \]
    This implies $F(q^*, p_0, m_0) \leq F(q_0, p_0, m_0)$. However, $q_0$ is the \emph{unique} minimizer of $F(\cdot, p_0, m_0)$. This forces $q^*=q_0$. This contradicts our finding that $|q^* - q_0| \geq r > 0$. Therefore, our initial assumption was false, and we must have $q_k \to q_0$. This proves continuity.
    
    \medskip
    
    (ii) Let us first denote the subdifferentials of the convex functions $K$ and $H^\epsi(x,\cdot,m)$ by $\delta K$ and $\delta_pH^\epsi(x,\cdot,m)$, respectively. Denoting $q^* := q^\epsi(x,p,m)$ for brevity, the minimization in \eqref{eq:defHdelta} gives the inclusions
    \begin{equation}\label{eq:inf-conv.diff-relation1}
        D_pH(x,p-q^*,m)\in \frac{1}{\epsi}\delta K(q^*), \qquad D_pH(x,p-q^*,m)\in \delta_p H^\epsi(x,p,m).
    \end{equation}
    
    Next, we investigate the first inclusion in \eqref{eq:inf-conv.diff-relation1} to derive a semi-explicit formula for the function $q^\epsi$. First, we re-express the inclusion as an equality in terms of the Legendre transform $\widehat K$ of $K$; that is,
    \[\widehat K(v) := \sup_q\{qv-K(q)\}.\]
    The strict convexity of $K$ ensures that $\widehat K$ is $C^1$; then, the first inclusion in \eqref{eq:inf-conv.diff-relation1} is equivalent to the equality
    \begin{equation}\label{eq:inf-conv.diff-relation2}
    q^* = D\widehat K(\epsi D_pH(x,p-q^*,m)).
    \end{equation}
    Next, a unique value of $q^*$ satisfies the first inclusion in \eqref{eq:inf-conv.diff-relation1}; hence, the map
    \[\mathbf{M}^\epsi_{x,m}\colon s\mapsto s + D\widehat K(\epsi D_pH(x,s,m))\]
    can be inverted to formally solve \eqref{eq:inf-conv.diff-relation2} as
    \begin{equation}\label{eq:q-star-semi-explicit}
        q^\epsi(x,p,m) = p - (\mathbf{M}^\epsi_{x,m})^{-1}(p).
    \end{equation}
    This formula justifies the measurability of $q^\epsi$  because the map $(x,s,m)\mapsto (x,\mathbf{M}^\epsi_{x,m}(s),m)$ is Borel measurable outside a null set of $x\in\Tt^d$. Consequently, 
\begin{equation}\label{eq:H-epsilon-formula}
    H^\epsi(x,p,m) = H(x,p-q^\epsi(x,p,m),m) + \frac{1}{\epsi}K(q^\epsi(x,p,m))
\end{equation}
is a measurable function which is continuous in $(p,m)$ as is $H$.

\medskip

    (iii) The second inclusion in \eqref{eq:inf-conv.diff-relation1} must collapse to an equality since the left-hand side is continuous in~$p$; that is,
    \begin{equation}\label{eq:DpH-epsilon-formula}
        D_pH^\epsi(x,p,m) = D_pH(x,p-q^\epsi(x,p,m),m).
    \end{equation}
    Note that this is an instance of the regularizing effect of infimal convolutions. As above, this shows that the map $(p,m)\mapsto D_pH^\epsi(x,p,m)$ is continuous; hence, $H^\epsi$ fully satisfies the Caratheodory-type conditions of (\ref{a:regularityH}).
\end{proof}

We now prove that $H^\epsi$ satisfies the monotonicity condition in Assumption~\ref{onH.monotone}.
\begin{lemma}[Monotonicity of $H^\epsi$]\label{lem:H.delta.monotone}
        Suppose that $H\colon \Tt^d\times \RR^d \times \RR^+ \to \RR$ satisfies Assumption~\ref{onH.monotone} and Assumption~\ref{onH.weakgrowth}. For $\epsi > 0$, let $H^\epsi\colon \Tt^d\times \RR^d \times \RR^+ \to \RR$ be defined as in \eqref{eq:defHdelta}. Then, the following monotonicity condition holds for a.e.~$x\in\Tt^d$ and for all $(p_1,m_1)$, $(p_2,m_2) \in\RR^d\times\RR^+$:
        \begin{equation}
    \label{h-epsilon-mon}
                \begin{aligned}
                        &\big(- H^\epsi(x,p_1,m_1) +  H^\epsi(x,p_2,m_2)\big) (m_1 - m_2)\\ &\quad + 
                        \big( m_1 D_p H^\epsi(x,p_1,m_1) - m_2 D_p H^\epsi(x,p_2,m_2)\big) \cdot (p_1-p_2) \geq 0.
                \end{aligned}
        \end{equation}
\end{lemma}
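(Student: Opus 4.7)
The plan is to write the claim as a sum of two non-negative terms: one coming from the monotonicity of $H$ (Assumption~\ref{onH.monotone}) applied at shifted momenta, and one coming from the convexity of $K$ together with the optimality condition for the infimal convolution. Setting $q_i := q^\epsi(x,p_i,m_i)$ and $v_i := D_pH^\epsi(x,p_i,m_i) = D_pH(x,p_i-q_i,m_i)$ for $i=1,2$, and using \eqref{eq:H-epsilon-formula}--\eqref{eq:DpH-epsilon-formula}, I would expand the left-hand side of \eqref{h-epsilon-mon}. The essential algebraic trick is the decomposition $p_1-p_2 = (p_1-q_1)-(p_2-q_2) + (q_1-q_2)$, which splits the scalar-product term into two pieces and rewrites that left-hand side as
\begin{align*}
\text{LHS}
&= \bigl[-H(x,p_1-q_1,m_1)+H(x,p_2-q_2,m_2)\bigr](m_1-m_2) \\
&\quad + (m_1v_1 - m_2v_2)\cdot\bigl((p_1-q_1)-(p_2-q_2)\bigr) + R,
\end{align*}
where
$$R := \tfrac{1}{\epsi}\bigl(K(q_2)-K(q_1)\bigr)(m_1-m_2) + (m_1v_1-m_2v_2)\cdot(q_1-q_2).$$
The first two terms are exactly the monotonicity expression for $H$ evaluated at $(p_1-q_1,m_1)$ and $(p_2-q_2,m_2)$, and are therefore $\geq 0$ by Assumption~\ref{onH.monotone}.

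It then remains to show that the residual $R$ is non-negative. Regrouping its terms by $m_i$ yields
$$R = m_1\Bigl[v_1\cdot(q_1-q_2)+\tfrac{1}{\epsi}\bigl(K(q_2)-K(q_1)\bigr)\Bigr] + m_2\Bigl[v_2\cdot(q_2-q_1)+\tfrac{1}{\epsi}\bigl(K(q_1)-K(q_2)\bigr)\Bigr].$$
The first inclusion in \eqref{eq:inf-conv.diff-relation1} gives $\epsi v_i \in \delta K(q_i)$, so the convexity of $K$ provides the two subgradient inequalities $K(q_2)\geq K(q_1)+\epsi v_1\cdot(q_2-q_1)$ and $K(q_1)\geq K(q_2)+\epsi v_2\cdot(q_1-q_2)$. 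Dividing by $\epsi$ and rearranging shows that each bracket is non-negative; since $m_1,m_2>0$, this gives $R\geq 0$, and \eqref{h-epsilon-mon} follows.

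I do not expect a substantive obstacle: the argument is purely algebraic once the Fenchel/subgradient structure of the minimizer $q^\epsi$ is exploited. The main point of care is bookkeeping --- in particular, the correct scaling $\epsi v_i \in \delta K(q_i)$ (rather than $v_i \in \delta K(q_i)$) must be used, and the splitting of $p_1-p_2$ has to be arranged so that precisely the shifted momenta $p_i-q_i$ appear inside the monotonicity expression for $H$.
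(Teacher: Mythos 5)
Your proposal is correct and follows essentially the same route as the paper: plug in \eqref{eq:H-epsilon-formula}--\eqref{eq:DpH-epsilon-formula}, split $p_1-p_2=(p_1-q_1)-(p_2-q_2)+(q_1-q_2)$ so that Assumption~\ref{onH.monotone} applies at the shifted momenta, and control the remainder, grouped by $m_1$ and $m_2$, via the convexity of $K$ and the subgradient inclusion from \eqref{eq:inf-conv.diff-relation1}. The only cosmetic difference is that the paper writes the two residual brackets directly inside one displayed inequality rather than isolating them as a separate term $R$.
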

\begin{proof}
    %The discussion preceding the lemma establishes the measurability and the continuity statements, thus it is left to show \eqref{h-epsilon-mon}.
    Plugging in the formulae \eqref{eq:H-epsilon-formula} and \eqref{eq:DpH-epsilon-formula} in \eqref{h-epsilon-mon}, and denoting
    \[q_1 = q^\epsi(x,p_1,m_1), \qquad q_2 = q^\epsi(x,p_2,m_2),\]
    we see that we must show that
    \begin{equation*}
                \begin{aligned}
                        &\big(-H(x,p_1-q_1,m_1) +  H(x,p_2-q_2,m_2)\big) (m_1 - m_2)\\ &\quad + 
                        \big( m_1 D_p H(x,p_1-q_1,m_1) - m_2 D_p H(x,p_2-q_2,m_2)\big) \cdot \big((p_1-q_1)-(p_2-q_2)\big) \\
            &\quad+ m_1\Big(-D_p H(x,p_1-q_1,m_1)\cdot (q_2-q_1) - \frac{1}{\epsi}K(q_1)+\frac{1}{\epsi}K(q_2)\Big) \\
            &\quad+ m_2\Big(-D_p H(x,p_2-q_2,m_2)\cdot(q_1-q_2) - \frac{1}{\epsi}K(q_2)+\frac{1}{\epsi}K(q_1)\Big) \geq 0.
                \end{aligned}
        \end{equation*}
    The sum in the first two lines of the last expression is non-negative by Assumption~\ref{onH.monotone}, while the last two lines are each non-negative because of the convexity of $K$ and the first inclusion in~\eqref{eq:inf-conv.diff-relation1}.
\end{proof}

Next, we derive some growth estimates on $H^\epsi$ and $D_pH^\epsi$ that are useful in the sequel.
\begin{proposition}[Estimates for $H^\epsi$]\label{prop:boundsHepsilon}
    Suppose that $H$ satisfies Assumption~\ref{onH.monotone} and Assumption~\ref{onH.weakgrowth}. For $0<\epsi\leq 1$, let $H^\epsi(x,p,m)$ be defined as in \eqref{eq:defHdelta}. Then, there exist constants $C$ and $C_\epsi$ (the former being independent of $\epsi$), such that
    \begin{align}
        & H^\epsi(x,p,m)\leq C_{\epsi}|p|^\alpha-\frac{1}{C}m^\beta + C, \label{eq:H-epsilon.upper} \\
        & |D_pH^\epsi(x,p,m)| \leq C_\epsi\Big(1+|p|^{\alpha-1} + m^{\beta-\frac{\beta}{\alpha}}\Big),\label{eq:DpHdelta.upper} \\
        & H^\epsi(x,p,m) \geq \frac{1}{C}|p|^\alpha - C(m^\beta+1),\label{eq:H-epsilon.lower.uniform} \\
        & D_pH^\epsi(x,p,m)\cdot p - H^\epsi(x,p,m) \geq \frac{1}{C} m^{\beta} - C. \label{eq:DpHdelta.dotp.minusH}
        \end{align}
\end{proposition}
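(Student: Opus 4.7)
The plan is to prove the four estimates sequentially, exploiting the explicit formulas
\[H^\epsi(x,p,m) = H(x,p-q^*,m) + \tfrac{1}{\epsi}K(q^*), \qquad D_pH^\epsi(x,p,m) = D_pH(x,p-q^*,m),\]
where $q^* := q^\epsi(x,p,m)$, together with the fact that $q^*$ satisfies the subdifferential inclusion in \eqref{eq:inf-conv.diff-relation1}.

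For the upper bound \eqref{eq:H-epsilon.upper}, I would insert the trial point $q=p$ into the infimal-convolution \eqref{eq:defHdelta}, obtaining $H^\epsi(x,p,m) \leq H(x,0,m) + \tfrac{1}{\epsi}K(p)$. Combining \eqref{eq:assH.upper.weak} with the elementary bound $K(p) = |p|+|p|^\alpha \leq 2|p|^\alpha + 1$ and absorbing factors of $1/\epsi$ into $C_\epsi$ then yields the claim. For the lower bound \eqref{eq:H-epsilon.lower.uniform}, I would use the identity for $H^\epsi$ together with \eqref{eq:assH.lower.weak} to write
\[H^\epsi(x,p,m) \;\geq\; \tfrac{1}{C}|p-q^*|^\alpha - C(m^\beta+1) + \tfrac{1}{\epsi}K(q^*),\]
and then exploit $\epsi \leq 1$ to deduce $\tfrac{1}{\epsi}K(q^*) \geq |q^*|^\alpha$. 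The convexity inequality $|p|^\alpha \leq 2^{\alpha-1}(|p-q^*|^\alpha + |q^*|^\alpha)$ delivers the desired $\frac{1}{C}|p|^\alpha$ after redefining $C$, crucially with a constant independent of $\epsi$.

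The main obstacle is the gradient estimate \eqref{eq:DpHdelta.upper}, because Assumption~\ref{onH.weakgrowth} itself provides no quantitative upper bound on $|D_pH|$. The key is the first inclusion in \eqref{eq:inf-conv.diff-relation1}, which, since $K(q) = |q|+|q|^\alpha$ has subdifferential bounded in norm by $1+\alpha|q|^{\alpha-1}$, yields
\[|D_pH^\epsi(x,p,m)| = |D_pH(x,p-q^*,m)| \;\leq\; \tfrac{1}{\epsi}\bigl(1+\alpha|q^*|^{\alpha-1}\bigr).\]
Thus the task reduces to controlling $|q^*|$, which I would do by chaining the two bounds above: the definition of $q^*$ as the minimizer combined with \eqref{eq:assH.lower.weak} and \eqref{eq:assH.upper.weak} gives
\[\tfrac{1}{C}|p-q^*|^\alpha - C(m^\beta+1) + \tfrac{1}{\epsi}K(q^*) = H^\epsi(x,p,m) \leq -\tfrac{1}{C}m^\beta + C + \tfrac{1}{\epsi}K(p),\]
so that $K(q^*) \leq \epsi C(m^\beta+1) + K(p)$, and hence $|q^*| \leq C_\epsi(1 + |p| + m^{\beta/\alpha})$. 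Raising this to the power $\alpha-1$ and substituting back yields \eqref{eq:DpHdelta.upper}.

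Finally, \eqref{eq:DpHdelta.dotp.minusH} is an immediate application of Lemma~\ref{lem:A1+anyother} to $H^\epsi$, since $H^\epsi$ satisfies the monotonicity hypothesis by Lemma~\ref{lem:H.delta.monotone} and the trial point $q=0$ in \eqref{eq:defHdelta} together with \eqref{eq:assH.upper.weak} gives $H^\epsi(x,0,m) \leq H(x,0,m) \leq -\tfrac{1}{C}m^\beta + C$, which is exactly the hypothesis \eqref{eq:common} required by that lemma.
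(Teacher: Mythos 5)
Your proposal is correct and follows essentially the same route as the paper: the trial point $q=p$ for \eqref{eq:H-epsilon.upper}, the subdifferential inclusion \eqref{eq:inf-conv.diff-relation1} plus a sandwich bound on $K(q^*)$ (equivalently $|q^*|$) for \eqref{eq:DpHdelta.upper}, splitting $|p|^\alpha$ between $|q^*|$ and $|p-q^*|$ for the $\epsi$-uniform lower bound \eqref{eq:H-epsilon.lower.uniform}, and convexity of $H^\epsi(x,\cdot,m)$ via Lemma~\ref{lem:H.delta.monotone} together with $H^\epsi(x,0,m)\leq H(x,0,m)$ for \eqref{eq:DpHdelta.dotp.minusH}, exactly as in Lemma~\ref{lem:A1+anyother}. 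The only blemish is the ``$=$'' in your chained estimate for $K(q^*)$, which should be a ``$\leq$'' since the left-hand side is only a lower bound for $H^\epsi(x,p,m)$ by \eqref{eq:assH.lower.weak}; this does not affect the argument.
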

\begin{proof}
    Invoking \eqref{eq:defHdelta} directly with $q=p$, we get
    \[H^\epsi(x,p,m) \leq \frac{1}{\epsi}|p|^\alpha + \frac{1}{\epsi}|p| + H(x,0,m).\]
    Absorbing the middle term into the first term by Young's inequality and estimating the last term by \eqref{eq:assH.upper.weak}, we conclude \eqref{eq:H-epsilon.upper}.

       Next, recalling that $q^* = q^\epsi(x,p,m)$, we  use the conditions $K(q)\geq |q|^\alpha$ and $\epsi\leq 1$ in  formula \eqref{eq:H-epsilon-formula} and use \eqref{eq:assH.lower.weak} to estimate the other term to obtain
    \begin{equation}\label{H-inf-conv-lower}
        H^\epsi(x,p,m) \geq |q^*|^\alpha + \frac{1}{C}|p-q^*|^{\alpha} - C(m^\beta+1).
    \end{equation}
        This estimate is key to the conclusion of both \eqref{eq:DpHdelta.upper} and \eqref{eq:H-epsilon.lower.uniform}.
    
    First, combining \eqref{H-inf-conv-lower} with \eqref{eq:H-epsilon.upper} and rearranging, we find that
    \[|q^\epsi(x,p,m)|^\alpha \leq C_\epsi|p|^\alpha + Cm^\beta + C.\]
    Hence,
    \begin{equation}\label{eq:q-epsilon-estimate}
        |q^\epsi(x,p,m)|\leq C_\epsi|p| + Cm^{\frac{\beta}{\alpha}} + C.
    \end{equation}
    On the other hand, \eqref{eq:DpH-epsilon-formula} together with the first inclusion in \eqref{eq:inf-conv.diff-relation1} provides the estimate
    \[|D_pH^\epsi(x,p,m)| \leq \frac{1}{\epsi}(1+\alpha|q^\epsi(x,p,m)|^{\alpha-1}).\]
    Thus, using \eqref{eq:q-epsilon-estimate} in the last estimate, we conclude \eqref{eq:DpHdelta.upper}.

    Secondly, noting that $\max\{|q^*|,|p-q^*|\}\geq |p|/2$, we have 
    \[\max\left\{|q^*|^\alpha, \frac{1}{C}|p-q^*|^{\alpha}\right\} \geq \frac{1}{2^\alpha(C+1)}|p|^\alpha.\]%\min\left\{\frac{1}{2^\rho}|p|^\rho, \frac{1}{C2^\alpha}\left(\frac{|p|^\alpha}{m^\tau+1}\right)\right\}
    Then, plugging this observation into \eqref{H-inf-conv-lower}, we conclude \eqref{eq:H-epsilon.lower.uniform}.

    Finally, we prove \eqref{eq:DpHdelta.dotp.minusH} as in Lemma~\ref{lem:A1+anyother}. From Lemma~\ref{lem:H.delta.monotone}, we get that $H^\epsi(x,\cdot,m)$ is convex as in Remark~\ref{rmk:cxty}; hence, we have
    \[D_pH^\epsi(x,p,m)\cdot p - H^\epsi(x,p,m) \geq -H^\epsi(x,0,m).\]
    Combining this with \eqref{eq:H-epsilon.upper}, we conclude \eqref{eq:DpHdelta.dotp.minusH}.
\end{proof}

Note that Lemma~\ref{lem:H.delta.monotone} and Proposition~\ref{prop:boundsHepsilon} ensure the existence of the extended functions $H^\epsi\colon \Tt^d\times \RR^d \times \RR^+_0 \to \RR$ and $mD_pH^\epsi\colon \Tt^d\times \RR^d \times \RR^+_0 \to \RR^d$ as in (\ref{a:Hat0}) and (\ref{a:mDpHat0}). Moreover, $(p,m)\mapsto H^\epsi(x,p,m)$ is continuous on $\RR^d\times \RR^+_0$ as in the proof of Theorem~\ref{thm:main-simple.power.growth}.

Now, the bounds in Proposition~\ref{prop:boundsHepsilon} as well as the fact that $V\in L^{\bar\beta'}$, allow us to define, for any $0 < \epsi \leq 1$, a regularized operator
\[A_\epsi: L^{\bar\beta}(\Tt^d;\RR^+_0) \times W^{1,\bar\gamma}(\Tt^d) \to L^{\bar\beta'}(\Tt^d) \times W^{-1,\bar\gamma'}(\Tt^d)\]
that maps an element \((m,u) \in L^{\bar\beta}(\Tt^d;\RR^+_0) \times W^{1,\bar\gamma}(\Tt^d) \) to an element \(A_\epsi[m,u]\in L^{\bar\beta'}(\Tt^d) \times W^{-1,\bar\gamma'}(\Tt^d) \) defined for \((\eta, \nu)\in L^{\bar\beta}(\Tt^d) \times W^{1,\bar\gamma}(\Tt^d)\) by
\begin{equation}\label{defAepsi.delta}
        \begin{aligned}
                \langle A_\epsi[m,u], (\eta, \nu) \rangle :=& \int_{\Tt^d} \bigg( -u -H^\epsi(x,D u,m) + V(x) \bigg)\eta \,
dx \\
                &\enspace+\int_{\Tt^d}  \Big(m D_pH^\epsi(x, D u,m) \cdot D \nu  + (m-1)\nu \Big) \, dx \\
        &\quad +\int_{\Tt^d} \Big(\epsi|D u|^{\bar\gamma-2} D u \cdot D \nu 
                +\epsi |u|^{\bar\gamma-2}u\nu\Big) \, dx.
        \end{aligned}
\end{equation}
Below, we apply Theorem~\ref{thm:monotone.abstract-weak} to this operator as in the previous sections.

\begin{proposition}[Existence of a Regularized Solution]\label{thm:strongAepsi.delta}
Under the assumptions of Theorem~\ref{thm:main-weak.power.growth}, for $0 < \epsi\leq 1$, let $A_\epsi$ be the operator in \eqref{defAepsi.delta}. Then, there exists \(( m_\epsi, u_\epsi) \in L^{\bar\beta}(\Tt^d;\RR^+_0) \times W^{1,\bar\gamma}(\Tt^d)\) satisfying for all \(( \mu, \upsilon) \in L^{\bar\beta}(\Tt^d;\RR^+_0) \times W^{1,\bar\gamma}(\Tt^d)\) the condition
        \begin{equation*}
                \langle A_\epsi[m_\epsi,  u_\epsi],
                (\mu,  \upsilon) - (m_\epsi,  u_\epsi) \rangle_{L^{\bar\beta'}(\Tt^d;\RR^+_0) \times W^{-1,\bar\gamma'}(\Tt^d), L^{\bar\beta}(\Tt^d;\RR^+_0) \times W^{1,\bar\gamma}(\Tt^d)} \geq 0.
        \end{equation*}
\end{proposition}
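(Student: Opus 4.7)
The plan is to follow essentially the same template as the proof of Proposition~\ref{thm:strongAepsi} in the power-growth case, now with the regularized Hamiltonian $H^\epsi$ in place of $H$. I work on the convex set $\mathcal{K}=L^{\bar\beta}(\Tt^d;\RR^+_0)\times W^{1,\bar\gamma}(\Tt^d)$, verify that $A_\epsi$ is well-defined, monotone, coercive, and hemicontinuous, invoke Theorem~\ref{thm:monotone.abstract-weak} to obtain the weak variational inequality, and then upgrade to the strong form via Remark~\ref{rmk:hemicontinuous-upgrade}.

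First, the operator is well-defined on $\mathcal{K}$ with values in $L^{\bar\beta'}(\Tt^d)\times W^{-1,\bar\gamma'}(\Tt^d)$. Indeed, the upper bounds \eqref{eq:H-epsilon.upper} and \eqref{eq:DpHdelta.upper} of Proposition~\ref{prop:boundsHepsilon}, combined with the hypothesis $V\in L^{\bar\beta'}(\Tt^d)$ and Hölder's inequality (exactly as in the computation yielding \eqref{eq:boundsmDpH} in Lemma~\ref{lem:ADFGU}), guarantee the required integrability of the terms in \eqref{defAepsi.delta}. Monotonicity is immediate: Lemma~\ref{lem:H.delta.monotone} supplies the monotonicity of the $H^\epsi$-part of the operator, and the remaining $\epsi$-regularization is the G\^ateaux derivative of the convex functional $u\mapsto \tfrac{1}{\bar\gamma}\|u\|_{W^{1,\bar\gamma}}^{\bar\gamma}$, hence monotone.

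Second, I verify coercivity by repeating the calculation of Proposition~\ref{prop:coercAepsi} with the test pair $(\bar m,\bar u)=(1,0)\in\mathcal{K}$, substituting $H^\epsi$ for $H$. The key point is that the uniform lower bounds \eqref{eq:H-epsilon.lower.uniform} and \eqref{eq:DpHdelta.dotp.minusH} (whose constants are independent of $\epsi$) play exactly the same roles played by \eqref{eq:assH.lower.simple} and \eqref{eq:DpHdotpminusH-estimate1} in the power-growth case, while the values $H^\epsi(x,0,1)$ and $D_pH^\epsi(x,0,1)$ are bounded by a constant $C_\epsi$ via \eqref{eq:H-epsilon.upper} and \eqref{eq:DpHdelta.upper}. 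Young's inequality then yields
\begin{equation*}
\langle A_\epsi[m,u]-A_\epsi[1,0],(m-1,u)\rangle
\geq \tfrac{1}{C}\|m\|_{L^{\bar\beta}}^{\bar\beta}
+\epsi\|u\|_{W^{1,\bar\gamma}}^{\bar\gamma}-C_\epsi,
\end{equation*}
which establishes coercivity in the sense of Definition~\ref{def:prelim}\ref{defitem:coercive} (the possible $\epsi$-dependence of $C_\epsi$ is harmless since $\epsi>0$ is fixed here).

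Third, hemicontinuity is proved as in Lemma~\ref{lem:hemicont}: along a segment $(\mu_t,\upsilon_t)=(1-t)(\bar\mu,\bar\upsilon)+t(\mu,\upsilon)$ in $\mathcal{K}$, the integrands appearing in $\langle A_\epsi[\mu_t,\upsilon_t],(\eta,\nu)\rangle$ are pointwise continuous in $t$ (since $(p,m)\mapsto H^\epsi(x,p,m)$ and $(p,m)\mapsto D_pH^\epsi(x,p,m)$ are continuous, as noted after \eqref{eq:DpH-epsilon-formula}) and are dominated by a fixed integrable function using again \eqref{eq:H-epsilon.upper}, \eqref{eq:DpHdelta.upper}, and Hölder's inequality; dominated convergence concludes. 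Applying Theorem~\ref{thm:monotone.abstract-weak} yields $(m_\epsi,u_\epsi)\in \overline{\mathcal{K}}=\mathcal{K}$ (the set is already closed) satisfying the weak inequality $\langle A_\epsi[\mu,\upsilon],(\mu,\upsilon)-(m_\epsi,u_\epsi)\rangle\geq 0$ for every $(\mu,\upsilon)\in\mathcal{K}$; the hemicontinuity and convexity of $\mathcal{K}$ then permit the standard Minty-type argument of Remark~\ref{rmk:hemicontinuous-upgrade} to upgrade this to the strong variational inequality in the statement. The main technical care, which is the only non-routine point, lies in tracking which constants may depend on $\epsi$ and which may not; here all the $\epsi$-dependent bounds are harmless because we are proving existence at a fixed $\epsi>0$, with the uniform-in-$\epsi$ estimates deferred to the subsequent a priori analysis.
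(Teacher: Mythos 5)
Your proposal is correct and follows essentially the same route as the paper: monotonicity from Lemma~\ref{lem:H.delta.monotone}, coercivity by repeating the computation of Proposition~\ref{prop:coercAepsi} with \eqref{eq:DpHdelta.dotp.minusH} and \eqref{eq:H-epsilon.lower.uniform} in place of \eqref{eq:DpHdotpminusH-estimate1} and \eqref{eq:assH.lower.simple}, then Theorem~\ref{thm:monotone.abstract-weak} followed by the hemicontinuity upgrade of Remark~\ref{rmk:hemicontinuous-upgrade}. Your explicit remarks on well-definedness and on the harmless $\epsi$-dependence of the constants are consistent with (and slightly more detailed than) the paper's argument.
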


\begin{proof}
As in the proofs of Propositions~\ref{thm:strongAepsi} and~\ref{thm:weakAepsi.congestion}, we first see that $A_\epsi$ is a monotone operator, using Lemma~\ref{lem:H.delta.monotone} in place of Assumption~\ref{onH.monotone}. Then, we show the coercivity of $A_\epsi$ along the same lines as Proposition~\ref{prop:coercAepsi}. More precisely, we get the equality \eqref{eq:coercive-test.Aepsi} with $H$ replaced by $H^\epsi$, and we obtain the estimate in \eqref{eq:coercive.Aepsi} by using \eqref{eq:DpHdelta.dotp.minusH} in place of \eqref{eq:DpHdotpminusH-estimate1}, \eqref{eq:H-epsilon.lower.uniform} in place of \eqref{eq:assH.lower.simple}, and other bounds similarly. Consequently, Theorem~\ref{thm:monotone.abstract-weak} applies to $A_\epsi$ and there exists \(( m_\epsi,  u_\epsi) \in L^{\bar\beta}(\Tt^d;\RR^+_0) \times W^{1,\bar\gamma}(\Tt^d)\) which satisfies 
\begin{equation*}%\label{eq:Aepsi.delta.weak.ineq}
                \langle A_\epsi[\mu, \upsilon],
                (\mu,  \upsilon) - (m_\epsi,  u_\epsi) \rangle_{L^{\bar\beta'}(\Tt^d;\RR^+_0) \times W^{-1,\bar\gamma'}(\Tt^d), L^{\bar\beta}(\Tt^d;\RR^+_0) \times W^{1,\bar\gamma}(\Tt^d)} \geq 0
\end{equation*}
    for all \(( \mu,  \upsilon) \in L^{\bar\beta}(\Tt^d;\RR^+_0) \times W^{1,\bar\gamma}(\Tt^d)\). Finally, the hemicontinuity of $A_\epsi$ follows as in the proof of Proposition~\ref{thm:strongAepsi},
    thus we conclude by Remark~\ref{rmk:hemicontinuous-upgrade}.
\end{proof}

We now observe that $(m_\epsi, u_\epsi)$ provided by Proposition~\ref{thm:strongAepsi.delta} satisfies the strong solution property.

\begin{lemma}[Strong solution property]\label{lem:strong.is.strong.delta}
    Let $0<\epsi\leq 1$ and  $(m, u)\in L^{\bar\beta}(\Tt^d;\RR^+_0) \times W^{1,\bar\gamma}(\Tt^d)$ be such that
\begin{equation*}%\label{eq:Aepsi.delta.strong-ineq}
    \langle A_\epsi[m,u], (\mu-m, \upsilon-u) \rangle \geq 0
\end{equation*}
    for all $(\mu, \upsilon)\in L^{\bar\beta}(\Tt^d;\RR^+_0) \times W^{1,\bar\gamma}(\Tt^d)$.
    Then, we have
    \begin{align*}
            & u + H^\epsi(x,Du,m)- V(x) \leq 0 \qquad \text{a.e.~in~} \Tt^d,%\label{eq:Hdelta-leq0}
\\
            & u+ H^\epsi(x,Du,m)- V(x) = 0 \qquad \text{a.e.~in~} \{x\in\Tt^d\colon m>0\}, %\label{eq:Hdelta-0}
    \end{align*}
    and
    \begin{equation*}%\label{eq:delta-transport}
        \int_{\Tt^d} \big[ (m-1)\varphi + mD_pH^\epsi(x,Du,m)\cdot D\varphi + \epsi|Du|^{\bar\gamma-2}Du\cdot D\varphi + \epsi |u|^{\bar\gamma-2}u\varphi\big]\, dx = 0
    \end{equation*}
    for all $\varphi\in W^{1,\bar\gamma}(\Tt^d)$.
\end{lemma}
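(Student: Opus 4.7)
The plan is to follow the proof of Lemma~\ref{lem:strong.is.strong} essentially verbatim, since the operator $A_\epsi$ in \eqref{defAepsi.delta} has the same structure as the one in \eqref{defAepsi}, with $H$ merely replaced by the regularized Hamiltonian $H^\epsi$. The crucial point that permits this direct transfer is that, by the bounds \eqref{eq:H-epsilon.upper}, \eqref{eq:DpHdelta.upper}, \eqref{eq:H-epsilon.lower.uniform}, and the fact that $V\in L^{\bar\beta'}(\Tt^d)$, the operator $A_\epsi$ is well-defined on the entire space $L^{\bar\beta}(\Tt^d;\RR^+_0)\times W^{1,\bar\gamma}(\Tt^d)$; in particular, there is no need for a restricted convex domain as in the congestion case of Section~\ref{sec:proof-congestion}.

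Concretely, I would proceed by three successive judicious choices of test function $(\mu,\upsilon)\in L^{\bar\beta}(\Tt^d;\RR^+_0)\times W^{1,\bar\gamma}(\Tt^d)$ in the variational inequality. First, for any $\varphi\in C^\infty(\Tt^d;\RR^+_0)$, the pair $(\mu,\upsilon)=(m+\varphi,u)$ is admissible, and plugging it in yields the pointwise HJ inequality
\[
\int_{\Tt^d}\bigl(-u-H^\epsi(x,Du,m)+V(x)\bigr)\varphi\,dx\geq 0,
\]
from which the first assertion follows since $\varphi\geq 0$ is arbitrary. Second, for any $\varphi\in C^\infty(\Tt^d)$, the pair $(\mu,\upsilon)=\bigl(m(1\pm\varphi/\|\varphi\|_\infty),u\bigr)$ is admissible because $1\pm\varphi/\|\varphi\|_\infty\geq 0$; the resulting two inequalities collapse to the equality on $\{m>0\}$. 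Third, for $\varphi\in W^{1,\bar\gamma}(\Tt^d)$, the pair $(\mu,\upsilon)=(m,u\pm\varphi)$ is admissible and gives the regularized transport identity.

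Verifying that these test functions lie in the domain of $A_\epsi$ is immediate in each case: membership in $L^{\bar\beta}(\Tt^d;\RR^+_0)$ follows from $m\in L^{\bar\beta}(\Tt^d;\RR^+_0)$ together with the boundedness of $\varphi$ in the first two cases, while for the third case $u\pm\varphi\in W^{1,\bar\gamma}(\Tt^d)$ is trivial. All integrability of the integrands produced by these substitutions is controlled by the $p$-power growth bounds of Proposition~\ref{prop:boundsHepsilon}, exactly as in the analogous argument for Lemma~\ref{lem:strong.is.strong} based on Lemma~\ref{lem:ADFGU}.

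There is no serious obstacle, since each step mirrors the earlier argument with $H$ replaced by $H^\epsi$; the only point meriting a brief remark in the write-up is that the regularized Hamiltonian $H^\epsi$ enjoys the needed $(p,m)$-continuity (from \eqref{eq:H-epsilon-formula}--\eqref{eq:DpH-epsilon-formula} and the continuity of $q^\epsi$) and the power-type upper bounds from Proposition~\ref{prop:boundsHepsilon}, ensuring that all substitutions produce finite integrals and that no boundary phenomenon on $\{m=0\}$ obstructs the argument.
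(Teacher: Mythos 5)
Your proposal is correct and matches the paper's argument: the paper proves this lemma by repeating the proof of Lemma~\ref{lem:strong.is.strong} verbatim with $H$ replaced by $H^\epsi$, using the same three test-function choices $(m+\varphi,u)$, $\bigl(m(1\pm\varphi/\|\varphi\|_\infty),u\bigr)$, and $(m,u\pm\varphi)$, with well-definedness of $A_\epsi$ on the full space guaranteed by Proposition~\ref{prop:boundsHepsilon}. No differences worth noting.
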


\begin{proof}
The proof is the same as that of Lemma~\ref{lem:strong.is.strong} verbatim, with $H$ replaced by $H^\epsi$ everywhere.
\end{proof}

\begin{theorem}[Uniform estimates for \((m_\epsi,u_\epsi)\)]\label{thm:apriori-weak.power.growth}
        Under the assumptions of Theorem~\ref{thm:main-weak.power.growth}, let \(( m_\epsi,  u_\epsi) \in L^{\bar\beta}(\Tt^d;\RR^+_0) \times W^{1,\bar\gamma}(\Tt^d)\) be given by Proposition~\ref{thm:strongAepsi.delta}. Then,  
        \begin{equation*}
                %\label{eq:apriori-weak.power.growth}
                \begin{aligned}
                        & \Vert m_\epsi\Vert_{L^{\bar\beta}(\Tt^d)}^{\bar\beta} + \Vert u_\epsi \Vert_{W^{1,\bar\gamma}(\Tt^d)}^{\bar\gamma}  \leq C
                \end{aligned}
        \end{equation*}
    for a constant $C>0$ as in Remark~\ref{rmk:onC}; in particular, it is independent of $\epsi$.
\end{theorem}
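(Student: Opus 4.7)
The plan is to mirror the argument used for Theorem~\ref{thm:apriori-simple.power.growth} essentially verbatim, with $H$ replaced by the regularized Hamiltonian $H^\epsi$ throughout. The key observation is that Proposition~\ref{prop:boundsHepsilon} supplies $\epsi$-independent versions of precisely those two estimates that drove the $\epsi$-uniform bound in the power-growth case: the Lagrangian-type lower bound \eqref{eq:DpHdelta.dotp.minusH}, which plays the role of \eqref{eq:DpHdotpminusH-estimate1}, and the coercive lower bound \eqref{eq:H-epsilon.lower.uniform}, which plays the role of \eqref{eq:assH.lower.simple}. The upper bounds \eqref{eq:H-epsilon.upper}--\eqref{eq:DpHdelta.upper} are $\epsi$-dependent but play no role in what follows.

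First, I will invoke Lemma~\ref{lem:strong.is.strong.delta} to secure the strong solution property for $(m_\epsi,u_\epsi)$: the Hamilton--Jacobi inequality $u_\epsi + H^\epsi(x,Du_\epsi,m_\epsi) - V \leq 0$ a.e., with equality on $\{m_\epsi>0\}$, together with the regularized transport equation in weak form. Testing the latter with $\varphi=u_\epsi$ and subtracting the integrated HJ equality (multiplied by $m_\epsi$) gives
\[
0 = \int_{\Tt^d}\Big(-u_\epsi + m_\epsi\big(-H^\epsi + D_pH^\epsi\cdot Du_\epsi\big) + V m_\epsi + \epsi|Du_\epsi|^{\bar\gamma} + \epsi|u_\epsi|^{\bar\gamma}\Big)\,dx.
\]
Applying \eqref{eq:DpHdelta.dotp.minusH} to the middle term and Young's inequality to absorb $V m_\epsi$ yields the lower bound $\int u_\epsi\,dx \geq \tfrac{1}{C}\|m_\epsi\|_{L^{\bar\beta}}^{\bar\beta} - C$, the analogue of \eqref{eq:uepsibelow}.

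On the other hand, the HJ inequality combined with \eqref{eq:H-epsilon.lower.uniform} gives the pointwise bound $u_\epsi \leq |V| + C(m_\epsi^\beta+1)$, so after integration $\int u_\epsi\,dx \leq \|V\|_{L^1} + C\|m_\epsi\|_{L^{\bar\beta}}^{\bar\beta-1} + C$, the analogue of \eqref{eq:uepsiabove}. Combining the two bounds and absorbing the subcritical power through Young's inequality produces the uniform bound $\|m_\epsi\|_{L^{\bar\beta}} \leq C$, hence $|\int u_\epsi\,dx|\leq C$. Poincar\'e--Wirtinger on $\Tt^d$ then reduces control of $\|u_\epsi\|_{W^{1,\bar\gamma}}$ to control of $\|Du_\epsi\|_{L^{\bar\gamma}}$. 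To complete the proof, I will raise the pointwise inequality $|Du_\epsi|^\alpha \leq C(m_\epsi^{\bar\beta-1} + |u_\epsi| + |V| + 1)$, which stems directly from the HJ inequality and \eqref{eq:H-epsilon.lower.uniform}, to the power $\bar\beta'=\bar\gamma/\alpha$, integrate, exploit $\bar\gamma > \bar\beta'$, and close the estimate as in \eqref{eq:Duepsiabove}--\eqref{eq:Duepsi.bound}.

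No genuinely new obstacle arises: the very purpose of the infimal-convolution construction is to restore the two $\epsi$-uniform lower bounds in Proposition~\ref{prop:boundsHepsilon} that the weak growth hypothesis \ref{onH.weakgrowth} does not provide directly for $H$. Thus the proof is essentially a transcription of the argument for Theorem~\ref{thm:apriori-simple.power.growth}, with $H$ replaced by $H^\epsi$, Lemma~\ref{lem:strong.is.strong} replaced by Lemma~\ref{lem:strong.is.strong.delta}, \eqref{eq:DpHdotpminusH-estimate1} replaced by \eqref{eq:DpHdelta.dotp.minusH}, and \eqref{eq:assH.lower.simple} replaced by \eqref{eq:H-epsilon.lower.uniform}. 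The only point requiring a moment of care is verifying that every constant appearing in the chain of inequalities is traceable to the $\epsi$-independent constants of Proposition~\ref{prop:boundsHepsilon}, never to the $\epsi$-dependent $C_\epsi$ of its upper bounds.
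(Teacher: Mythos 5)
Your proposal is correct and follows essentially the same route as the paper, whose proof of this theorem is precisely to repeat the argument of Theorem~\ref{thm:apriori-simple.power.growth} with Lemma~\ref{lem:strong.is.strong} replaced by Lemma~\ref{lem:strong.is.strong.delta}, \eqref{eq:DpHdotpminusH-estimate1} by \eqref{eq:DpHdelta.dotp.minusH}, and \eqref{eq:assH.lower.simple} by \eqref{eq:H-epsilon.lower.uniform}. Your additional remark that only the $\epsi$-independent constants of Proposition~\ref{prop:boundsHepsilon} enter the chain of estimates (never $C_\epsi$) is exactly the point that makes the transcription legitimate.
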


\begin{proof} We follow the proof of Theorem~\ref{thm:apriori-simple.power.growth} line by line, only replacing Lemma~\ref{lem:strong.is.strong} by Lemma~\ref{lem:strong.is.strong.delta}, \eqref{eq:DpHdotpminusH-estimate1} by \eqref{eq:DpHdelta.dotp.minusH}, and \eqref{eq:assH.lower.simple} by \eqref{eq:H-epsilon.lower.uniform}.
\end{proof}

The crucial observation for passing to the limit is the following remark.

%%{\color{ForestGreen}
%%\begin{remark}
%%\label{observation}
%%We note that since $\alpha>1$, $\delta K(0)= B_1(0)$.
%%Fix $M>0$. Suppose that 
%%$x,p,m$ are such that
%%\[
%%|D_pH(x,p,m)|<M
%%\]
%%and let $q^\epsi$ be the minimizer on the definition of $H^\epsi(x,p,m)$.
%%Then, there is $\epsi_0$ depending only on $M$ such that for all 
%%$\epsi<\epsi_0$, the inclusion
%%\[
%%D_pH(x,p,m)\in \frac{1}{\epsi}\delta K(0)
%%\]
%%implies $q^\epsi=0$, by uniqueness of $q^\epsi$ and the inclusion
%%\[
%%D_pH(x,p,m)\in \frac{1}{\epsi}\delta K(q^\epsi).
%%\]
%%Accordingly,  $H^\epsi(x,p,m)=H(x,p,m)$ for all $\epsi<\epsi_0$.
%%\end{remark}}

\begin{remark}
\label{observation}
We note that since $\alpha>1$, $\delta K(0)= B_1(0)$.
Fix $M>0$, and  suppose that 
$x,p,m$ are such that
\[
|D_pH(x,p,m)|<M.
\]
Then, there is $\epsi_0$ depending only on $M$ such that for all 
$\epsi<\epsi_0$, the inclusion
\[
D_pH(x,p,m)\in \frac{1}{\epsi}\delta K(0)
\]
holds. Recall that $q^\epsi(x,p,m)$ is the minimizer on the definition of $H^\epsi(x,p,m)$ and it is the unique value satisfying the inclusion
\[
D_pH(x,p-q^\epsi(x,p,m),m)\in \frac{1}{\epsi}\delta K(q^\epsi(x,p,m))
\]
as given in~\eqref{eq:inf-conv.diff-relation1}. This implies that $q^\epsi(x,p,m) = 0$ and accordingly,  $H^\epsi(x,p,m) = H(x,p,m)$ and $D_pH^\epsi(x,p,m) = D_pH(x,p,m)$ for all $\epsi<\epsi_0$.
\end{remark}

\begin{proof}[Proof of Theorem~\ref{thm:main-weak.power.growth}]

We begin by recalling from Proposition~\ref{thm:strongAepsi.delta} that for any sequence $\epsi_k \to 0^+$, we have a corresponding sequence of solutions $(m_{\epsi_k}, u_{\epsi_k})_k \subset L^{\bar\beta}(\Tt^d;\RR^+_0) \times W^{1,\bar\gamma}(\Tt^d)$
satisfying
\begin{equation*}
%\label{eq:existweaksol.delta.regularized}
\begin{aligned}
0 &\leq \langle A_{\epsi_k}  [\mu,
\upsilon] , (\mu, \upsilon)-(m_{\epsi_k},
u_{\epsi_k})\rangle \\
&= \int_{\Tt^d} \Big(  -\upsilon- H^\epsi(x, D\upsilon,\mu) + V(x)  \Big)(\mu \, -m_{\epsi_k})\,dx
\\ &\quad+\int_{\Tt^d} \Big( \mu D_p H^\epsi(x, D\upsilon,\mu) \cdot(D\upsilon - Du_{\epsi_k})  + (\mu-1)( \upsilon-u_{\epsi_k}) \Big)
\, dx
\\&\quad+\epsi_k\int_{\Tt^d}
 \Big( |D\upsilon|^{\bar\gamma-2} D\upsilon \cdot D( \upsilon-u_{\epsi_k}) 
+|\upsilon|^{\bar\gamma-2}\upsilon( \upsilon-u_{\epsi_k})\Big)\,dx
\end{aligned}
\end{equation*}
for every \((\mu, \upsilon)\in L^{\bar\beta}(\Tt^d;\RR^+_0) \times W^{1,\bar\gamma}(\Tt^d) \) and \(k\in\Nn\). Fix \((\mu, \upsilon)\in L^{\infty}(\Tt^d) \times W^{1,\infty}(\Tt^d) \) with $\essinf\mu > 0$ as in Definition~\ref{def:weak}. By \eqref{eq:assH.DpH.upper.weak}, we get $D_pH(x,D\upsilon(x), \mu(x)) \in L^\infty(\Tt^d)$. Hence, by Remark \ref{observation}, for sufficiently small $\epsi>0$, we have
\[H^\epsi(x,D\upsilon,\mu) = H(x,D\upsilon,\mu), \qquad D_pH^\epsi(x,D\upsilon,\mu) = D_pH(x,D\upsilon,\mu).\]
Therefore, for every \((\mu, \upsilon)\in L^{\infty}(\Tt^d) \times W^{1,\infty}(\Tt^d) \) with $\essinf\mu > 0$, we have
\begin{equation}
\label{eq:existweaksol.delta}
\begin{aligned}
0 &\leq \int_{\Tt^d} \Big(  -\upsilon- H(x, D\upsilon,\mu) + V(x)  \Big)(\mu \, -m_{\epsi_k})\,dx
\\ &\quad+\int_{\Tt^d} \Big( \mu D_p H(x, D\upsilon,\mu) \cdot(D\upsilon - Du_{\epsi_k})  + (\mu-1)( \upsilon-u_{\epsi_k}) \Big)
\,dx
\\&\quad+\epsi_k\int_{\Tt^d}
 \Big( |D\upsilon|^{\bar\gamma-2} D\upsilon \cdot D( \upsilon-u_{\epsi_k}) +|\upsilon|^{\bar\gamma-2}\upsilon( \upsilon-u_{\epsi_k})\Big)\,dx
\end{aligned}
\end{equation}
for sufficiently large $k \in \Nn$.

On the other hand, by Theorem~\ref{thm:apriori-weak.power.growth}, we can find \((m,u)\in L^{\bar\beta}(\Tt^d;\RR^+_0)\times W^{1,\bar\gamma}(\Tt^d)\) such that, up to a subsequence that we do not relabel,
\begin{equation}
\label{eq:compactness.delta}
\begin{aligned}
& (m_{\epsi_k}, u_{\epsi_k}) \rightharpoonup (m,u)\quad \text{weakly in } L^{\bar\beta}(\Tt^d;\RR^+_0)\times W^{1,\bar\gamma}(\Tt^d).
\end{aligned}
\end{equation}
We conclude the proof by letting \(k\to\infty\) in \eqref{eq:existweaksol.delta} and using \eqref{eq:compactness.delta} combined with H\"older's inequality.
\end{proof}

\begin{remark}
    We note that, under the conditions of Theorem~\ref{thm:main-weak.power.growth}, we can only guarantee the existence of a weak solution because, unlike the power-growth and congestion cases, Assumption~\ref{onH.weakgrowth} does not provide a quantitative upper bound on $|D_pH(x,p,m)|$ in terms of $|p|$. Hence, we lack the necessary uniform integrability for the sequence $m_{\epsi_k} D_pH(x,Du_{\epsi_k},m_{\epsi_k})$. This prevents us from passing to the limit in the transport equation to obtain strong solutions (as was done via Lemma~\ref{lem:weak.is.strong}), forcing us to rely on the weak variational formulation with highly regular test functions.
\end{remark}

\bibliographystyle{abbrv}

\end{document}